\newtheorem{thm}{Theorem}
\newtheorem*{thm*}{Theorem}
\newtheorem*{conj*}{Conjecture}
\newtheorem{lemma}{Lemma}
\newtheorem*{lemma*}{Lemma}
\newtheorem{prop}{Proposition}
\newtheorem*{remarks*}{Remarks}
\title{
Effective Results in The Metric Theory of Quantitative Diophantine Approximation}
\author{Ying Wai Lee \& Andrew Scoones}
\date{\today}
\begin{document}

\maketitle
\begin{abstract}
    Many results related to quantitative problems in the metric theory of Diophantine approximation are asymptotic, such as the number of rational solutions to certain inequalities grows with the same rate almost everywhere modulo an asymptotic error term. 
    The error term incorporates an implicit constant that varies from one point to another. This means that applications of these results does not give concrete bounds when applied to, say a finite sum, or when applied to counting the number of solutions up to a finite point for a given inequality. This paper addresses this problem and makes the tools and their results effective, by making the implicit constant explicit outside of an exceptional subset of Lebesgue measure at most $\delta>0$, an arbitrarily small constant chosen in advance. We deduce from this the fully effective results for Schmidt's Theorem, quantitative Koukoulopoulos-Maynard Theorem and quantitative results on $M_{0}$-sets; we also provide effective results regarding statistics of normal numbers and strong law of large numbers.
\end{abstract}


\section{Introduction}





Roughly speaking, we can think of Diophantine approximation as a branch of number theory that studies quantitatively the density of the rationals within the reals. This area is one of the main topics in Metric Number Theory, where we are often interested in studying the size (in terms of, for example, the Lebesgue measure or Hausdorff dimension) of Diophantine sets satisfying certain properties. One problem here is, given an approximating function $\psi: \mathbb{N} \to [0,\, \infty)$, to determine the size of the set
\[W(\psi)\coloneqq\limsup_{q\to\infty}\left\{x \in [0,1)\,:\,\left|x-\frac{p}{q}\right|<\frac{\psi(q)}{q}\textrm{, for some }p\in\mathbb{N}\right\}.\]
We say that the elements of $W(\psi)$ are $\psi$-approximable; and note that $x \in W(\psi)$ if there exist infinitely many positive integers $(p,\,q)$ satisfying 
\begin{equation}\label{psi approximable}
\left|x-\frac{p}{q}\right|<\frac{\psi(q)}{q}.    
\end{equation}

It is clear that every $x \in \mathbb{R}$ is within $\frac{1}{2q}$ of a rational number with denominator $q$; that is, we can take $\psi(q)=\frac{1}{2}$ for all $q$ and it is clear that $\mu\left(W\left(\psi\right)\right)=1$, where $\mu$ denotes the Lebesgue measure on $\mathbb{R}$. It is natural to ask how far this can be improved.

We now consider $\psi(q)=1/q$; in this case, the theory of continued fractions tells us that every $x$ is $\frac{1}{q}$-approximable. In fact, this is the content of a theorem of Dirichlet.

\begin{thm*}[Dirichlet's Theorem, 1842]
For any $x \in \mathbb{R}$ and $N \in \mathbb{N}$, there exist $p,\,q \in \mathbb{Z}$ such that
\[\left|x-\frac{p}{q}\right|<\frac{1}{qN},\]
with $1 \leq q \leq N$.
\end{thm*}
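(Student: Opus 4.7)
The plan is to establish Dirichlet's Theorem by the classical pigeonhole argument. Given $x \in \mathbb{R}$ and $N \in \mathbb{N}$, I would consider the $N+1$ fractional parts $\{0\cdot x\},\, \{1\cdot x\},\, \ldots,\, \{N\cdot x\}$, all of which lie in the unit interval $[0,1)$.

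First, I would partition $[0,1)$ into the $N$ half-open subintervals $I_j = [j/N,\, (j+1)/N)$ for $j = 0, 1, \ldots, N-1$, each of length $1/N$. By pigeonhole, at least two of the $N+1$ fractional parts must land in the same $I_j$; call them $\{ix\}$ and $\{jx\}$ with $0 \leq i < j \leq N$. Using half-open intervals ensures the differences are strictly less than $1/N$, which will be needed for the strict inequality in the conclusion.

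Next, I would set $q = j - i$ and $p = \lfloor jx \rfloor - \lfloor ix \rfloor$, so that $1 \leq q \leq N$. Writing $\{jx\} - \{ix\} = (jx - ix) - (\lfloor jx \rfloor - \lfloor ix \rfloor) = qx - p$ and using that the two fractional parts lie in a common subinterval of length $1/N$, I obtain
\[
|qx - p| = |\{jx\} - \{ix\}| < \frac{1}{N}.
\]
Dividing by $q \geq 1$ yields the required bound $\left|x - p/q\right| < 1/(qN)$ with $1 \leq q \leq N$.

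The whole argument reduces to a single application of the pigeonhole principle, so there is no genuine obstacle; the only subtle point is the strictness of the inequality, which I would secure from the start by taking the intervals $I_j$ half-open on the right so that any two distinct fractional parts sharing an $I_j$ differ by strictly less than $1/N$.
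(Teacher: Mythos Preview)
Your pigeonhole argument is correct and is the standard proof of Dirichlet's Theorem. Note, however, that the paper does not actually supply a proof of this statement: Dirichlet's Theorem is quoted in the introduction purely as classical background, with no argument given, before the paper moves on to its corollary and to Hurwitz's and Khintchine's theorems. So there is nothing in the paper to compare against; your proposal simply fills in the omitted classical proof, and it does so correctly.
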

This has the following immediate corollary; we note the proof of the following can also be given via continued fractions, the theory of which is older than Dirichlet's Theorem.
\begin{thm*}
Let $x \in \mathbb{R}\setminus\mathbb{Q}$. Then there exist infinitely many $p,\,q$ such that $\gcd(p,\,q)=1$ and
\begin{equation}\label{Dirichlet}
\left| x-\frac{p}{q}\right| < \frac{1}{q^{2}}.    
\end{equation}
\end{thm*}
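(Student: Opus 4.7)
The plan is to deduce the statement directly from Dirichlet's Theorem, with irrationality of $x$ used to upgrade the conclusion from one approximation (for each $N$) to infinitely many distinct approximations.

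First, I would apply Dirichlet's Theorem for an arbitrary $N \in \mathbb{N}$ to obtain integers $p, q$ with $1 \le q \le N$ and $|x - p/q| < 1/(qN)$. Since $q \le N$, we have $1/(qN) \le 1/q^{2}$, so the inequality \eqref{Dirichlet} holds for this pair. To secure the coprimality condition, I would set $d = \gcd(p, q)$, $p' = p/d$, $q' = q/d$; then $|x - p'/q'| = |x - p/q| < 1/q^{2} \le 1/(q')^{2}$, so we may assume $\gcd(p, q) = 1$ from the outset.

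Second, I would argue that this process produces infinitely many distinct fractions $p/q$ satisfying \eqref{Dirichlet}. Suppose for contradiction that only finitely many coprime pairs $(p_{1}, q_{1}), \ldots, (p_{k}, q_{k})$ satisfy the inequality. Because $x \notin \mathbb{Q}$, each quantity $|x - p_{i}/q_{i}|$ is strictly positive, so I may choose $N \in \mathbb{N}$ with
\[
\frac{1}{N} < \min_{1 \le i \le k} \left| x - \frac{p_{i}}{q_{i}} \right|.
\]
Applying Dirichlet's Theorem with this $N$ yields a coprime pair $(p, q)$ with $1 \le q \le N$ and
\[
\left| x - \frac{p}{q} \right| < \frac{1}{qN} \le \frac{1}{N} < \left| x - \frac{p_{i}}{q_{i}} \right|
\]
for every $i$, so $(p, q)$ is distinct from each $(p_{i}, q_{i})$. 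This contradicts the assumption that the list was exhaustive, and hence there must be infinitely many such pairs.

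There is no serious obstacle; the only subtlety is ensuring that distinct values of $N$ actually produce distinct fractions, which is precisely where irrationality enters (if $x$ were rational, the supremum of $\min_{i} |x - p_{i}/q_{i}|$ could collapse to zero, and Dirichlet would keep returning the same fraction $x = p/q$). The coprimality reduction is routine and preserves the inequality \eqref{Dirichlet} because $q' \le q$.
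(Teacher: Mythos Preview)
Your proof is correct and is precisely the standard deduction the paper has in mind when it calls the statement an ``immediate corollary'' of Dirichlet's Theorem; the paper does not spell out the details but simply notes this and remarks that continued fractions give an alternative route. Your handling of the coprimality reduction and the use of irrationality to force infinitely many distinct approximations are both sound.
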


We note that this theorem is true for all $x \in \mathbb{R}$ if we remove the condition that $\gcd(p,\,q)=1$. Further, to connect this with our discussion above, we note we have shown that $\mu\left(W\left(\frac{1}{q}\right)\right)=1$; we recall in our definition of $W(\psi)$ we do not insist that $\gcd\left(p,\,q\right)=1$.

Now it is natural to ask how much we can improve inequality \eqref{Dirichlet}, and a theorem by Hurwitz shows that we can only improve this inequality for all $x \in \mathbb{R}$ so far.

\begin{thm*}[Hurwitz's Theorem, 1891]
Let $x \in \mathbb{R}\setminus\mathbb{Q}$. Then there exist infinitely many $p,\,q$ such that $\gcd(p,\,q)=1$ and
\begin{equation}\
\left|x-\frac{p}{q}\right| < \frac{1}{\sqrt{5}q^{2}}. 
\end{equation}
\end{thm*}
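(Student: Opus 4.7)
The plan is to invoke the theory of continued fractions and prove the slightly stronger statement that, writing $p_n/q_n$ for the convergents of the simple continued fraction expansion of an irrational $x$, among any three consecutive convergents $p_{n-1}/q_{n-1}$, $p_n/q_n$, $p_{n+1}/q_{n+1}$ at least one satisfies the Hurwitz inequality $|x - p/q| < 1/(\sqrt{5}\,q^{2})$. Since convergents are automatically in lowest terms and there are infinitely many of them, this immediately produces infinitely many coprime pairs $(p,q)$ meeting the required bound.

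I would argue by contradiction: suppose the inequality fails for $k = n-1, n, n+1$, so that $|x - p_k/q_k| \geq 1/(\sqrt{5}\,q_k^{2})$ for each such $k$. Using the standard facts that consecutive convergents lie on opposite sides of $x$ and that $|p_n/q_n - p_{n+1}/q_{n+1}| = 1/(q_n q_{n+1})$, summing the bounds for $k = n$ and $k = n+1$ gives
\[
\frac{1}{q_n q_{n+1}} = \left|x-\frac{p_n}{q_n}\right| + \left|x-\frac{p_{n+1}}{q_{n+1}}\right| \geq \frac{1}{\sqrt{5}\,q_n^{2}} + \frac{1}{\sqrt{5}\,q_{n+1}^{2}}.
\]
Setting $r_k := q_{k+1}/q_k$, this rearranges to $r_n + 1/r_n \leq \sqrt{5}$, so $r_n \leq \varphi := (1+\sqrt{5})/2$ (the other root of $t^{2} - \sqrt{5}\,t + 1$ being $1/\varphi < 1 < r_n$). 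The identical argument applied to the pair $(n-1, n)$ yields $r_{n-1} \leq \varphi$.

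Finally, I would combine these inequalities with the recursion $q_{n+1} = a_{n+1}\,q_n + q_{n-1}$, where the partial quotient satisfies $a_{n+1} \geq 1$, to obtain $r_n = a_{n+1} + 1/r_{n-1} \geq 1 + 1/\varphi = \varphi$. Together with $r_n \leq \varphi$ this forces $r_n = \varphi$, contradicting the fact that $r_n = q_{n+1}/q_n$ is rational while $\varphi$ is irrational. The main technical point is ensuring that the rational-versus-irrational clash at the end does all the work; this is precisely what makes $\sqrt{5}$ the sharp constant, since the equation $r + 1/r = \sqrt{5}$ has the golden ratio as its unique solution with $r > 1$.
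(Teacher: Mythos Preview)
Your argument is the standard continued-fraction proof of Hurwitz's theorem and is correct: the key chain $r_n + 1/r_n \le \sqrt{5} \Rightarrow r_n \le \varphi$, together with $r_n = a_{n+1} + 1/r_{n-1} \ge 1 + 1/\varphi = \varphi$, forces the rational number $r_n$ to equal the irrational $\varphi$, which is the desired contradiction. One tiny caveat worth recording is that $r_{n-1} = q_n/q_{n-1} > 1$ is only guaranteed once $n \ge 2$, but this does not affect the conclusion that infinitely many convergents satisfy the inequality.

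As for comparison with the paper: the paper does \emph{not} prove Hurwitz's theorem. It is quoted in the introduction purely as historical background, alongside Dirichlet's theorem and Khintchine's theorem, to motivate the passage from ``approximation for all $x$'' to ``approximation for almost all $x$'' and thence to the effective metric results that are the paper's actual subject. So there is no proof in the paper to compare yours against; your continued-fraction argument is the classical one and would be the natural reference to cite were a proof required.
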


We note that the constant $1/\sqrt{5}$ is the best constant possible; for all $\varepsilon>0$, we can find an irrational $x$ such that 
\begin{equation}\label{Hurwitz}
 \left| x-\frac{p}{q}\right| < \frac{1}{(\sqrt{5}+\varepsilon)q^{2}}  
\end{equation}
has only finitely many coprime solution pairs $(p,\,q)$. Namely, if we take $x$ to be the Golden Ratio $\phi =\frac{1+\sqrt{5}}{2}$ then for any $\varepsilon>0$, we can only find finitely many integer solution pairs $(p,\,q)$ with $\gcd\left(p,\,q\right)=1$ to \eqref{Hurwitz}.

This comment means that if we take $\psi(q)<\frac{1}{\sqrt{5}q}$ then \eqref{psi approximable} will not hold for all $x\in \mathbb{R}$. This naturally leads us to ask under what conditions on $\psi$ can we say that $W(\psi)$ has full measure? Khintchine gave the following elegant theorem as an answer to this question:
\begin{thm*}[Khintchine, 1924]
 Let $\psi:\mathbb{N}\to[0,+\infty)$ be a function. Suppose $\psi$ is (eventually) non-increasing. Then
    \begin{equation*}
        \mu(W(\psi))= \begin{cases}
        0, &\textrm{ if } \sum_{q=1}^{\infty} \psi(q) < +\infty, \\
        1, &\textrm{ if } \sum_{q=1}^{\infty} \psi(q) = +\infty ,
        \end{cases}
    \end{equation*}
where $\mu$ is the Lebesgue measure on $[0,1)$.
\end{thm*}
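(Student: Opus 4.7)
The plan is to prove the convergence and divergence cases separately. For the convergence case, I would apply the direct Borel--Cantelli lemma. For each $q \in \mathbb{N}$, let
\[
E_q = \bigl\{x \in [0,1) : |x - p/q| < \psi(q)/q \text{ for some } p \in \mathbb{Z}\bigr\},
\]
which is a union of at most $q+1$ intervals of length $2\psi(q)/q$, so $\mu(E_q) \leq 2\psi(q) + O(1/q)$. Since $W(\psi) = \limsup_{q \to \infty} E_q$ and $\sum_q \psi(q) < \infty$, Borel--Cantelli immediately yields $\mu(W(\psi)) = 0$; note that this direction does not actually use monotonicity.

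For the divergence case, the direct Borel--Cantelli argument must be replaced by a quasi-independence estimate combined with the divergent form of Borel--Cantelli (Erd\H{o}s--Chung / Kochen--Stone). I would first assume $\psi(q) \leq 1/2$ for all large $q$ (otherwise $E_q = [0,1)$ and the conclusion is immediate), so that $\mu(E_q) \asymp \psi(q)$. The heart of the proof is to establish a constant $C > 0$ such that
\[
\sum_{q, q' \leq N} \mu(E_q \cap E_{q'}) \leq C \Bigl(\sum_{q \leq N} \mu(E_q)\Bigr)^2
\]
for all $N$ large. To this end, I would bound $\mu(E_q \cap E_{q'})$ for $q \neq q'$ by counting integer pairs $(p,p')$ with $|p q' - p' q| < \psi(q) q' + \psi(q') q$, obtaining a main term of size $\mu(E_q)\mu(E_{q'})$ plus an error that must be absorbed. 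This is precisely the step where monotonicity is essential: a decreasing $\psi$ prevents pathological concentration at neighbouring scales and keeps the overlap close to the ``independent'' product. The Paley--Zygmund / Kochen--Stone inequality then delivers $\mu(W(\psi)) \geq 1/C > 0$.

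Finally, I would promote positive measure to full measure via a zero-one law. The set $W(\psi)$ is invariant modulo null sets under every rational translation $x \mapsto x + r \pmod{1}$ with $r \in \mathbb{Q}$, so Cassels' zero-one law (or equivalently the Lebesgue density theorem applied at a density point of $W(\psi)$, combined with the fact that the local density of $W(\psi)$ can be transported to any scale via these rational translations) forces $\mu(W(\psi)) \in \{0,1\}$, and hence $\mu(W(\psi)) = 1$.

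The main obstacle is the quasi-independence estimate in the divergence case. Without monotonicity the second-moment bound can fail outright for carefully designed $\psi$; recovering it in that generality is precisely the content of the Duffin--Schaeffer conjecture, recently resolved by Koukoulopoulos and Maynard and referenced in the abstract. Here the monotonicity assumption is used in an essential quantitative way to control how much the sets $E_q$ and $E_{q'}$ overlap for close-together denominators, and all subsequent effectivization of constants will trace back to this estimate.
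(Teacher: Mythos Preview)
The paper does not actually prove Khintchine's Theorem: it is stated in the introduction as a classical background result, with the remarks that the convergence case ``is an application of the Borel--Cantelli Lemma'', that the zero--full dichotomy is explained by Cassels' zero--full law, and with a reference to \cite{harman1998metric} for details. So there is no proof in the paper to compare against, but your outline is exactly the standard argument and is fully consistent with the brief hints the paper does give.

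One small slip worth fixing: you write $\mu(E_q)\le 2\psi(q)+O(1/q)$. Taken literally this would be fatal, since $\sum_q 1/q$ diverges and Borel--Cantelli would not apply. What your own count actually gives is $(q+1)\cdot 2\psi(q)/q = 2\psi(q) + 2\psi(q)/q$, i.e.\ the error is $O(\psi(q)/q)$, not $O(1/q)$; alternatively, viewing $[0,1)$ as a circle one has $\mu(E_q)\le 2\psi(q)$ exactly. Either way $\sum_q \mu(E_q)<\infty$ follows from $\sum_q\psi(q)<\infty$, and the rest of your convergence argument goes through.

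For the divergence case your plan (quasi-independence overlap estimate, Erd\H{o}s--Chung/Kochen--Stone to get positive measure, then Cassels' zero--one law to upgrade to full measure) is the standard route and is correct in outline; you have also correctly identified the overlap bound as the place where monotonicity is genuinely used.
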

One may wonder why $W(\psi)$ must have either full or zero measure; this is due to a zero-full law by Cassels’ zero-full law (see \cite{Cassels_1950}), this kind of phenomenon is typical for a large class of $\limsup$ sets (see \cite{harman1998metric} for further discussion).

Khintchine's Theorem tells us that, if $\psi$ is non-increasing, the convergence or divergence of the sum 
\begin{equation}\label{sum}
    \sum_{q=1}^{\infty} \psi(q)
\end{equation} 
entirely determines whether or not almost all $x \in [0,1)$ are $\psi$-approximable; for further details we refer the reader to \cite{harman1998metric}. We note that Khintchine's Theorem lets us give immediate improvements on the example $\psi(q)=\frac{1}{q}$ given above; for example, as
\[\sum_{q=1}^{\infty}\frac{1}{q \log(q+1)}=+\infty,\]
 we know that $W\left(\frac{1}{q \log(q+1)}\right)$ has full Lebesgue measure; that is we can improve the function given previously by a logarithmic factor.

One may ask whether we can remove the condition that $\psi$ is monotonic in Khintchine's Theorem. In the convergence case, the condition can be removed as the proof is an application of the Borel-Cantelli Lemma. The divergence case is somewhat more tricky, and monotonicity is required. In \cite{duffin1941khintchine}, Duffin and Schaeffer consider the function $\vartheta$ which is non-monotonic, such that $\sum_{q}\vartheta(q)$ diverges, but $\mu\left(W\left(\vartheta\right)\right)=0$; we refer the reader to \cite{duffin1941khintchine} for further information.

Duffin and Schaeffer went on to consider a generalisation of this problem, considering arbitrary approximation functions $\psi:\mathbb{N} \to \mathbb{R}^{+}$. We note that, contrary to Dirichlet's Theorem above, we do not have the condition that $\gcd(p,\,q)=1$ in the definition of $W(\psi)$. Duffin and Schaeffer also added this condition to relate the rational $\frac{p}{q}$ with the error of approximation $\frac{\psi(q)}{q}$ uniquely. They thus considered the set
\[W'(\psi)\coloneqq \limsup_{q\to\infty}{\left\{x\in[0,1):\left|x-\frac{p}{q}\right|<\frac{\psi(q)}{q}\textrm{, for some }p\in\mathbb{N},\,\gcd{(p,q)}=1.\right\}}.\]
They went on to give the following conjecture (previously known as the Duffin-Schaeffer Conjecture) which was proven by Koukoulopoulos and Maynard in 2019.

\begin{thm*}[Koukoulopoulos-Maynard, 2019]
Let $\psi:\mathbb{N}\to[0,+\infty)$ be a function. Then
    \begin{equation*}
        \mu(W'(\psi))= \begin{cases}
        0, &\textrm{ if } \sum_{q=1}^{\infty} \frac{\psi(q)\varphi(q)}{q} < +\infty, \\
        1, &\textrm{ if } \sum_{q=1}^{\infty} \frac{\psi(q)\varphi(q)}{q} = +\infty ,
        \end{cases}
    \end{equation*}
where $\mu$ is the Lebesgue measure on $[0,1)$ and $\varphi(x)$ is the Euler phi function, defined to be the number of natural numbers coprime to $x$ that are less than or equal to $x$.
\end{thm*}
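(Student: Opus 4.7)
The plan is to handle the two directions separately, since they rest on very different tools.

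The convergence direction is elementary. For each $q \in \mathbb{N}$, set
\[A_q(\psi) = \bigcup_{\substack{0 \le p \le q \\ \gcd(p,q)=1}} \left(\frac{p}{q} - \frac{\psi(q)}{q},\, \frac{p}{q} + \frac{\psi(q)}{q}\right) \cap [0,1),\]
so $W'(\psi) = \limsup_{q \to \infty} A_q(\psi)$ and $\mu(A_q(\psi)) \le 2\psi(q)\varphi(q)/q$. If the series $\sum_q \psi(q)\varphi(q)/q$ converges, the Borel-Cantelli lemma yields $\mu(W'(\psi)) = 0$ immediately, with no monotonicity assumption needed on $\psi$.

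The divergence direction is the Duffin-Schaeffer conjecture proper, and I would follow the reduction which has been standard since Erd\H{o}s and Vaaler. Gallagher's zero-one law forces $\mu(W'(\psi)) \in \{0,1\}$, so it suffices to prove positive measure. Applying the Chung-Erd\H{o}s inequality to the indicators $\mathbf{1}_{A_q(\psi)}$ reduces positivity to the quasi-independence-on-average estimate
\[\sum_{q,r \in S} \mu(A_q(\psi) \cap A_r(\psi)) \ll \Bigl(\sum_{q \in S} \mu(A_q(\psi))\Bigr)^2\]
for suitable finite $S \subset \mathbb{N}$. A direct computation expresses each overlap in terms of the number of reduced fractions $p/q$ and $p'/r$ lying within $\psi(q)/q + \psi(r)/r$ of each other, a quantity governed by $\gcd(q,r)$ and, crucially, by the prime factors shared by $q$ and $r$.

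The truly hard step is controlling this overlap sum, and here the argument of Koukoulopoulos and Maynard is the only route known. I would model the set of pairs $(q,r)$ contributing anomalously large overlap as a GCD graph, whose vertices are denominators, whose edges record prescribed gcd structure, and whose edge weights carry the arithmetic information. The proof then proceeds by an iterative compression: each step isolates the primes in a dyadic range, passes to a dense bipartite subgraph on which the prime factorisations of the two sides are more disentangled, and records the anatomy-of-integers decomposition of each vertex. After a bounded number of compressions the graph either supports a direct sieve bound that yields quasi-independence, or violates a clean combinatorial inequality and can be discarded.

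The main obstacle is of course this last step: reproducing the Koukoulopoulos-Maynard GCD-graph argument from scratch is not realistic, so for the present paper I would cite their theorem as a black box and concentrate on combining it cleanly with Gallagher's zero-one law and the Chung-Erd\H{o}s reduction above to close the divergence half.
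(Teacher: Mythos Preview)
Your proposal is correct and matches the paper's treatment: the paper does not prove this theorem at all but simply states it as background, noting that the convergence case follows from Borel--Cantelli and referring to \cite{koukoulopoulos2020duffin} for the divergence case. Your outline of the Gallagher zero--one law, Chung--Erd\H{o}s reduction, and GCD-graph compression is accurate as a summary of the Koukoulopoulos--Maynard argument, but since you conclude by citing their theorem as a black box, you arrive at exactly the same place as the paper.
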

That is, the convergence (or divergence respectively) of the series
\begin{align}\label{sum'}
    \sum_{q=1}^\infty\frac{\psi(q)\varphi(q)}{q}
\end{align}
implies that the Lebesgue measure of $W'(\psi)$ is 0 (or 1 respectively).

Again, the convergence case follows directly from the Borel-Cantelli Lemma, while the divergence case required deeper insight; for the full proof we refer to \cite{koukoulopoulos2020duffin}. We return to results around this theorem later in the thesis.

Khintchine's Theorem tells us about the measure of $W(\psi)$, but  to gain a deeper understanding of Khintchine's Theorem is to consider the number of solutions to \eqref{psi approximable}, given that $1 \leq q \leq Q$ for a fixed $Q \in \mathbb{N}$. We add the further restriction that $\psi: \mathbb{N} \to [0,1/2)$; this ensures that given any integer $q$, there is maximally one $p$ such that $(p,\,q)$ satisfy \eqref{psi approximable}. Thus, counting solution pairs $(p,\,q)$ to \eqref{psi approximable} is equivalent to counting the number of $q$'s for which we can find a $p$ satisfying \eqref{psi approximable}, subject to the condition that $1\leq q\leq Q$. More explicitly, given $x \in [0,1)$ and $Q \in \mathbb{N}$ we define
\begin{align*}
    S(x,Q)\coloneqq\#\left\{q\in\mathbb{N}\cap[1,Q]\,:\,\left|x-\frac{p}{q}\right|<\frac{\psi(q)}{q}\textrm{, for some }p\in\mathbb{N}\right\}; 
\end{align*} 
that is, $S(x,\,Q)$ is the number of positive integer pairs $(p,\,q)$ satisfying \eqref{psi approximable} and $1\leq q \leq Q$.

In this context, Khintchine's Theorem tells us that if $\psi$ is non-increasing and \eqref{sum} is divergent, then for almost every $x\in[0,1)$, 
\begin{align*}
    \lim_{Q\to\infty}S(x,Q)=+\infty. 
\end{align*}
We have seen that there are infinitely many solutions satisfying \eqref{psi approximable}, but Khintchine's Theorem does not give a growth rate in the number of solutions, nor does it suggest any quantitative relation between $S(x,\, Q)$ and $\psi$. We refer to these problems as the quantitative problem of Khintchine's Theorem. Schmidt was able to give asymptotic relations between $S(x,\,Q)$ and $\psi$; we give the statement as stated in \cite{harman1998metric}:

\begin{thm*}[Schmidt, 1960]
    Let $\psi:\mathbb{N} \to [0,1/2)$. Suppose $\psi$ is non-increasing and \eqref{sum} diverges. Then for any $\varepsilon>0$ and almost every $x\in[0,1)$, as $Q\to\infty$ we have that
    \begin{align*}
    S(x,Q)=2\Psi(Q)+O_{\psi,\varepsilon,x}\left(\Psi^{1/2}(Q)\log^{2+\varepsilon}{\Psi(Q)}\right),
    \end{align*}
where for any $Q\in\mathbb{N}$, $\Psi(Q)$ is defined to be   
\begin{align*}
    \Psi(Q)\coloneqq\sum_{q=1}^Q\psi(q).
 \end{align*}
\end{thm*}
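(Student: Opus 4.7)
The plan is a second-moment / quasi-independence argument, following Schmidt's original strategy. For each $q$, set $\chi_q(x)$ to be the indicator of the set $\{x \in [0,1): |x-p/q| < \psi(q)/q \text{ for some } p\in \mathbb{N}\}$, so that $S(x,Q) = \sum_{q=1}^{Q} \chi_q(x)$. The hypothesis $\psi(q) < 1/2$ guarantees that the $q$ arcs centred at $0/q,\,1/q,\,\dots,\,(q-1)/q$ are pairwise disjoint on the torus $\mathbb{R}/\mathbb{Z}$, whence $\int_0^1 \chi_q(x)\,dx = 2\psi(q)$ and $\int_0^1 S(x,Q)\,dx = 2\Psi(Q)$, matching the main term of the claimed asymptotic.

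The heart of the argument is a quasi-orthogonality estimate on the error $R(x,Q) := S(x,Q) - 2\Psi(Q) = \sum_{q \leq Q}(\chi_q(x) - 2\psi(q))$. First I would handle the diagonal contribution, using $\int_0^1(\chi_q(x) - 2\psi(q))^2 dx \leq 2\psi(q)$, which sums to $O(\Psi(Q))$. For the off-diagonal terms, I would bound $\int_0^1 \chi_{q_1}\chi_{q_2}\,dx$ by analysing the Lebesgue measure of overlaps between arcs around $p_1/q_1$ and $p_2/q_2$; the crucial input is that distinct such fractions lie at distance $\geq 1/(q_1 q_2)$, which forces overlaps to be scarce. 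A case split based on whether $\psi(q_1)q_2 + \psi(q_2)q_1$ is large enough for an arc pair to meet at all then yields
\[\int_0^1 \left(\sum_{q=M+1}^{M+N}\bigl(\chi_q(x) - 2\psi(q)\bigr)\right)^{2} dx \; \ll \; \Psi(M+N) - \Psi(M).\]
This quasi-subadditive variance bound is the main analytic/number-theoretic input.

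With the variance estimate in hand, the conclusion follows from a standard Gál-Koksma / Erd\H{o}s type lemma, which upgrades an $L^2$-variance of size $V(Q)$ for the partial sums of a sequence of mean-zero functions into the almost-everywhere bound $|R(x,Q)| = O_{\varepsilon,x}\!\bigl(V(Q)^{1/2}(\log V(Q))^{2+\varepsilon}\bigr)$; taking $V(Q) = \Psi(Q)$ then delivers the error term as stated. The principal obstacle is the off-diagonal variance estimate itself: the geometry of the approximation arcs has to interact tightly enough with the Farey-type separation of rationals of different denominators for the $O(\Psi(Q))$ bound to survive, particularly in the regime where $\psi(q)/q$ is comparable to $1/q^{2}$ and arcs from many distinct denominators can conspire to overlap. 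The passage from the variance bound to the almost-everywhere asymptotic is then a black-box application of the Gál-Koksma machinery.
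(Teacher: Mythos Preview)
Your overall architecture (second moment plus a G\'al--Koksma/Harman Lemma~1.5 type almost-everywhere lemma) is exactly right, but the claimed variance bound
\[
\int_0^1\Bigl(\sum_{M<q\le N}\bigl(\chi_q(x)-2\psi(q)\bigr)\Bigr)^2dx \ \ll\ \Psi(N)-\Psi(M)
\]
is not true, and this is the heart of the matter. Your Farey-type separation ``distinct rationals $p_1/q_1,\,p_2/q_2$ lie at distance $\ge 1/(q_1q_2)$'' only applies when the two fractions are genuinely distinct; but in the definition of $\chi_q$ the numerators are \emph{not} taken coprime to $q$, so $p_1/q_1$ and $p_2/q_2$ can coincide whenever $\gcd(q_1,q_2)>1$. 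The resulting overlap bound is
\[
\mu(E_{q_1}\cap E_{q_2})\ \le\ 4\psi(q_1)\psi(q_2)+2\gcd(q_1,q_2)\min\!\Bigl(\tfrac{\psi(q_1)}{q_1},\tfrac{\psi(q_2)}{q_2}\Bigr),
\]
and summing the $\gcd$ term over $q_1<q_2\le N$ produces a quantity of divisor-sum type, essentially $\sum_{n\le N} d(n)\psi(n)$, which is of order $\Psi(N)\log\Psi(N)$ rather than $\Psi(N)$. A quick consistency check confirms this: had the variance really been $O(\Psi)$, the G\'al--Koksma machinery would output an error $\Psi^{1/2}(\log\Psi)^{3/2+\varepsilon}$, strictly sharper than the $\log^{2+\varepsilon}$ exponent in Schmidt's theorem; the extra half-power of the logarithm in the statement is exactly the trace of the divisor contribution you have omitted.

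The paper (following Harman's presentation of Schmidt's method) deals with this by introducing an auxiliary count $S^*(\alpha,0,N)$ that only records solutions with $\gcd(m,n)\le\Gamma(n)=\Psi(n)^2+1$. Restricting the gcd caps the divisor-type sum at a restricted divisor function $d^*(n)$, and one obtains a variance bound of the form $K\sum_{u<n\le v}\varphi_n$ with $\sum_{n\le N}\varphi_n\ll\Psi(N)\log\Psi(N)\log\log\Psi(N)$ (Lemmas \ref{Lemma 4.1}--\ref{Lemma 4.3}); feeding this into Theorem~\ref{thm3} then yields the $\Psi^{1/2}\log^{2+\varepsilon}\Psi$ error. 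The difference $S-S^*$ is controlled separately by a first-moment/Borel--Cantelli argument and contributes only $O(\log^2\Psi)$. So your plan needs one additional idea: either accept the larger variance $\Psi\log\Psi$ and carry the extra logarithm through, or---as the paper does---split off the large-gcd solutions before running the second-moment argument.
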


Roughly speaking, Schmidt's Theorem tells us that for almost all $x\in[0,1)$, the number of solution pairs $(p,q)$ to \eqref{psi approximable} such that $1\leq q\leq Q$ is approximately equal to $2\Psi(Q)$ when $Q$ is sufficiently large. 

The error term above involves an implicit constant which depends on the point $x$. However, establishing an explicit constant for this result, and also some of its preliminary results, has the potential of establishing more research in some other fields; one example is that these results may be useful is signal processing (see \cite{beresnevich2020number}).

In this paper we prove the following quantitative version of Schmidt's Theorem.

\begin{thm}[Effective Schmidt's Theorem]\label{EST} 
Let $\psi:\mathbb{N}\to[0,1/2)$ be a non-increasing function. Suppose \eqref{sum} diverges. Then for any $\varepsilon>0$ and $\delta>0$, there exists a measurable $E_{\varepsilon,\delta}\subset[0,1)$ such that $\mu(E_{\varepsilon,\delta})<\delta$ and for any $x\in[0,1)\setminus E_{\varepsilon,\delta}$ and $Q\in\mathbb{N}$, 
\begin{align*}
    \left|S(x,Q)-2\Psi(Q)\right|\leq 
    \max{\left(
    N_{\varepsilon,\delta},\,
    K_{\varepsilon}\left(\Psi^{1/2}(Q)\log^{2+\varepsilon}(\Psi(Q)+1)\right)\right)},
\end{align*}
where
\begin{align*}
    N_{\varepsilon,\delta}&=2\max{\left\{n\in\mathbb{N}:\left\lceil\left(\frac{2K_{\varepsilon}}{\varepsilon\delta}\right)^{1/\varepsilon}\right\rceil+1>44\Psi(n)\log{(3\Psi^2(n)+3)}\log{(2\log{(3\Psi^2(n)+3)})}\right\}}, \\
    K_{\varepsilon}&=\frac{28+858(\varepsilon+1)7^\varepsilon}{{\psi}^{1/2}(1)\log^{2+\varepsilon}{(\psi(1)+1)}}.
\end{align*}
\end{thm}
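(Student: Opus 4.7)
The plan is to follow Schmidt's classical route --- an explicit $L^2$ variance estimate for $S(\cdot,Q)$, Chebyshev's inequality, and a Borel--Cantelli argument along a carefully chosen subsequence $(Q_k)$ --- while tracking every constant explicitly and handling the transfer from subsequence values to arbitrary $Q$ by monotonicity.

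First I would establish an explicit variance bound. Writing $S(x,Q)=\sum_{q=1}^{Q}\mathbf{1}_{A_q}(x)$, where $A_q$ is the union of the intervals $(p/q-\psi(q)/q,\,p/q+\psi(q)/q)$ for $0\leq p\leq q$, one has $\int_0^1\mathbf{1}_{A_q}=2\psi(q)$ up to a negligible boundary term. A direct computation of $\int_0^1(S(x,Q)-2\Psi(Q))^{2}\,dx$ reduces, via Fourier expansion or a divisor/$\gcd$ analysis of the intersections $|A_{q_1}\cap A_{q_2}|$, to an absolute bound of the shape
\begin{equation*}
\int_0^1(S(x,Q)-2\Psi(Q))^2\,dx \;\leq\; c_1\,\Psi(Q)\,\log(3\Psi^2(Q)+3)\,\log(2\log(3\Psi^2(Q)+3)),
\end{equation*}
where the combinatorial constant $c_1$ (tracked carefully) is what ultimately produces the $44$ in the definition of $N_{\varepsilon,\delta}$.

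Next, choose a geometrically growing subsequence $(Q_k)$ defined by $\Psi(Q_k)\in[\alpha^k,\alpha^{k+1})$ for some fixed $\alpha>1$, and apply Chebyshev at the threshold $T_k:=\tfrac{1}{2}K_{\varepsilon}\Psi^{1/2}(Q_k)\log^{2+\varepsilon}(\Psi(Q_k)+1)$. The variance estimate then gives
\begin{equation*}
\mu(E_k) \;\leq\; \frac{c_1\,\Psi(Q_k)\,\log(3\Psi^2(Q_k)+3)\log(2\log(3\Psi^2(Q_k)+3))}{T_k^{\,2}}
\;\ll_{\varepsilon}\; \frac{1}{K_{\varepsilon}^{2}\log^{1+2\varepsilon}(\Psi(Q_k)+1)}.
\end{equation*}
Summing from $k\geq k_0$ gives a tail of order $1/(\varepsilon K_{\varepsilon}^2\,k_0^{2\varepsilon})$; demanding that this tail is $\leq \delta$ forces $k_0$ to be roughly $(2K_{\varepsilon}/(\varepsilon\delta))^{1/\varepsilon}$, which is precisely the threshold appearing inside $N_{\varepsilon,\delta}$. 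Setting $E_{\varepsilon,\delta}:=\bigcup_{k\geq k_0}E_k$ gives the exceptional set.

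To pass to arbitrary $Q$, for $Q\in[Q_k,Q_{k+1})$ with $k\geq k_0$ and $x\notin E_{\varepsilon,\delta}$ I would use the monotonicity $S(x,Q_k)\leq S(x,Q)\leq S(x,Q_{k+1})$ and $\Psi(Q_k)\leq\Psi(Q)\leq\Psi(Q_{k+1})$, combined with the geometric ratio $\Psi(Q_{k+1})\leq \alpha\Psi(Q_k)$, to absorb the gap $\Psi(Q_{k+1})-\Psi(Q_k)$ into the main error term and conclude
\begin{equation*}
|S(x,Q)-2\Psi(Q)|\;\leq\;K_{\varepsilon}\,\Psi^{1/2}(Q)\log^{2+\varepsilon}(\Psi(Q)+1).
\end{equation*}
For $Q<Q_{k_0}$, the trivial bound $|S(x,Q)-2\Psi(Q)|\leq 2Q\leq N_{\varepsilon,\delta}$ handles the remaining case.

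The hard part will be the numerical bookkeeping. The precise form of $K_\varepsilon=(28+858(\varepsilon+1)7^{\varepsilon})/(\psi^{1/2}(1)\log^{2+\varepsilon}(\psi(1)+1))$ requires a sharp variance estimate with explicit constants (the $7^\varepsilon$ almost certainly arises from the Chebyshev factor $4$ combined with the subsequence ratio $\alpha$ raised to the power controlling the interpolation), a calibration of $\alpha$ that keeps both the overlap constant and the summation constant small, and careful treatment of the small-$Q$ normalisation $\psi(1)$ so that the bound is uniform all the way down to $Q=1$ on the complement of $E_{\varepsilon,\delta}$.
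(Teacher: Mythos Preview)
Your interpolation step contains a fatal gap. With a geometric subsequence $\Psi(Q_k)\in[\alpha^k,\alpha^{k+1})$, the jump $\Psi(Q_{k+1})-\Psi(Q_k)$ is of order $(\alpha-1)\Psi(Q_k)$, which for large $k$ is vastly larger than the target error $K_\varepsilon\Psi^{1/2}(Q_k)\log^{2+\varepsilon}(\Psi(Q_k)+1)$; it therefore cannot be ``absorbed into the main error term'' as you claim. A Chebyshev-along-a-subsequence argument can only balance gap against error when the target error is of order $\Psi^{2/3}$, not $\Psi^{1/2}$ --- that is precisely the content of Harman's Lemma~1.4 (Theorem~\ref{thm2} here). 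Conversely, a subsequence fine enough to make the gap smaller than $\Psi^{1/2}\log^{2+\varepsilon}$ (for instance $\Psi(Q_k)\asymp k$) produces Chebyshev bounds $\mu(E_k)\ll(\log k)^{-3-2\varepsilon}$, whose sum over $k$ diverges, so no choice of a single subsequence makes your scheme work at exponent $1/2$.

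The paper's route differs in two essential respects. First, to reach the $\Psi^{1/2}$ error it invokes Theorem~\ref{thm3} (the effective Harman Lemma~1.5), whose hypothesis is a variance bound for sums over \emph{arbitrary intervals} $\sum_{m<q\leq n}$, not merely $\sum_{q\leq Q}$; this feeds a dyadic decomposition of $(0,n_j]$ via the binary expansion of $j$, and a single Cauchy--Schwarz over the dyadic blocks then controls all $j$ simultaneously at the cost of only a factor $\log^{3/2+\varepsilon}$. Second, the paper does not apply this machinery to $S$ directly: it splits $S=S^*+(S-S^*)$, where $S^*$ counts only those solutions with $\gcd(m,n)\leq\Psi^2(n)+1$. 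Only $S^*$ is shown to satisfy the required interval-variance bound (via Lemma~4.4 and inequality (4.2.10) of Harman), while $|S-S^*|\leq 16L^2(N)$ is controlled by a separate Borel--Cantelli argument resting on the effective Lemmas~\ref{Lemma 4.1}--\ref{Lemma 4.3}. Your single variance bound for $S$ from $1$ to $Q$, even if it held with the constant you claim, would not support the machinery needed for the stated exponent.
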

Roughly speaking, this tells us that other than on a set of Lebesgue measure less than $\delta$ we can give an effective double bound for the size of $S(x,\,Q)$. We further note that if we let $\delta \to 0$ and $Q \to \infty$ we regain Schmidt's Theorem. It can be viewed on an effective version of the quantitative Borel-Cantelli lemma which is a standard tool for establishing quantitative statements such as Schmidt's Theorem.

The key tool to proving this result is a probabilistic result that we will give in Section 2. The structure of the rest of the paper is as follows. In Section 2 we will give the probabilistic results we use and will immediately prove Theorem \ref{EST}. In Section 3 we will demonstrate the wide applicability of the probabilistic results by giving a range of applications as follows:
\begin{itemize}
    \item Quantative Koukoulopoulos-Maynard Theorem (see \cite{aistleitner2022metric})
    \item Applications to $M_{0}$-sets (see \cite{Pollington2022})
    \item Normal numbers
    \item Strong law of large numbers
\end{itemize}
Finally, in section 4 we will give the proofs of the probabilistic results before giveing supplementary proofs and a general form of the asymptotic results that can be widely applied in the appendix.

The reader who is solely interested in the probabilistic results need only refer to the start of Section 2 and to Section 4; however we hope the applications to number theoretic problems proves interesting to this reader too.

\section{Probabilistic Results and Proof of an Effective Schmidt's Theorem}
\subsection{Probabilistic Results}
The main tool in proving the asymptotic results above are results of the kind of Lemma 1.4 and 1.5from \cite{harman1998metric}. Here we state effective versions of these lemmas which will allow us to give the proof of Theorem \ref{EST}, as well as the further applications in Section 3; we leave the proof of these statements to section 4.

\begin{thm}[Effective Version of Lemma 1.4 from \cite{harman1998metric}]\label{thm2} 
Let $(X,\,\Omega,\,\mu)$ be a measure space, and suppose that $0<\mu(X)<+\infty$. Let $f_k(x)$, $k \in \mathbb{N},$ be a sequence of non-negative $\mu$-measurable functions, where for all $k \in \mathbb{N},\, x \in X,$ we have that  $f_{k}(x) <C$ for some constant $C\in \mathbb{R}$. Let $\{f_k\in\mathbb{R}\}_{k\in\mathbb{N}}$ and $\{\varphi_k\in\mathbb{R}\}_{k\in\mathbb{N}}$ be sequences of real numbers such that for any $k\in\mathbb{N}$, 
\begin{equation}\label{first conditions on fk, phik}
    0\leq f_k\leq \varphi_k\leq 1.
\end{equation} 
For any $N\in\mathbb{N}$, define
\[\varPhi(N)=\sum_{k=1}^N\varphi_k,\] 
and suppose that $\lim_{N\to\infty}\varPhi(N)=+\infty$. Further, assume that there exists $K>0$ such that for any $N\in\mathbb{N}$ we have that
\begin{align}\label{1.2.7}
\int_X\left(\sum_{k=1}^N(f_k(x)-f_k)\right)^2\,\mathrm{d}\mu(x)\leq K\varPhi(N).
\end{align}
Then for any $\varepsilon>0$ and $\delta>0$, there exists $E_{\varepsilon,\delta}\subset X$ and $K_{\varepsilon,\delta}>0$ such that $\mu(E_{\varepsilon,\delta})<\delta$ and for any $x\in X\setminus E_{\varepsilon,\delta}$ and $N\in\mathbb{N}$,
\begin{align*}
\left|\sum_{k=1}^Nf_k(x)-\sum_{k=1}^Nf_k\right|\leq K_{\varepsilon,\delta}\left(\varPhi^{2/3}(N)\log^{1/3+\varepsilon}{(\varPhi(N)+2)}\right),
\end{align*}
where 
\begin{align*} 
K_{\varepsilon,\delta}&=\max \left\{\frac{CN_{\varepsilon,\delta}}{\max{\left(\varPhi_0^{2/3}\log^{1/3+\varepsilon}(\varPhi_0+2),1\right)}},\, 
\frac{4}{\log^{2\varepsilon/3}{(\varPhi_0+2)}}\left(\frac{\log{4}}{\log{3}}\right)^{1+\varepsilon}\left(4+\frac{1+\varepsilon}{\log{3}}+\frac{1}{4\log^{1+\varepsilon}{4}}\right)\right\},\\
N_{\varepsilon,\delta}&=\min{\left\{n\in\mathbb{N}:\varPhi(n)>j^3_{\varepsilon,\delta}\log^{1+\varepsilon}(j_{\varepsilon,\delta}+2)\right\}},\\ j_{\varepsilon,\delta}&=1+\left\lceil\exp{\left(\frac{1+\log^{-1-\varepsilon}{3}}{\varepsilon\delta}K\right)^{1/\varepsilon}}\right\rceil,
\end{align*}
where $\varPhi_{0}=\min{\{\varPhi(n)\in\mathbb{R}^+:n\in\mathbb{N}\}}$ is the minimal non-zero value $\varPhi$ attains.
\end{thm}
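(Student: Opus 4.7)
The plan is to execute a Borel--Cantelli argument along a carefully chosen subsequence of indices $\{N_j\}$, controlling $|S_N|$ at the subsequence points via Chebyshev's inequality and using the non-negativity of the $f_k$ to interpolate at intermediate $N$, then to track the resulting constants so as to recover the explicit formulas in the statement.

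I would define $N_j=\min\{n\in\mathbb{N}:\varPhi(n)>j^3\log^{1+\varepsilon}(j+2)\}$, which is finite since $\varPhi\to\infty$, so that $N_{j_{\varepsilon,\delta}}=N_{\varepsilon,\delta}$. Writing $S_N(x)=\sum_{k=1}^N(f_k(x)-f_k)$ and $T_j=\varPhi^{2/3}(N_j)\log^{1/3+\varepsilon}(\varPhi(N_j)+2)$, Chebyshev's inequality together with hypothesis \eqref{1.2.7} yields
$$\mu\bigl(\{x:|S_{N_j}(x)|>T_j\}\bigr)\leq\frac{K\varPhi(N_j)}{T_j^2}=\frac{K}{\varPhi^{1/3}(N_j)\log^{2/3+2\varepsilon}(\varPhi(N_j)+2)}.$$
The subsequence was chosen precisely so that $\varPhi^{1/3}(N_j)>j\log^{(1+\varepsilon)/3}(j+2)$ and $\log(\varPhi(N_j)+2)\geq\log(j+2)$, which gives a bound $K/[j\log^{1+7\varepsilon/3}(j+2)]\leq K/[j\log^{1+\varepsilon}(j+2)]$. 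Setting $E_{\varepsilon,\delta}=\bigcup_{j\geq j_{\varepsilon,\delta}}\{x:|S_{N_j}(x)|>T_j\}$ and bounding the tail $\sum_{j\geq j_{\varepsilon,\delta}}K/[j\log^{1+\varepsilon}(j+2)]$ via an explicit integral comparison (treating the initial term separately from the integral) produces $\mu(E_{\varepsilon,\delta})<\delta$ for precisely the $j_{\varepsilon,\delta}$ in the statement, the factor $1+\log^{-1-\varepsilon}(3)$ arising from this splitting.

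Next I would interpolate. For $N\in[N_j,N_{j+1}]$, non-negativity of the $f_k$ and the bound $f_k\leq\varphi_k$ give
$$S_{N_j}(x)-(\varPhi(N_{j+1})-\varPhi(N_j))\leq S_N(x)\leq S_{N_{j+1}}(x)+(\varPhi(N_{j+1})-\varPhi(N_j)),$$
so for $x\notin E_{\varepsilon,\delta}$ and $j\geq j_{\varepsilon,\delta}$ we have $|S_N(x)|\leq T_{j+1}+(\varPhi(N_{j+1})-\varPhi(N_j))$. By the minimality of $N_{j+1}$ and $\varphi_k\leq 1$, the gap is at most $(j+1)^3\log^{1+\varepsilon}(j+3)-j^3\log^{1+\varepsilon}(j+2)+1$, which is of order $j^2\log^{1+\varepsilon}(j+2)$; combined with $T_{j+1}$ of the same order, $|S_N(x)|$ is bounded by a constant multiple of $\varPhi^{2/3}(N)\log^{1/3+\varepsilon}(\varPhi(N)+2)$. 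The numerical constants in the second argument of the max defining $K_{\varepsilon,\delta}$ (the $\log 3$, $\log 4$ factors and $\log^{-2\varepsilon/3}(\varPhi_0+2)$) arise from making the comparisons $\log(\varPhi(N)+2)\leftrightarrow\log(j+2)$ uniform in $N\geq N_{\varepsilon,\delta}$ and from the extremal configurations at $j=j_{\varepsilon,\delta}$. For $N<N_{\varepsilon,\delta}$, the inequality $|S_N(x)|\leq\max(\sum_{k=1}^N f_k(x),\sum_{k=1}^N f_k)\leq NC\leq N_{\varepsilon,\delta}C$ (assuming $C\geq 1$, which is harmless) produces the first argument of the max.

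The principal obstacle is the bookkeeping rather than the conceptual structure: the main Borel--Cantelli argument proceeds along standard lines, but realising the specific numerical expression for $K_{\varepsilon,\delta}$ in the statement requires keeping all logarithmic ratios, gap estimates, and integral comparisons in their sharp effective form uniformly over $j\geq j_{\varepsilon,\delta}$ rather than asymptotically. I would exploit the monotonicity of $N\mapsto\varPhi^{2/3}(N)\log^{1/3+\varepsilon}(\varPhi(N)+2)$ to transfer the bound at the subsequence endpoint $N_{j+1}$ to all $N\in[N_j,N_{j+1}]$, thereby avoiding any need for a maximal inequality.
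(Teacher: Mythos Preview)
Your proposal is correct and follows essentially the same route as the paper: the same subsequence $N_j$, Chebyshev at those points, a tail estimate to build $E_{\varepsilon,\delta}$, interpolation via non-negativity, and the trivial bound $|S_N|\le CN$ for small $N$. One minor deviation worth noting: the paper takes the simpler threshold $j^2\log^{1+\varepsilon}(j+2)$ for the bad sets $A_j$ rather than your $T_j$, and the factor $1+\log^{-1-\varepsilon}3$ then arises from the estimate $\varPhi(N_j)\le(1+\log^{-1-\varepsilon}3)\,j^3\log^{1+\varepsilon}(j+2)$ (not from splitting the tail sum), after which the stated constants in $K_{\varepsilon,\delta}$ drop out directly from two short lemmas bounding $\Psi(N_j)-\Psi(N_{j-1})$ and $j^2\log^{1+\varepsilon}(j+2)$ in terms of $\varPhi(N_{j-1})$.
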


Similarly, we are able to come up with an effective version of Lemma 1.5 from \cite{harman1998metric}, as follows.

\begin{thm}[Effective Version of Lemma 1.5 from \cite{harman1998metric}]\label{thm3} 
Let $(X,\Omega,\mu)$ be a measure space, and suppose that $0<\mu(X)<+\infty$. Let $f_k(x),\, k\in \mathbb{N}$, be a sequence of non-negative $\mu$-measurable functions, where for all $k \in \mathbb{N},\, x \in X,$ we have that $f_{k}(x) <C$ for some constant $C\in \mathbb{R}$. Let $\{f_k\in\mathbb{R}\}_{k\in\mathbb{N}}$ and $\{\varphi_k\in\mathbb{R}\}_{k\in\mathbb{N}}$ be sequences of real numbers such that, for all $k\in\mathbb{N}$, 
\begin{equation*}\label{second fk phik condition}
0\leq f_k\leq \varphi_k.    
\end{equation*}
For any $N\in\mathbb{N}$, let
\[\varPhi(N)=\sum_{k=1}^N\varphi_k.\]
and suppose that $\lim_{N\to\infty}\varPhi(N)=+\infty$. Further, assume that there exists $K>0$ such that for any $m,n\in\mathbb{N}$, if $m<n$ then
\begin{align}\label{1.2.7.a}
\int_X\left(\sum_{k=m+1}^n(f_k(x)-f_k)\right)^2\,\textrm{d}\mu(x)\leq K\left(\varPhi(n)-\varPhi(m)\right).
\end{align}
Then for any $\varepsilon>0$ and $\delta>0$, there exists $E_{\varepsilon,\delta}\subset X$ and $K_{\varepsilon,\delta}>0$ such that $\mu(E_{\varepsilon,\delta})<\delta$ and for any $x\in X\setminus E_{\varepsilon,\delta}$ and $N\in\mathbb{N}$,
\begin{align*}
\left|\sum_{k=1}^Nf_k(x)-\sum_{k=1}^Nf_k\right|\leq K_{\varepsilon,\delta}\left(\varPhi^{1/2}(N)\log^{3/2+\varepsilon}{\varPhi(N)}+\max_{1\leq k\leq N}{f_k}\right),
\end{align*}
where
\begin{align*}
K_{\varepsilon,\delta}&=\max \left\{\frac{CN_{\varepsilon,\delta}}{\max{\left(\varPhi_0^{1/2}\log^{3/2+\varepsilon}(\varPhi_0+2)+f_1,1\right)}},\,\frac{2}{\log^{3/2+\varepsilon/2}{2}}\left(1+\frac{1}{\sqrt{2}\log^{3/2+\varepsilon}{4}}\right)
\left(\frac{\log{4}}{\log{3}}\right)^{3/2+\varepsilon}\right\},\\
N_{\varepsilon,\delta}&=\max{\{n\in\mathbb{N}:\varPhi(n)<r_{\varepsilon,\delta}\}}, \\
r_{\varepsilon,\delta}&=\left\lceil\left(\frac{2K}{\varepsilon\delta}\right)^{1/\varepsilon}\right\rceil+1,
\end{align*}
where $\varPhi_{0}=\min{\{\varPhi(n)\in\mathbb{R}^+:n\in\mathbb{N}\}}$ is the minimal non-zero value $\varPhi$ attains.
\end{thm}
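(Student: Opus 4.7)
The plan is to prove Theorem \ref{thm3} by the classical blockwise second-moment argument underlying Harman's Lemma 1.5, but with every step made quantitative so as to extract the explicit constants $r_{\varepsilon,\delta}$, $N_{\varepsilon,\delta}$, and $K_{\varepsilon,\delta}$ in the stated form.

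First, I would fix a geometric ratio (the factor $(\log 4/\log 3)^{3/2+\varepsilon}$ appearing in $K_{\varepsilon,\delta}$ strongly suggests taking base $4/3$) and define anchor indices $n_j = \min\{n\in\mathbb{N} : \Phi(n) \geq (4/3)^j\}$, which are well defined since $\Phi \to \infty$. Because $\varphi_k$ is now unbounded, in contrast with Theorem \ref{thm2}, the overshoot $\Phi(n_j) - (4/3)^j$ can be as large as $\varphi_{n_j}$, and this boundary effect is what ultimately produces the extra $\max_{1 \leq k \leq N} f_k$ term in the conclusion, via $f_k \leq \varphi_k$. On each block $[n_j+1, n_{j+1}]$, the hypothesis \eqref{1.2.7.a} controls the second moment of the block sum, and a Rademacher--Menshov-style maximal inequality extends this to a bound that is uniform over all intermediate $N \in (n_j, n_{j+1}]$; this is the mechanism that upgrades the naive logarithmic exponent $1/2+\varepsilon$ to $3/2+\varepsilon$.

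Next, I would apply Chebyshev's inequality with thresholds $T_j$ chosen so that the per-block measure bounds form a convergent series whose tail can be computed explicitly. With $T_j$ comparable to $\sqrt{K}\,(4/3)^{j/2}\,j^{3/2+\varepsilon}$, multiplied by the combinatorial constant produced by the maximal inequality, the block-level bad set has measure bounded by a quantity of order $K/j^{1+\varepsilon}$, whose tail from some index $j_0$ onwards is controlled by $K/(\varepsilon j_0^\varepsilon)$. Forcing this tail to be less than $\delta/2$ yields $j_0 \geq (2K/(\varepsilon\delta))^{1/\varepsilon}$, which is exactly the form of $r_{\varepsilon,\delta}$; transported back through the anchors this gives the cutoff $N_{\varepsilon,\delta} = \max\{n : \Phi(n) < r_{\varepsilon,\delta}\}$, and the exceptional set $E_{\varepsilon,\delta}$ is the union of the block bad sets for $j \geq j_0$, with $\mu(E_{\varepsilon,\delta}) < \delta$.

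Finally, for $x \notin E_{\varepsilon,\delta}$ with $N > N_{\varepsilon,\delta}$, the deviation $\left|\sum_{k=1}^N f_k(x) - \sum_{k=1}^N f_k\right|$ telescopes across blocks and the geometric series $\sum_{j \leq J} T_j$ is bounded by a constant multiple of $T_J$ via the factor $1/(1-(3/4)^{1/2})$, producing the claimed $\Phi^{1/2}(N) \log^{3/2+\varepsilon}(\Phi(N))$ with an explicit constant; for $N \leq N_{\varepsilon,\delta}$ the trivial bound $|f_k(x) - f_k| \leq C$ gives deviation at most $CN_{\varepsilon,\delta}$, which is absorbed into the first entry of the max defining $K_{\varepsilon,\delta}$. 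The main obstacle is the precise bookkeeping of constants through the Rademacher--Menshov step and the geometric summation: the exact form of $K_{\varepsilon,\delta}$ — with its factors $(\log 4/\log 3)^{3/2+\varepsilon}$ and $\bigl(1 + 1/(\sqrt{2}\log^{3/2+\varepsilon}4)\bigr)$ — is the fingerprint of this calculation, and matching it requires choosing the dyadic base optimally and propagating every multiplicative factor carefully through the full argument.
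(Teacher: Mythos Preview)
Your overall strategy is a plausible route to an effective Lemma~1.5, but it is \emph{not} the one the paper uses, and you have misread the ``fingerprint'' of the constants.

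The paper does \emph{not} use geometric anchors with base $4/3$. It sets $n_j=\max\{n:\varPhi(n)<j\}$, i.e.\ anchors at \emph{integer} levels of $\varPhi$, and then employs the classical binary (dyadic) decomposition of the interval $(0,n_j]$: writing $j=\sum_v b(j,v)2^v$, one partitions $(0,n_j]$ into at most $r+1$ blocks (with $r=\lfloor\log_2 j\rfloor$), defines the block sums $F(i,s,x)$, and introduces the global quantity $G(r,x)=\sum_{s\le r}\sum_{i<2^{r-s}}F(i,s,x)^2$. The variance hypothesis \eqref{1.2.7.a} gives $\int_X G(r,x)\,d\mu<2Kr2^r$, whence $\mu(A_r)<2Kr^{-1-\varepsilon}$ for $A_r=\{G(r,x)>r^{2+\varepsilon}2^r\}$; summing these from $r_{\varepsilon,\delta}=1+\lceil(2K/(\varepsilon\delta))^{1/\varepsilon}\rceil$ gives the exceptional set. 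On the complement, Cauchy--Schwarz with $|B_j|\le r+1$ yields $\sum|F(i,s,x)|\le G^{1/2}|B_j|^{1/2}\le 2^{r/2}r^{3/2+\varepsilon/2}$, which is the source of the exponent $3/2+\varepsilon$ and of the constant $M_\varepsilon=\sqrt{2}/\log^{3/2+\varepsilon/2}2$. The factor $(\log 4/\log 3)^{3/2+\varepsilon}$ does \emph{not} come from a geometric ratio: it arises from the elementary inequality $(r+1)^{1/2}\log^{3/2+\varepsilon}(r+3)\le \sqrt{2}(\log4/\log3)^{3/2+\varepsilon}\,r^{1/2}\log^{3/2+\varepsilon}(r+2)$, used to pass from the anchor $n_{r+1}$ back to $n_r\le N$. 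The factor $\bigl(1+\tfrac{1}{\sqrt{2}\log^{3/2+\varepsilon}4}\bigr)$ absorbs the additive $+1$ from the overshoot estimate $\sum_{n_r+1}^{n_{r+1}}f_k<1+\max_k f_k$.

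So there is a real gap between your plan and the stated constants: a per-block Rademacher--Menshov argument followed by geometric telescoping along base-$4/3$ anchors would produce \emph{some} effective bound, but not the precise $K_{\varepsilon,\delta}$ in the theorem, and your identification of where each piece of $K_{\varepsilon,\delta}$ comes from is incorrect. If the goal is to match the statement exactly, you must switch to the integer-anchor, binary-decomposition argument (essentially Harman's original proof of Lemma~1.5) and track the constants through the $G(r,x)$--Cauchy--Schwarz step and the $r\mapsto r+1$ comparison lemma.
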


We will give the proofs of these theorems in Section 4.

\subsection{Proof of Theorem \ref{EST} modulo Theorem \ref{thm3}}
We now prove Theorem \ref{EST}. The proof given in this section is based on Section 4.1from \cite{harman1998metric}; the proof given there is an improvement based on Schmidt's original method.

Let $\psi:\mathbb{N}\to(0,1/2]$ be a non-increasing function. Suppose \eqref{sum} diverges. We begin by making the Lemmas 4.1, 4.2 and 4.3 from \cite{harman1998metric} effective.

\begin{lemma}[Effective Lemma 4.1 from \cite{harman1998metric}]\label{Lemma 4.1}
For any $M,N,k\in\mathbb{N}$, if $M<N$ then 
    \begin{align}\label{4.2.2}
        0\leq N-M+1-\sum_{n=M}^N\frac{\varphi(k,n)}{n}\leq\frac{N-M}{k}+\log{N}, 
    \end{align}
where 
    \begin{align*}
        \varphi(k,n)\coloneqq\sum_{\substack{1\leq m\leq n,\\\gcd{(m,n)}\leq k}}1. 
    \end{align*}
\end{lemma}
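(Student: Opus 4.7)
The plan is to expand $\varphi(k,n)$ using the standard partition of $\{1,\dots,n\}$ by $\gcd$ with $n$. Writing $\varphi(k,n)=\sum_{d\mid n,\,d\leq k}\varphi(n/d)$ and using $\sum_{d\mid n}\varphi(n/d)=n$, I obtain the identity
\[
1-\frac{\varphi(k,n)}{n}=\frac{1}{n}\sum_{\substack{d\mid n\\ d>k}}\varphi(n/d),
\]
from which the lower bound in \eqref{4.2.2} is immediate since every summand is non-negative.

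For the upper bound, I sum this identity over $n\in[M,N]$ and exchange the order of summation. Parametrising $n=dj$ with $d>k$ gives
\[
N-M+1-\sum_{n=M}^{N}\frac{\varphi(k,n)}{n}
=\sum_{d>k}\frac{1}{d}\sum_{\substack{M\leq dj\leq N}}\frac{\varphi(j)}{j}.
\]
Since the contribution is empty once $d>N$, the outer sum is effectively over $k<d\leq N$, and inside I use $\varphi(j)/j\leq 1$ to replace the inner sum by the number of multiples of $d$ in $[M,N]$, which is at most $(N-M)/d+1$.

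The remaining step is to split the resulting bound into two pieces, namely $(N-M)\sum_{k<d\leq N}d^{-2}$ and $\sum_{k<d\leq N}d^{-1}$. For the first I use the telescoping estimate $\sum_{d>k}d^{-2}\leq\sum_{d>k}\frac{1}{d(d-1)}=\frac{1}{k}$, and for the second the standard comparison with the integral gives $\sum_{k<d\leq N}d^{-1}\leq\log N-\log k\leq\log N$. Combining these yields $(N-M)/k+\log N$, matching \eqref{4.2.2}. I do not expect any real obstacle here; the only point that needs a little care is keeping the bound on the inner sum valid for all ranges of $d$ (in particular for $d$ close to $N-M$, where one uses the extra ``$+1$'' coming from the ceiling), and choosing the telescoping bound $1/(d(d-1))$ rather than an integral estimate so that the constant $1/k$ in the statement is attained exactly.
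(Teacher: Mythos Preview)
Your proof is correct and follows essentially the same route as the paper's argument. The only cosmetic difference is that you start from the exact identity $n-\varphi(k,n)=\sum_{d\mid n,\,d>k}\varphi(n/d)$ and then relax via $\varphi(j)/j\leq 1$, whereas the paper uses the cruder inequality $n-\varphi(k,n)\leq\sum_{d\mid n,\,d>k}n/d$ directly; from that point on the two computations (swapping the order of summation, bounding the number of multiples of $d$ in $[M,N]$ by $(N-M)/d+1$, and estimating $\sum_{d>k}d^{-2}\leq 1/k$ and $\sum_{k<d\leq N}d^{-1}\leq\log N$) coincide.
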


\begin{proof}
The lower bound follows from the fact that for any $k,n\in\mathbb{N}$, $\varphi(k,n) \leq n$. We now focus on the upper bound.

Notice that for any $k,n\in\mathbb{N}$, \begin{align*}
    \varphi(k,n)\geq n-\sum_{\substack{d\,|\,n\\d>k}}\sum_{\substack{m=1\\m\equiv0\,\operatorname{mod}d}}^n1.
\end{align*}

Thus, for any $M,N\in\mathbb{N}$, if $M<N$ then \begin{align*}
    \sum_{n=M}^N\frac{\varphi(k,n)}{n}
    &\geq\sum_{n=M}^N\left(1-\frac{1}{n}\sum_{\substack{d\,|\,n\\d>k}}\frac{n}{d}\right) \\
    &=N-M+1-\sum_{d=k+1}^N\left(\frac{1}{d}\sum_{\substack{n=M\\n\equiv0\operatorname{mod}d}}^N1\right) \\
    &\geq N-M+1-\sum_{d=k+1}^N\left(\frac{1}{d}\left(\frac{N-M}{d}+1\right)\right) \\
    &=N-M+1-\sum_{d=k+1}^N\frac{N-M}{d^2}-\sum_{d=k+1}^N\frac{1}{d}\\
    &\geq N-M+1-\frac{N-M}{k}-\log{N}
\end{align*}

\end{proof}

\begin{lemma}[Effective Lemma 4.2 from \cite{harman1998metric}]\label{Lemma 4.2}
Given an eventually non-increasing function $\psi :\mathbb{N} \to \mathbb{R}$, for any $M,N,k\in\mathbb{N}$, if $M<N$ then 
    \begin{align*}
        \left(1-\frac{1}{k}\right)\sum_{n=M}^{N} \psi(n)-\psi(M)\log{M}-\sum_{n=M}^{N}\frac{\psi(n)}{n}
        \leq \sum_{n=M}^N \frac{\psi(n)\varphi(k,n)}{n}
        \leq \sum_{n=M}^N\psi(n).
    \end{align*}
\end{lemma}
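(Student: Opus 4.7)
The plan is to prove the upper bound trivially and then obtain the lower bound by Abel summation, feeding in the partial-sum estimate from Lemma~\ref{Lemma 4.1}. The upper bound is immediate: since $\varphi(k,n)$ counts a subset of $\{1,\dots,n\}$ we have $\varphi(k,n)/n \leq 1$, so $\psi(n)\varphi(k,n)/n \leq \psi(n)$ and summing gives $\sum_{n=M}^N \psi(n)\varphi(k,n)/n \leq \sum_{n=M}^N \psi(n)$.

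For the lower bound, I would set $c_n \coloneqq 1 - \varphi(k,n)/n \geq 0$ and reduce to showing
\[\sum_{n=M}^{N} \psi(n) c_n \;\leq\; \frac{1}{k}\sum_{n=M}^N \psi(n) + \psi(M)\log M + \sum_{n=M}^N \frac{\psi(n)}{n}.\]
Let $S_n \coloneqq \sum_{m=M}^n c_m$. Applying Lemma~\ref{Lemma 4.1} with upper endpoint $n$ (taking $M$ large enough that $\psi$ is non-increasing on $[M,\infty)$ and $\log M \geq 1$, so the edge case $n=M$ is absorbed) yields $S_n \leq (n-M)/k + \log n$ for all $n \geq M$. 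Abel summation then gives
\[\sum_{n=M}^N \psi(n) c_n \;=\; \psi(N) S_N + \sum_{n=M}^{N-1} S_n\bigl(\psi(n) - \psi(n+1)\bigr),\]
and since $\psi(n) - \psi(n+1) \geq 0$ and $S_n \geq 0$, I can substitute the bound on $S_n$ and split into two pieces $A + B$ where $A$ collects the $(n-M)/k$ terms and $B$ collects the $\log n$ terms.

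A short discrete integration-by-parts computation telescopes $A$ to $\frac{1}{k}\sum_{n=M+1}^N \psi(n) \leq \frac{1}{k}\sum_{n=M}^N \psi(n)$. For $B$, an analogous rearrangement yields
\[B \;=\; \psi(M)\log M + \sum_{n=M+1}^N \log\!\left(\tfrac{n}{n-1}\right)\psi(n).\]
Then I would use $\log(1 + 1/(n-1)) \leq 1/(n-1)$ together with the monotonicity $\psi(n) \leq \psi(n-1)$ to shift the index of summation:
\[\sum_{n=M+1}^N \frac{\psi(n)}{n-1} \;=\; \sum_{m=M}^{N-1} \frac{\psi(m+1)}{m} \;\leq\; \sum_{m=M}^{N-1} \frac{\psi(m)}{m} \;\leq\; \sum_{n=M}^N \frac{\psi(n)}{n}.\]
Combining $A$ and $B$ produces exactly the claimed lower bound.

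The main obstacle is really just the careful bookkeeping of the two telescoping Abel-summation computations and correctly handling the boundary term at $n=M$ where Lemma~\ref{Lemma 4.1} is technically not applicable. This last point is harmless because the hypothesis only requires $\psi$ to be \emph{eventually} non-increasing, so we can assume $M$ is large enough that $\log M \geq 1$ and the trivial bound $c_M \leq 1 \leq \log M$ makes $S_n \leq (n-M)/k + \log n$ valid for every $n \geq M$.
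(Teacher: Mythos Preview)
Your proof is correct and follows essentially the same route as the paper's: both arguments use Abel summation together with the partial-sum estimate of Lemma~\ref{Lemma 4.1}, and then bound the logarithmic increment by $1/n$. Your framing via $c_n = 1 - \varphi(k,n)/n$ and the upper bound of Lemma~\ref{Lemma 4.1} is simply the complementary form of the paper's direct use of the lower bound on $\sum_{m=M}^n \varphi(k,m)/m$; the extra index shift you perform using monotonicity of $\psi$ is a minor bookkeeping difference, not a genuinely different idea.
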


\begin{proof}
Pick any $M,N,k\in\mathbb{N}$ with $M<N$. By partial summation, 
\begin{align}\label{partial summation}
    \sum_{n=M}^N\frac{\psi(n)\varphi(k,n)}{n}
    =\sum_{n=M}^{N-1}\left((\psi(n)-\psi(n+1))\sum_{m=M}^n\frac{\varphi(k,m)}{m}\right)+\psi(N)\sum_{m=M}^N\frac{\varphi(k,m)}{m}
\end{align}

We recall that $\frac{\varphi(k,\,n)}{n} \leq 1$, and applying this to the above and summing over $n$ we find that
\begin{align*}
    \sum_{n=M}^{N-1}\left((\psi(n)-\psi(n+1))\sum_{m=M}^n\frac{\varphi(k,m)}{m}\right)+\psi(N)\sum_{m=M}^N\frac{\varphi(k,m)}{m} \leq \sum_{n=M}^{N}\psi(N),
\end{align*}
which establishes the upper bound of the lemma.

To establish the lower bound, we consider the upper bound of \eqref{4.2.2} akin to the above, and we applying this to \eqref{partial summation}, we obtain that
\begin{align*}
    \sum_{n=M}^N\frac{\psi(n)\varphi(k,n)}{n}
    &=\sum_{n=M}^{N-1}\left((\psi(n)-\psi(n+1))\sum_{m=M}^n\frac{\varphi(k,m)}{m}\right)+\psi(N)\sum_{m=M}^N\frac{\varphi(k,m)}{m} \\
    &\geq \sum_{n=M}^{N-1}\left(\left(\psi(n)-\psi(n+1)\right)(n-M+1-\frac{n-M}{k}\log n)\right) \\
    &=\sum_{n=M}^{M}\psi(n)-\frac{1}{k}\sum_{m=M+1}^{N}\psi(m)-\psi(M) \log M +\sum_{l=M+1}^{N}\psi(l)\left(\log l-\log(l+1)\right) \\
    &\geq \left(1-\frac{1}{k}\right)\sum_{n=M}^{N} \psi(n)-\psi(M)\log{M}-\sum_{n=M}^{N}\left(\frac{\psi(n)}{n}\right),
\end{align*}

where the last line follows as $\log l -\log (l+1)>-\frac{1}{l}$ for all positive $l \in \mathbb{N}$
\end{proof}

We now give some definitions before stating and proving Lemma \ref{Lemma 4.3}. For any $N\in\mathbb{N}$, define
\begin{align}
    \Psi(N)    &\coloneqq\sum_{n=1}^N\psi(n), \nonumber\\
    \Gamma(N)  &\coloneqq\Psi^2(N)+1        \geq1, \nonumber\\
    L(N)       &\coloneqq\log{(3\Gamma(N))} \geq\log{3}>1,\nonumber\\
    L_2(N)     &\coloneqq\log{(2L(N))}      \geq\log{(2\log{3})}>1/2, \nonumber\\
    \varPhi(N) &\coloneqq\sum_{\substack{1\leq m\leq N,\\\gcd{(m,N)}\leq \Gamma(N)}}1\leq N. \label{PHI DeFN}
\end{align}

\begin{lemma}[Effective Lemma 4.3 from \cite{harman1998metric}]\label{Lemma 4.3}
    For any $N\in\mathbb{N}$, 
    \begin{align*}
        0\leq\sum_{n=1}^N\psi(n)\left(1-\frac{\varPhi(n)}{n}\right)\leq 40.6 L(N)L_2(N). 
    \end{align*}
\end{lemma}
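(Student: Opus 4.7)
The lower bound is immediate: by its definition $\varPhi(n)$ counts a subset of $\{1,\ldots,n\}$, so $\varPhi(n)\leq n$ and each summand $\psi(n)(1-\varPhi(n)/n)$ is non-negative.

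For the upper bound, my plan is to exploit the monotonicity of $\Gamma$ (which follows from $\psi\geq 0$, whence $\Psi$ and therefore $\Gamma$ are non-decreasing) by partitioning $\{1,\ldots,N\}$ into blocks $[M_j,M_{j+1})$ and applying Lemma \ref{Lemma 4.2} on each with $k=\Gamma(M_j)$. Specifically, $\Gamma(n)\geq \Gamma(M_j)$ on the block gives $\varPhi(n)=\varphi(\Gamma(n),n)\geq \varphi(\Gamma(M_j),n)$, hence
\[
\sum_{n=M_j}^{M_{j+1}-1}\psi(n)\left(1-\frac{\varPhi(n)}{n}\right)
\leq \sum_{n=M_j}^{M_{j+1}-1}\psi(n)-\sum_{n=M_j}^{M_{j+1}-1}\frac{\psi(n)\varphi(\Gamma(M_j),n)}{n}.
\]
Applying the lower bound of Lemma \ref{Lemma 4.2} to the subtracted sum produces a per-block estimate of shape $\frac{1}{\Gamma(M_j)}\sum_{n}\psi(n)+\psi(M_j)\log M_j+\sum_{n}\psi(n)/n$.

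The natural choice of partition is $M_0=1$ and $M_{j+1}$ the smallest integer exceeding $M_j$ with $\Psi(M_{j+1})>2\Psi(M_j)$, which gives $J=O(L(N))$ blocks since $\Psi(M_J)\geq 2^{J}\Psi(1)$ and $L(N)\asymp \log\Psi(N)$. Summing the three contributions over $j$:
\textbf{(i)} the first sum is a geometric series of size $O(1)$, because $\Gamma(M_j)\asymp \Psi(M_j)^{2}$ grows four-fold per block while $\Psi(M_{j+1})-\Psi(M_j)\leq \Psi(M_j)$;
\textbf{(ii)} the third sum telescopes to $\sum_{n=1}^N \psi(n)/n$, controllable via Abel summation together with $\psi\leq 1/2$;
\textbf{(iii)} the second sum is estimated using the elementary inequality $\psi(M)\log M\leq \sum_{n=1}^{M}\psi(n)/n$ (consequence of $\psi(M)\leq \psi(n)$ for $n\leq M$ and $\log M\leq H_M$), bounding it by $J\cdot \sum_{n=1}^N \psi(n)/n$.

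The hard part will be extracting the sharp constant $40.6$ and, more seriously, showing the bound has order $L(N)L_2(N)$ rather than the $L(N)^2$ that a naive combination of the estimates above yields. To bridge this gap I expect to need either a refined partition (e.g., coarsening at the $L_2$-scale, so that one collects $O(L(N)/L_2(N))$ outer blocks each contributing $O(L_2(N)^2)$) or a sharper tail estimate on $\sum_{n=1}^N\psi(n)/n$ that extracts the $\log\log$ savings invisible to Abel summation alone. The small-$\Psi$ regime where $\Psi(N)\leq 40.6\, L(N)L_2(N)$ is disposed of by the trivial bound $\sum_n\psi(n)(1-\varPhi(n)/n)\leq \Psi(N)$, so all refined estimates may be performed under the assumption that $\Psi(N)$ is sufficiently large; careful bookkeeping of the resulting constants, together with the coefficients arising from Lemma \ref{Lemma 4.2}, should deliver $40.6$.
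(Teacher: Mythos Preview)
Your skeleton is correct: partition $\{1,\dots,N\}$ into blocks, apply the lower bound of Lemma~\ref{Lemma 4.2} on each block with $k$ equal to the value of $\Gamma$ at the left endpoint, and sum the three resulting contributions. You also correctly diagnose the obstruction, namely that your doubling partition gives $O(L(N))$ blocks and hence $O(L(N)^2)$ in the end. What is missing are two concrete ingredients, and your tentative fixes (a two-scale coarsening, or a mysterious $\log\log$ saving from Abel summation) are not the ones that work.

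First, the partition should be \emph{doubly exponential} in the $\Psi$-scale. The paper takes $M_j=2^{2^j}$ and blocks $\{n:\,M_r\leq\Psi(n)<M_{r+1}\}$, so the number of blocks is $h\asymp\log\log\Psi(N)=O(L_2(N))$, not $O(L(N))$. On each block one applies Lemma~\ref{Lemma 4.2} with $k=M_r^2+1\leq\Gamma(n)$. The term $\frac{1}{k}\sum_{\text{block}}\psi(n)$ is then at most $M_{r+1}/(M_r^2+1)\leq 1+o(1)$ per block, summing to $O(h)=O(L_2(N))$; your doubling partition makes this term $O(1)$ but at the cost of too many blocks elsewhere.

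Second, the bound on $\sum_{n=1}^N\psi(n)/n$ is not obtained by Abel summation (which gives only $O(\log N)$, possibly much larger than $L(N)$ when $\Psi(N)$ grows slowly). The paper uses H\"older's inequality with exponents $(1+L(N),\,1+1/L(N))$:
\[
\sum_{n=1}^N\frac{\psi(n)}{n}
\leq\Bigl(\sum_{n=1}^N\psi(n)\Bigr)^{1/(1+L(N))}\Bigl(\sum_{n=1}^N n^{-1-1/L(N)}\Bigr)^{L(N)/(1+L(N))}
\leq 8\,\Psi(N)^{1/L(N)}L(N)\leq 8\sqrt{e}\,L(N),
\]
the last step because $\Psi(N)^{1/L(N)}\leq\sqrt{e}$. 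With these two ingredients, term (ii) becomes $h\cdot\sum\psi(n)/n=O(L_2(N))\cdot O(L(N))$, and careful arithmetic yields the constant $40.6$. Your inequality $\psi(M)\log M\leq\sum_{n\leq M}\psi(n)/n$ is exactly the device the paper uses for term (ii), so once you switch to the doubly exponential partition and insert the H\"older bound you will recover the stated estimate.
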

\begin{proof} The first inequality follows directly from \eqref{PHI DeFN}, as for any $N\in\mathbb{N}$, 
\begin{align*}
    \sum_{n=1}^N\frac{\psi(n)\varPhi(n)}{n}\leq\sum_{n=1}^N\psi(n).
\end{align*}
The rest of the proof deals with the second inequality. Pick any $N\in\mathbb{N}$. Define $M_0\coloneqq0$ and let
\begin{align*}
    M_j &\coloneqq 2^{2^j}, \\
    h   &\coloneqq \min\{j\in\mathbb{N}:\Psi(N)<M_j\}\geq1,\\
    V_r &\coloneqq \min\{V\in\mathbb{N}\cup\{0\}:\Psi(V)\geq M_r\}. 
\end{align*}

Suppose initially that $h\geq 2$. By the definition of $h$ we have that 
\[2^{h-1}\log{2}=\log{M_{h-1}}\leq\log{\Psi(N)}<0.5L(N).\]
It follows that $2^{h+1}\log{2}<2L(N)$, so upon taking logs, we obtain that
\begin{align*}
    (h+1)\log{2}+\log{\log{2}}&<L_2(N) \\
    h&<\frac{L_2(N)-\log{\log{2}}}{\log{2}}-1 <\frac{L_2(N)}{\log{2}}
\end{align*}
and, from the lower bound that $L_{2}(N)\leq\log{(2\log{3})}$ that,
\begin{align*}
    3+h&<\left(\frac{3}{\log{(2\log{3})}}+\frac{1}{\log{2}}\right)L_2(N). 
\end{align*}
In the case that $h=1$, these inequalities are trivially true.

We consider the terms in the right hand side of the above, noting that if $M_r\leq\Psi(n)$ then ${M_r}^2+1\leq\Gamma(n)$ and $\varphi({M_r}^2+1,n)\leq\varPhi(n)$. Applying these and Lemma \ref{Lemma 4.2}, it follows that
\begin{align*}
    \sum_{n=1}^N\frac{\psi(n)\varPhi{(n)}}{n}
    &\geq\mathop{\sum_{0\leq r\leq h-1}\sum_{1\leq n\leq N}}_{M_r\leq\Psi(n)<M_{r+1}}\frac{\psi(n)\varphi({M_r}^2+1,n)}{n} \\
    &\geq\sum_{n=1}^N\psi(n)-\left(\sum_{n=1}^N\frac{\psi(n)}{n}+\sum_{r=0}^{h-1}\left(\frac{1}{{M_r}^2+1}\sum_{\substack{1\leq n\leq N\\M_r\leq\Psi(n)<M_{r+1}}}\psi(n)+\psi(V_r)\log{V_r}\right)\right).
\end{align*}

Considering the sums in the final line of the above, we note that 
\begin{align*}
    \sum_{r=0}^{h-1}\left(\frac{1}{{M_r}^2+1}\sum_{\substack{1\leq n\leq N, \\M_r\leq\Psi(n)<M_{r+1}}}\psi(n)\right)<\sum_{r=0}^{h-1}\frac{M_{r+1}}{{M_r}^2+1}<3+h<\left(\frac{3}{\log{(2\log{3})}}+\frac{1}{\log{2}}\right)L_2(N), 
\end{align*}
and the assumption that $\psi$ is non-increasing gives us that
\begin{align*}
    \sum_{r=0}^{h-1}\psi(V_r)\log{V_r}
    &\leq \sum_{r=0}^{h-1}\left(\psi(V_r)\sum_{n=1}^{V_r}\frac{1}{n}\right) 
    \leq \sum_{n=1}^N\left(\frac{1}{n}\sum_{r=1,\,V_r>n}^{h-1}\psi(V_r)\right) \\
    &\leq \sum_{n=1}^N\frac{\psi(n)}{n}h 
    \leq \frac{L_2(N)}{\log{2}}\sum_{n=1}^N\frac{\psi(n)}{n}.
\end{align*}

Applying H\"{o}lder's inequality, as $1/L(N)<1$ we obtain that
\begin{align*}
    \sum_{n=1}^N\frac{\psi(n)}{n}
    &\leq\left(\sum_{n=1}^N\psi(n)^{1+L(N)}\right)^{1/(1+L(N))}\left(\sum_{n=1}^N\frac{1}{n^{1+1/L(N)}}\right)^{L(N)/(1+L(N))} 
\end{align*}
We recall that for any $\varepsilon<1,$
\[\sum_{n=1}^{N}n^{-1-\varepsilon}<2\int_{1/2}^{\infty}\frac{1}{x^{1+\varepsilon}}\textrm{d}x<\frac{8}{\varepsilon}.\]
Substituting this into the above, with $\varepsilon=1/L(N)$, we obtain that 
\begin{align*}
    \sum_{n=1}^N\frac{\psi(n)}{n}&\leq8\Psi(N)^{1/L(N)}L(N)\leq8\sqrt{e}L(N).
\end{align*}

The above follows as, letting $x=\Psi(N)$, we note that $\Psi(N)^{1/L(N)}$ is of the form $ x^{1/\log{(3x^2+3)}}$. By standard methods in calculus, we note that for any $x\geq0$, \begin{align*}
    x^{1/\log{(3x^2+3)}}\leq\sqrt{e}. 
\end{align*}

Combining the results above, we get that
\begin{align*}
    \sum_{n=1}^N\psi(n)\left(1-\frac{\varPhi(n)}{n}\right)
    &\leq 8\sqrt{e}L(N)+\left(\frac{3}{\log{(2\log{3})}}+\frac{1}{\log{2}}\right)L_2(N)+\frac{1}{\log{2}}L_2(N)8\sqrt{e}L(N) \\
    &\leq \left(\frac{8\sqrt{e}}{\log{(2\log{3})}}+\frac{3}{\log{3}\log{(2\log{3})}}+\frac{1}{\log{2}\log{3}}+\frac{8\sqrt{e}}{\log{2}}\right)L(N)L_2(N). 
\end{align*} 
The result then follows by computing the value 
\begin{align*}
    8\sqrt{e}\left(\frac{1}{\log\left(2\log3\right)}+\frac{1}{\log{2}}\right)+\frac{3}{\log{3}\log{(2\log3)}}+\frac{1}{\log{2}\log{3}}=40.56633883...<40.6. 
\end{align*}
\end{proof}

We are now in a position to prove Theorem \ref{EST}.

\begin{proof}[Proof of Theorem \ref{EST}] 
Suppose Theorem \ref{thm3} is true.

Denote by $\mu$ the Lebesgue measure on $[0,1)$ and let $S^*(\alpha,u,v)$ be the number of solutions $n\in\mathbb{N}$ to 
\begin{align*}
    |\alpha n-m|<\psi(n),\quad \gcd{(m,n)}\leq \Gamma(n),\quad u<n\leq v.
\end{align*}

We now compute
\[\int_{[0,1)}S^{*}(\alpha,0,N)\,\textrm{d}\alpha\]
as follows. For any $u,v\in\mathbb{N}$, if $u<v$ then
\begin{align*}
    \int_{[0,1)}S^*(\alpha,u,v)\,\textrm{d}\alpha=\sum_{n=u+1}^v\int_{[0,1)}S^*(\alpha,n-1,n)\,\textrm{d}\alpha=2\sum_{n=u+1}^v\psi(n)\frac{\varPhi(n)}{n}.
\end{align*}

We recall that, by the definition of $S$, we have  that,
\[\int_{[0,1)} S(\alpha,\,n) \,\textrm{d}\alpha = \psi(n).\]
Notice that for any $N\in\mathbb{N}$, $S(\alpha,N)\geq S^*(\alpha,0,N)$. By Lemma \ref{Lemma 4.3}, we get that for any $v\in\mathbb{N}$,
\begin{align*}
    \int_{[0,1)}\left(S(\alpha,v)-S^*(\alpha,0,v)\right)\,\textrm{d}\alpha=2\sum_{n=1}^{v}\psi(n)\left(1-\frac{\varPhi(n)}{n}\right)\leq 81.2L(v)L_2(v). 
\end{align*}

For any $n\in\mathbb{N}$, define $G:\mathbb{N}\to\mathbb{R}^+$ by $G(n)=L(n)/L_2(n)$. Thus, for any $N\in\mathbb{N}$,
\begin{align*}
    \mu\left(\left\{\alpha\in[0,1):S(\alpha,N)-S^*(\alpha,0,N)>L^2(N)\right\}\right)\leq \frac{81.2}{G(N)}. 
\end{align*}

For any $j\in\mathbb{N}\cup\{0\}$, let $v_j=\min{\{n\in\mathbb{N}:L(n)\geq2^j\}}$. Then 
\begin{align*}
    \sum_{j=0}^\infty \frac{1}{G(v_j)}<+\infty. 
\end{align*}

Hence, by the Borel-Cantelli Lemma,  for almost every $\alpha \in[0,1)$ and $j>j(\alpha)$,
\begin{align*}
    0<S(\alpha,v_j)-S^*(\alpha,0,v_j)<L^2(v_j). 
\end{align*}

Note that 
\[2^{j-1}\leq L(v_{j-1}) \leq L(N)\leq 2^{j} \leq L(v_{j}) \leq 2^{j+1}.\]
It follows that $4L(N)\geq 2^{j+1} \geq L(v_{j})$, so $16L^{2}(N) \geq L^{2}(v_{j})$.
Thus, for almost every $\alpha\in[0,1)$, 
\begin{align} \label{|S-S^*|}
    0<S(\alpha,N)-S^*(\alpha,0,N)\leq 16L^2(N). 
\end{align}


We now study $S^*(\alpha,0,N)$. We are going to apply Theorem \ref{thm3} on $S^*(\alpha,0,N)$. Pick any $u,v\in\mathbb{N}$ with $u<v$. Notice that for any $n\in\mathbb{N}$, \[1-\frac{\varPhi(n)}{n}\leq1\leq 2L(n)L_2(n).\] 

Let
\begin{align*}
    d^*(n)   &\coloneqq\sum_{d\,|\,n,\,1\leq d\leq\Gamma(n)}1 \\
    \Psi(u,v)&\coloneqq\sum_{u<n\leq v}\psi(n).
\end{align*}
Then
\begin{align*}
    \Psi(u,v)\sum_{n=u+1}^v\psi(n)\left(1-\frac{\varPhi(n)}{n}\right)
    &=\sum_{n=u+1}^n\psi(n)\left(1-\frac{\varPhi(n)}{n}\right)+\sum_{r=u+1}^v\psi(r)\left(1-\frac{\varPhi(r)}{r}\right)\sum_{m=u+1}^v\psi(m)\\
    &\leq\sum_{n=u+1}^v2\psi(n)L(n)L_2(n)+\sum_{r=u+1}^v\psi(r)\left(1-\frac{\varPhi(r)}{r}\right)\Psi(r). 
\end{align*}

Hence, the assumptions of Theorem 3 are satisfied, as by Lemma 4.4 and inequality (4.2.10) in \cite{harman1998metric}, 
\begin{align*}
    \int_{[0,1)}\left(S^*(\alpha,u,v)-2\Psi(u,v)\right)^2\textrm{d}\alpha
    &\leq4\sum_{u<n\leq v}d^*(n)\psi(n)+4\Psi(u,v)\sum_{u<n\leq v}\psi(n)\left(1-\frac{\varPhi(n)}{n}\right) \\
    &\leq4\sum_{u<n\leq v}\left(d^*(n)+2L(n)L_2(n)+\left(1-\frac{\varPhi(n)}{n}\right)\Psi(n)\right)\psi(n), 
\end{align*}

By Theorem \ref{thm3}, for any $\varepsilon>0$ and $\delta>0$ there exists a measurable $E_{\varepsilon,\delta}\subset[0,1)$ such that $\mu(E_{\varepsilon,\delta})<\delta$ and for any $x\in[0,1)\setminus E_{\varepsilon,\delta}$ and $N\in\mathbb{N}$, \begin{align*}
    \left|S^*(\alpha,0,N)-2\Psi(N)\right|\leq K_{\varepsilon,\delta,0}\left({\Psi_1}^{1/2}(N)\log^{3/2+\varepsilon}{\Psi_1(N)}+\frac{1}{2}\right)
, \end{align*}
where by Lemma \ref{Lemma 4.3}, with $\psi_0\coloneqq\psi(1)>0$, we have
\begin{align*}
    K_{\varepsilon,\delta,0}&=\frac{4}{\log^{2\varepsilon/3}{(\psi_0+2)}}\left(\frac{\log{4}}{\log{3}}\right)^{1+\varepsilon}\left(4+\frac{1+\varepsilon}{\log{3}}+\frac{1}{4\log^{1+\varepsilon}{4}}\right) \\
    &\leq26(\varepsilon+1)\left(\frac{\log{4}}{\log{3}\log^{2/3}{2}}\right)^\varepsilon 
    \leq26(\varepsilon+1)1.75^\varepsilon,\\
    \Psi_1(N)
    &\coloneqq\sum_{n=1}^N\left(d^*(n)+2L(n)L_2(n)+\left(1-\frac{\varPhi(n)}{n}\right)\Psi(n)\right)\psi(n)\\
    &\leq\sum_{n=1}^Nd^*(n)\psi(n)+42.6\Psi(N)L(N)L_2(N).
\end{align*}

Notice that for any $N\in\mathbb{N}$, 
\begin{align*}
    \sum_{n=1}^Nd^*(n)\psi(n)
    &=\sum_{n=1}^N\psi(n)\sum_{d\,|\,n,\,d\leq\Gamma(n)}1 = \sum_{d=1}^{\Gamma(N)}\sum_{n=1}^{N/d}\psi(kd)\\
    &\leq\sum_{d\leq\Gamma(N)}\frac{\Psi(N)}{d} \\
    &\leq\Psi(N)\log{(3\Gamma(N))}=\Psi(N)L(N)\\
    &\leq\frac{1}{\log{(2\log{3})}}\Psi(N)L(N)L_2(N). 
\end{align*}

Thus we find that
\begin{align*}
    \Psi_1(N)
    \leq \left(42.6+\frac{1}{\log{(2\log{3})}}\right)\Psi(N)L(N)L_2(N)
    <44\Psi(N)L(N)L_2(N),
\end{align*}
and taking 
\begin{align*}
    \psi_0'=\Psi_1(1)
    \leq44\psi(1)\log\left(3(\psi^2(1)+1)\right)\log{(2\log\left({3(\psi^2(1)+1)}\right))}
    \leq28.268, 
\end{align*}
we find that
\begin{align*}
    {\Psi_1}^{1/2}(N)\log^{3/2+\varepsilon}{\Psi_1(N)}+\frac{1}{2}
    &\leq\frac{{\psi_0'}^{1/2}\log^{3/2+\varepsilon}{\psi_0'}+1/2}{{\psi_0}^{1/2}\log^{2+\varepsilon}{(\psi_0+1)}}{\Psi}^{1/2}(N)\log^{2+\varepsilon}{(\Psi(N)+1)} \\
    &\leq\frac{33(4)^\varepsilon}{{\psi_0}^{1/2}\log^{2+\varepsilon}{(\psi_0+1)}}{\Psi}^{1/2}(N)\log^{2+\varepsilon}{(\Psi(N)+1)}.
\end{align*}

Hence, by the triangle inequality and \eqref{|S-S^*|}, 
\begin{align*}
    |S(\alpha,N)-2\Psi(N)|
    &\leq|S(\alpha,N)-S^*(\alpha,0,N)|+|S^*(\alpha,0,N)-2\Psi(N)| \\
    &\leq16L^2(N)+\frac{33(4)^\varepsilon K_{\varepsilon,\delta,0}}{{\psi_0}^{1/2}\log^{2+\varepsilon}{(\psi_0+1)}}{\Psi}^{1/2}(N)\log^{2+\varepsilon}{(\Psi(N)+1)}\\
    &\leq\left(\frac{16(\log{(3{\psi_0}^2+3)})^2}{{\psi_0}^{1/2}\log^{2+\varepsilon}{(\psi_0+1)}}+\frac{33(4)^\varepsilon K_{\varepsilon,\delta,0}}{{\psi_0}^{1/2}\log^{2+\varepsilon}{(\psi_0+1)}}\right){\Psi}^{1/2}(N)\log^{2+\varepsilon}{(\Psi(N)+1)} \\
    &\leq\frac{28+33(4)^\varepsilon K_{\varepsilon,\delta,0}}{{\psi_0}^{1/2}\log^{2+\varepsilon}{(\psi_0+1)}}{\Psi}^{1/2}(N)\log^{2+\varepsilon}{(\Psi(N)+1)} \\
    &\leq K_{\varepsilon,\delta}{\Psi}^{1/2}(N)\log^{2+\varepsilon}{(\Psi(N)+1)}.
\end{align*}
This proves the theorem by the established upper bound for $K_{\varepsilon,\delta,0}$. 
\end{proof}

\section{Further Applications}
In this section we give a range of further applications of Theorems \ref{thm2} and \ref{thm3}. We begin by returning to our discussion of the Koukoulopoulos-Maynard Theorem (Duffin-Schaeffer Conjecture). 

\subsection{Quantitative Koukoulopoulos-Maynard Theorem}

Expanding on the work of Koukoulopoulos and Maynard, an asymptotic relation for the quantitative Duffin-Schaeffer type problem is given by Aistleitner, Borda and Hauke in \cite{aistleitner2022metric}; that is, the paper gives an asymptotic description of the number of solutions satisfying \eqref{psi approximable} with the extra condition that $\gcd{(p,q)}=1$. They proved the following theorem.

\begin{thm*}[Aistleitner, Borda, Hauke, 2022] 
    Let $\psi:\mathbb{N} \to [0,1/2]$. Suppose \eqref{sum'} diverges and let $C>0$ be arbitrary. Then for almost every $x\in[0,1)$, we have that
\begin{align}\label{Aistleitner}
    S'(x,Q)=\Psi'(Q)\left(1+O_{\psi,C,x}\left(\frac{1}{\left(\log(\Psi'(Q))\right)^{C}}\right)\right)
, \end{align}
as $Q\to\infty$, where   
\begin{align} \label{definition of S'}
    S'(x,Q)
    &\coloneqq\#\left\{q\in\mathbb{N}\cap[1,Q]:\left|x-\frac{p}{q}\right|<\frac{\psi(q)}{q}\textrm{, for some }p\in\mathbb{N}\textrm{ such that }\gcd{(p,q)}=1.\right\}, 
\end{align}
    and $\Psi'(Q)$ is defined to be
\begin{align*}
    \label{definition of Psi'}
      \Psi'(Q)
    &\coloneqq2\sum_{q=1}^Q\frac{\psi(q)\varphi(q)}{q}.   
\end{align*}
\end{thm*}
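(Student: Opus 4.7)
The plan is to cast $S'(x,Q)$ as a sum of indicator functions $\sum_{q=1}^Q f_q(x)$ and apply Theorem \ref{thm3}. For each $q\in\mathbb{N}$, let
\[
A_q \;=\; \bigcup_{\substack{0\leq p\leq q\\ \gcd(p,q)=1}} \left(\tfrac{p}{q}-\tfrac{\psi(q)}{q},\, \tfrac{p}{q}+\tfrac{\psi(q)}{q}\right)\cap[0,1),
\]
set $f_q(x)=\mathbf{1}_{A_q}(x)$, and put $\varphi_q = 2\psi(q)\varphi(q)/q$. Since $\psi(q)\leq 1/2$, the $\varphi(q)$ intervals comprising $A_q$ are pairwise disjoint, so $\int_0^1 f_q\, d\mu=\varphi_q$, and with $\varPhi(Q):=\sum_{q\leq Q}\varphi_q$ one has $\varPhi(Q)=\Psi'(Q)$ and $\sum_{q=1}^Q f_q(x)=S'(x,Q)$ by the definition \eqref{definition of S'}. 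Divergence of the series $\sum_q \psi(q)\varphi(q)/q$ guarantees $\varPhi(Q)\to\infty$.

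To apply Theorem \ref{thm3} one must verify its second-moment hypothesis \eqref{1.2.7.a}: for every $m<n$ there exists a universal $K$ with
\begin{align*}
\int_0^1 \left(\sum_{q=m+1}^n (f_q(x)-\varphi_q)\right)^2 d\mu(x) \;\leq\; K\bigl(\varPhi(n)-\varPhi(m)\bigr).
\end{align*}
Expanding the square, the diagonal contribution $\sum_{m<q\leq n}(\varphi_q-\varphi_q^2)$ is immediately controlled by $\varPhi(n)-\varPhi(m)$; the off-diagonal part is the correlation sum $2\sum_{m<q<q'\leq n}\bigl(\mu(A_q\cap A_{q'})-\varphi_q \varphi_{q'}\bigr)$. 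This is precisely the quantity whose control underlies the resolution of the Duffin–Schaeffer conjecture: the Koukoulopoulos–Maynard GCD-graph iterative compression, in its quantitative form, yields $\sum_{q,q'\in I}\mu(A_q\cap A_{q'}) \ll \bigl(\sum_{q\in I}\varphi_q\bigr)^2$ uniformly across dyadic subintervals $I\subset (m,n]$, and summing these dyadic estimates extracts the desired $K$.

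Granted this bound, Theorem \ref{thm3} supplies, for each $\varepsilon,\delta>0$, an exceptional set $E_{\varepsilon,\delta}\subset[0,1)$ with $\mu(E_{\varepsilon,\delta})<\delta$ outside of which
\begin{align*}
|S'(x,Q)-\Psi'(Q)| \;\leq\; K_{\varepsilon,\delta}\left(\Psi'(Q)^{1/2}\log^{3/2+\varepsilon}\Psi'(Q)+1\right).
\end{align*}
Since $\Psi'(Q)^{1/2}$ outgrows every fixed power of $\log\Psi'(Q)$, for any fixed $C>0$ this bound is $o\bigl(\Psi'(Q)/\log^C\Psi'(Q)\bigr)$, which is strictly stronger than what \eqref{Aistleitner} claims. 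Taking a sequence $\delta_k\downarrow 0$ and discarding $\bigcap_k E_{\varepsilon,\delta_k}$ (which has measure zero) delivers the a.e.\ conclusion with an implicit constant depending on $\psi$, $C$, and $x$.

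The principal obstacle is the off-diagonal second-moment estimate. The naive bound on $\mu(A_q\cap A_{q'})$ via coincidence counting in terms of $\gcd(q,q')$ is far too weak to sum to $O(\varPhi(n)-\varPhi(m))$, and it is precisely this failure that the iterative GCD-graph compression of Koukoulopoulos and Maynard is designed to overcome. Careful bookkeeping through their argument is required to pin down a $K$ that is truly uniform in $m,n$ (or at worst polylogarithmic, whose growth is absorbed by the final error term). Once this analytic ingredient is in hand, everything else is a routine application of the effective quantitative Borel–Cantelli machinery of Theorem \ref{thm3}; alternatively, Theorem \ref{thm2} could be substituted, yielding an error of order $\Psi'(Q)^{2/3}\log^{1/3+\varepsilon}$, which is again $o(\Psi'(Q)/\log^C\Psi'(Q))$.
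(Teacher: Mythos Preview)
Your setup with indicator functions $f_q=\mathbf{1}_{A_q}$ and $\varphi_q=2\psi(q)\varphi(q)/q$ matches the paper's, but the proof breaks down at the second-moment estimate. You claim that the quantitative Koukoulopoulos--Maynard machinery yields
\[
\int_0^1\Bigl(\sum_{m<q\le n}(f_q(x)-\varphi_q)\Bigr)^2 d\mu \;\le\; K\bigl(\varPhi(n)-\varPhi(m)\bigr),
\]
i.e.\ a variance bound \emph{linear} in the mean. This is not what Aistleitner--Borda--Hauke prove, and it is not known. Their Theorem~2 gives only
\[
\sum_{q,r\le Q}\mu(A_q\cap A_r)-\Psi'(Q)^2 \;=\; O_C\!\left(\frac{\Psi'(Q)^2}{(\log\Psi'(Q))^C}\right),
\]
a variance bound of order $\Psi'(Q)^2/(\log\Psi'(Q))^C$, which is far weaker. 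Your dyadic argument does not bridge this gap: from $\sum_{q,q'\in I}\mu(A_q\cap A_{q'})\ll(\sum_{q\in I}\varphi_q)^2$ on blocks $I$ you cannot extract a bound linear in $\sum_{q\in I}\varphi_q$ on the centred second moment; that would require genuine quasi-independence $\mu(A_q\cap A_{q'})\le(1+o(1))\varphi_q\varphi_{q'}$ on average, which the GCD-graph method does not deliver.

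This is why the paper does \emph{not} use Theorem~\ref{thm3} (or Theorem~\ref{thm2}) here, but instead invokes a separate tool, Lemma~\ref{Aist general thm}, specifically designed for the hypothesis $\int(\sum f_q-\varphi_q)^2\,d\mu=O(\Psi(Q)^2/(\log\Psi(Q))^C)$ and yielding the correspondingly weaker error $O(\Psi'(Q)/(\log\Psi'(Q))^C)$. Your approach would, if it worked, produce an error $O(\Psi'(Q)^{1/2}\log^{3/2+\varepsilon}\Psi'(Q))$, strictly stronger than what Aistleitner--Borda--Hauke themselves obtain; that alone should signal that the required variance input is not available.
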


Using the tools from the previous section, we are able to give an effective version of this theorem.
\begin{thm}[Effective Aistleitner-Borda-Hauke Theorem]\label{effective solution counting}
Let $\psi:\mathbb{N}\to[0,1/2]$ be a function and let $C>0$ be arbitrary. Suppose \eqref{sum'} diverges. Then there exists a measurable $E_{C,\,\delta}\subset[0,1)$ such that $\mu\left(E_{C,\,\delta}\right)< \delta$, and for any $x \in [0,\,1) \setminus E_{C,\,\delta}$ we have that
    \[\left|S'(x,Q)- \Psi'(Q)\right|\leq \max\left\{\frac{k_{C,\,\delta}}{2},\,\frac{2e\Psi'(Q)+1 }{\left(\log \Psi'\left(Q\right)\right)^{C}}+\frac{1}{2}\right\},\]
up to a constant depending only on $C$ found following the proof of Theorem 2 from \cite{aistleitner2022metric}, where $S'(x,Q)$ and $\Psi'(Q)$ are given above, and $k_{C,\,\delta}$ is given in \eqref{k_EABHT}.
\end{thm}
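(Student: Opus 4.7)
My plan is to reduce Theorem \ref{effective solution counting} to a direct application of Theorem \ref{thm3}, in the same spirit as the proof of Theorem \ref{EST} but with the role of Schmidt's auxiliary lemmas replaced by the variance estimate underlying the proof of Theorem 2 of \cite{aistleitner2022metric}. First, I would set
\[f_k(x) = \mathbf{1}\!\left\{x \in [0,1) : \left|x - \frac{p}{k}\right| < \frac{\psi(k)}{k} \textrm{ for some } p \in \mathbb{N} \textrm{ with } \gcd(p,k)=1\right\}\]
and $f_k = 2\psi(k)\varphi(k)/k$, so that $\sum_{k=1}^Q f_k(x) = S'(x,Q)$ and $\sum_{k=1}^Q f_k = \Psi'(Q)$. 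Since $f_k(x) \in \{0,1\}$, the uniform bound $C=1$ suffices in the notation of Theorem \ref{thm3}; taking $\varphi_k = f_k \in [0,1]$, the divergence of \eqref{sum'} guarantees $\varPhi(N) = \Psi'(N) \to \infty$.

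The main input is the partial-sum variance bound
\[\int_{[0,1)} \left( \sum_{k=m+1}^n \bigl(f_k(x) - f_k\bigr) \right)^2 d\mu(x) \leq K_C \bigl(\Psi'(n) - \Psi'(m)\bigr),\]
with explicit constant $K_C$ depending only on $C$; this is precisely what is produced, implicitly, in the proof of Theorem 2 of \cite{aistleitner2022metric}, where the core overlap estimate rests on the Koukoulopoulos--Maynard GCD graph framework. Translating their overall variance bound into the partial-sum shape \eqref{1.2.7.a} amounts to verifying translation invariance in the index variable and carefully tracking the constant, both of which are available from their argument.

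With \eqref{1.2.7.a} verified, Theorem \ref{thm3} delivers, outside a set $E_{C,\delta}$ of Lebesgue measure less than $\delta$, the bound
\[\bigl|S'(x,Q) - \Psi'(Q)\bigr| \leq K_{C,\delta}\Bigl(\Psi'(Q)^{1/2} \log^{3/2+\varepsilon}\bigl(\Psi'(Q)\bigr) + \tfrac{1}{2}\Bigr),\]
where $\varepsilon > 0$ may be fixed in advance as small as needed (it absorbs into the $O_C(1)$ constant). An elementary comparison shows that once $\Psi'(Q)$ exceeds an explicit threshold $k_{C,\delta}$ of the shape $(K_{C,\delta})^2 (\log \Psi'(Q))^{2C+3+2\varepsilon}$ (to be recorded as \eqref{k_EABHT}), the right-hand side is dominated by $\tfrac{2e\Psi'(Q)+1}{(\log \Psi'(Q))^C} + \tfrac{1}{2}$; for $\Psi'(Q) < k_{C,\delta}$ the same right-hand side is at most $k_{C,\delta}/2$. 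Taking the maximum of the two regimes produces the bound stated in the theorem.

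The hardest part will be extracting the explicit $K_C$ from the Aistleitner--Borda--Hauke proof and checking that it survives in the partial-sum form \eqref{1.2.7.a}, since the GCD graph combinatorics are intricate and the bookkeeping needed to render their second moment estimate into a telescoping-friendly form is the most technical piece of the argument. Once that step is complete, Theorem \ref{thm3} applies essentially as a black box, and the final translation into the maximum form of the stated bound is a routine calculus comparison of $\Psi'^{1/2}\log^{3/2+\varepsilon}\Psi'$ against $\Psi'/(\log \Psi')^C$.
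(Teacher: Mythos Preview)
Your proposal has a genuine gap: the variance bound you assume is not what Aistleitner--Borda--Hauke actually prove. Their Theorem 2 gives
\[
\int_{[0,1)}\left(\sum_{q\leq Q}\bigl(\mathds{1}_{\mathcal{A}_q}(x)-\mu(\mathcal{A}_q)\bigr)\right)^{2}\mathrm{d}x
= \sum_{q,r\leq Q}\mu(\mathcal{A}_q\cap\mathcal{A}_r)-\Psi'(Q)^2
= O_C\!\left(\frac{\Psi'(Q)^2}{(\log\Psi'(Q))^C}\right),
\]
which is of order $\Psi'(Q)^2/(\log\Psi'(Q))^C$, not of order $K_C\,\Psi'(Q)$. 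No bound of the linear shape \eqref{1.2.7.a} is available in the Duffin--Schaeffer setting; the overlaps $\mu(\mathcal{A}_q\cap\mathcal{A}_r)$ are simply not close enough to $\mu(\mathcal{A}_q)\mu(\mathcal{A}_r)$ for that, and this is precisely why the asymptotic in \eqref{Aistleitner} has error $\Psi'(Q)/(\log\Psi'(Q))^C$ rather than $\Psi'(Q)^{1/2+o(1)}$. So Theorem \ref{thm3} cannot be invoked here, and your subsequent comparison of $\Psi'^{1/2}\log^{3/2+\varepsilon}\Psi'$ against $\Psi'/(\log\Psi')^C$ is moot (and would in any case go in the wrong direction, since the former is much smaller).

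The paper handles this by proving a separate effective tool, Theorem \ref{effective v3}, tailored to the quadratic-over-log-power variance bound \eqref{Aist int2}. One sets $f_q(x)=\mathds{1}_{\mathcal{A}_q}(x)$, $f_q=\phi_q=\mu(\mathcal{A}_q)$, and applies Theorem \ref{effective v3} directly; the constant $\mathcal{C}$ in $k_{C,\delta}$ comes from making the implicit constant in Aistleitner--Borda--Hauke's Theorem 2 explicit. To repair your approach you would need to replace Theorem \ref{thm3} by Theorem \ref{effective v3} (or reprove an analogue of it), which uses a sparser subsequence $Q_k=\min\{Q:\Psi(Q)\ge e^{k^{1/\sqrt{C}}}\}$ adapted to the weaker variance control.
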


The proof involves an application of a result akin to Harman's Lemma 1.4 \cite{harman1998metric}, which we give explicitly here. We note the version below is given in a more general setting than in \cite{aistleitner2022metric}.
 
 \begin{lemma} \label{Aist general thm}
 Suppose that $\left(X,\,\Omega,\,\mu\right)$ is a measure space such that $0<\mu\left(X\right)< \infty$ and let $K$ be a positive real constant. Let $f_{q}: X \to  \left[0,\,K \right]$ be a sequence of $\mu$-measurable functions and let $f_{q},\, \phi_{q}$ be sequences of real numbers such that
 \[0 \leq f_{q} \leq \phi_{q} \leq K.\]
 Write $\Psi(Q)= \sum_{q=1}^{Q} \phi_{q}$, and suppose that $\Psi(Q) \rightarrow \infty$ and $Q \rightarrow \infty$.  Let $C>0$ be arbitrary. Suppose that for every $Q\in\mathbb{N}$,
 \begin{equation}\label{Aist int}
    \int_{X}\left(\sum_{1\leq q \leq Q}\left(f_{q}(x)-f_{q}\right)\right)^{2} \textrm{d}\mu = O_C\left(\frac{\Psi(Q)^{2}}{\left(\log \Psi(Q)\right)^{C}}\right). 
 \end{equation}

 Then for almost all $x \in X$, as $Q \to \infty$,
 \[\sum_{1\leq q \leq Q}f_{q}(x)=\sum_{1 \leq q \leq Q}f_{q} +O\left(\frac{\Psi(Q)}{\left(\log \Psi(Q)\right)^{C}}\right).\]
 \end{lemma}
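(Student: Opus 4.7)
The plan is to apply Chebyshev's inequality along a sparse subsequence, combine it with the Borel-Cantelli lemma, and then interpolate from the subsequence to arbitrary $Q$ using the monotonicity of the partial sums $S_Q(x) \coloneqq \sum_{q \leq Q} f_q(x)$ and $ES_Q \coloneqq \sum_{q \leq Q} f_q$. Both are non-decreasing in $Q$ since $f_q(x), f_q \geq 0$, and this monotonicity is the key ingredient that makes the interpolation step possible without an analogue of the block-variance hypothesis \eqref{1.2.7.a}.

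First I would fix $C > 0$ and apply Chebyshev's inequality to the hypothesis \eqref{Aist int} to obtain, for any positive threshold $\lambda_Q$,
\[
\mu\bigl\{x \in X : |S_Q(x) - ES_Q| > \lambda_Q\bigr\} \leq O_C\!\left(\frac{\Psi(Q)^2}{\lambda_Q^2 (\log \Psi(Q))^C}\right).
\]
Next I would choose a sparse subsequence $\{Q_n\}_{n \geq 1}$ by $Q_n \coloneqq \min\{Q \in \mathbb{N} : \Psi(Q) \geq e^{n^\beta}\}$ for a suitably small $\beta \in (0,1)$, and plug in the threshold $\lambda_{Q_n} \coloneqq \Psi(Q_n)/(\log \Psi(Q_n))^{C'}$ for some $C' < C/2$. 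The measure bound then becomes $O_C(n^{-\beta(C - 2C')})$, which is summable in $n$ whenever $\beta(C - 2C') > 1$. A Borel-Cantelli argument then yields, for $\mu$-almost every $x \in X$ and all $n$ sufficiently large,
\[
|S_{Q_n}(x) - ES_{Q_n}| \leq \Psi(Q_n)/(\log \Psi(Q_n))^{C'}.
\]

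To extend from the subsequence to arbitrary $Q$, I would sandwich $S_Q(x)$ between $S_{Q_n}(x)$ and $S_{Q_{n+1}}(x)$, and similarly $ES_Q$ between $ES_{Q_n}$ and $ES_{Q_{n+1}}$, for the unique $n$ with $Q_n \leq Q < Q_{n+1}$. This bounds $|S_Q(x) - ES_Q|$ by the larger of the two subsequence errors plus the gap $\Psi(Q_{n+1}) - \Psi(Q_n)$ (which controls $ES_{Q_{n+1}} - ES_{Q_n}$ through the inequality $f_q \leq \phi_q$). The growth rate $\Psi(Q_n) \asymp e^{n^\beta}$ with $\beta < 1$ ensures $\Psi(Q_{n+1}) - \Psi(Q_n) \asymp \beta n^{\beta - 1}\Psi(Q_n)$, which for $\beta$ sufficiently small relative to $C'$ is dominated by $\Psi(Q_n)/(\log \Psi(Q_n))^{C'}$, giving the claimed asymptotic.

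The main obstacle is the exponent mismatch: the Chebyshev-plus-Borel-Cantelli method naturally yields a conclusion exponent $C'$ strictly smaller than $C/2$, whereas the stated lemma asks for the exponent $C$ itself, and one cannot close this gap by direct Chebyshev alone. Since the implicit constant in \eqref{Aist int} is allowed to depend on $C$, and the lemma is invoked in practice (in the proof of Theorem \ref{effective solution counting} via the variance estimates of \cite{aistleitner2022metric}) with the hypothesis holding for every positive $C$, this is resolved by applying the argument with a larger hypothesis exponent, say $2C + 2$, to obtain the conclusion at exponent $C$. The simultaneous choice of $\beta$ and $C'$ to satisfy both the summability constraint $\beta(C - 2C') > 1$ and the interpolation constraint $\beta(1 + C') \leq 1$ is the most delicate point of the argument but is routine once the gap between hypothesis and conclusion exponents is taken to be sufficiently large.
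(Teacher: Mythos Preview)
Your proposal is correct and follows essentially the same approach as the paper's own proof: Chebyshev along the subsequence $Q_k=\min\{Q:\Psi(Q)\ge e^{k^{\beta}}\}$, Borel--Cantelli, and monotone interpolation between consecutive $Q_k$. The paper simply fixes the parameters $\beta=1/\sqrt{C}$ and threshold exponent $C'=C/4$ (so summability requires $C>4$), obtaining the error $O\bigl(\Psi(Q)/(\log\Psi(Q))^{\sqrt{C}-1}\bigr)$ and then invoking exactly the same ``apply the hypothesis with a larger $C$'' step you describe in your final paragraph to recover the stated exponent. Your observation that the lemma as literally stated (for a single fixed $C$) cannot yield conclusion exponent $C$ by this method is accurate and is a point the paper passes over quickly.
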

 
This is proved in the necessary case in Section 2 from \cite{aistleitner2022metric}, following the method of proof of Lemma 1.4 from \cite{harman1998metric}. In Section 4, we prove Lemma \ref{Aist general thm}, and we then prove Theorem \ref{effective v3} (its effective version); these are left to the next section as they are variants on the probabilistic results given above. 

\begin{thm}\label{effective v3} 
Suppose that $\left(X,\,\Omega,\,\mu\right)$ is a measure space such that $0<\mu\left(X\right)< \infty$, and let $K$ be a positive real constant. Let $f_{q}: X \to  \left[0,\,K \right]$ be a sequence of $\mu$-measurable functions and let $f_{q},\, \phi_{q}$ be sequences of real numbers such that
\[0 \leq f_{q} \leq \phi_{q} \leq K.\]
Write $\Psi(Q)= \sum_{q=1}^{Q} \phi_{q}$, and suppose that $\Psi(Q) \rightarrow \infty$ and $Q \rightarrow \infty$. Let $C>0$ be arbitrary, and suppose that for every $n\in\mathbb{N}$,
\begin{equation}\label{Aist int2}
    \int_{X}\left(\sum_{1\leq q \leq n}\left(f_{q}(x)-f_{q}\right)\right)^{2} \textrm{d}\mu = O_C\left(\frac{\Psi(Q)^{2}}{\left(\log \Psi(Q)\right)^{C}}\right)
\end{equation} 
Then for any $\delta > 0$, there exists an $E_{C,\,\delta} \subset X$ such that $\mu (E_{C,\,\delta}) < \delta$ and for any $x \in X \setminus E_{C,\,\delta}$, 
 \[\sum_{q=1}^{Q} f_{q}(x) \leq \sum_{q=1}^{Q}f_{q}+\max\left\{ k_{C,\,\delta}K,\,2\frac{e\Psi\left(Q\right) +K }{\left(\log \Psi\left(Q\right)\right)^{C}}+K\right\},\]
where $\mathcal{C}=\mathcal{C}(C)$ is a constant obtained by finding an explicit bound in \eqref{Aist int2} for
\begin{align} \label{k_EABHT}
    k_{C,\,\delta} \coloneqq\min \left\{k\,:\,\mathcal{C}k^{-\frac{\sqrt{C}}{2}}\left(1+\frac{2k}{\sqrt{C}-2}\right)<\delta\right\},
 \end{align}

 and $\mathcal{C}$ is a constant obtained by finding an explicit bound in \eqref{Aist int2}. 
\end{thm}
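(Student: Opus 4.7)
The plan is to reduce Theorem \ref{effective v3} to a two-step analysis: control the deviation $\sum_{q=1}^{Q}(f_q(x)-f_q)$ at a suitable subsequence of $Q$'s via Chebyshev's inequality applied to \eqref{Aist int2}, and then interpolate to all $Q$ using the nonnegativity of the $f_q$'s. The asymptotic Lemma \ref{Aist general thm} already proceeds along these lines; the effective version simply replaces the Borel--Cantelli limit by a quantitative truncation that depends explicitly on $\delta$.

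First, Chebyshev applied to \eqref{Aist int2} gives, for any threshold $t_n > 0$,
\[\mu\!\left(\left\{x \in X : \left|\sum_{q=1}^{n}(f_q(x)-f_q)\right| > t_n\right\}\right) \leq \frac{\mathcal{C}\,\Psi(n)^{2}}{t_n^{2}\,(\log \Psi(n))^{C}}.\]
I would then choose a geometric subsequence $\{n_j\}$ with $\Psi(n_j)$ of order $e^{j}$, so that $\Psi(n_{j+1}) \leq e\,\Psi(n_j) + K$ (using $\phi_q \leq K$). Taking $t_j$ of the form $\Psi(n_j)/(\log \Psi(n_j))^{C}$ and tracking the Chebyshev bound, the failure probability at $n_j$ decays like a negative power of $j$; the tail of these measures past index $k$ admits an explicit bound of the form $\mathcal{C}k^{-\sqrt{C}/2}\bigl(1 + 2k/(\sqrt{C}-2)\bigr)$, and choosing $k = k_{C,\delta}$ as in \eqref{k_EABHT} ensures this tail is less than $\delta$. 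Defining $E_{C,\delta}$ to be the union of the bad sets at $n_j$ for $j \geq k_{C,\delta}$ then gives $\mu(E_{C,\delta}) < \delta$ by construction.

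Outside $E_{C,\delta}$ the argument splits. For $Q \geq n_{k_{C,\delta}}$, choose $j$ with $n_j \leq Q < n_{j+1}$. Nonnegativity of the $f_q$ gives
\[\sum_{q=1}^{Q} f_q(x) \leq \sum_{q=1}^{n_{j+1}} f_q(x) \leq \sum_{q=1}^{n_{j+1}} f_q + t_{j+1},\]
and the bound $f_q \leq \phi_q$ applied to the gap yields $\sum_{q=1}^{n_{j+1}} f_q \leq \sum_{q=1}^{Q} f_q + (\Psi(n_{j+1}) - \Psi(Q))$. Combining these and using $\Psi(n_{j+1}) \leq e\,\Psi(Q) + K$ together with $\log \Psi(n_{j+1}) \asymp \log \Psi(Q)$ produces the claimed $2(e\,\Psi(Q)+K)/(\log \Psi(Q))^{C} + K$ term. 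For $Q < n_{k_{C,\delta}}$ the crude pointwise estimate $f_q(x) - f_q \leq K$ gives $\sum_{q=1}^{Q}(f_q(x)-f_q) \leq K n_{k_{C,\delta}}$, which is absorbed into the $k_{C,\delta}K$ term.

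The main obstacle is the explicit bookkeeping in the middle step: the threshold sequence $t_j$ has to be chosen so that the failure probabilities are genuinely summable with the precise tail rate encoded in $k_{C,\delta}$, while simultaneously producing both the coefficient $2e$ and the additive $K$ appearing in the final bound. The interpolation step itself is routine because we only seek a one-sided upper bound and the $f_q$ are nonnegative, so monotonicity does all the work of transferring the estimate from $n_{j+1}$ back to $Q$.
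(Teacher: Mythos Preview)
Your overall architecture---Chebyshev along a subsequence, then interpolation via nonnegativity, then a trivial bound for small $Q$---is exactly the paper's. But two of your specific parameter choices break the argument, and they are precisely the choices the paper makes differently.

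First, your threshold $t_j = \Psi(n_j)/(\log\Psi(n_j))^{C}$ is too small. Chebyshev together with \eqref{Aist int2} gives
\[
\mu\!\left(\left|\sum_{q\le n_j}(f_q(x)-f_q)\right|>t_j\right)\le \frac{\mathcal{C}\,\Psi(n_j)^2/(\log\Psi(n_j))^{C}}{\Psi(n_j)^2/(\log\Psi(n_j))^{2C}}=\mathcal{C}\,(\log\Psi(n_j))^{C},
\]
which diverges rather than decays; there is nothing to sum. The paper instead takes the threshold $\Psi(Q_k)/(\log\Psi(Q_k))^{C/4}$, so that Chebyshev yields $\mu(\mathcal{B}_k)\le \mathcal{C}(\log\Psi(Q_k))^{-C/2}$. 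Second, your geometric spacing $\Psi(n_j)\sim e^{j}$ makes the interpolation gap $\Psi(n_{j+1})-\Psi(n_j)\sim (e-1)\Psi(n_j)$, which is of the \emph{same} order as $\Psi(Q)$ and therefore cannot be bounded by $2(e\Psi(Q)+K)/(\log\Psi(Q))^{C}$. The paper's subsequence $Q_k=\min\{Q:\Psi(Q)\ge e^{k^{1/\sqrt{C}}}\}$ is chosen so that $\log\Psi(Q_k)\ge k^{1/\sqrt{C}}$, which simultaneously (i) converts $(\log\Psi(Q_k))^{-C/2}$ into the summable $k^{-\sqrt{C}/2}$ appearing in the definition of $k_{C,\delta}$, and (ii) forces the gap $\Psi(Q_{k+1})-\Psi(Q_k)$ to be of order $\Psi(Q_k)(\log\Psi(Q_k))^{1-\sqrt{C}}$, which \emph{is} negligible. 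The final exponent on $\log\Psi(Q)$ is then $\sqrt{C}-1$, and the paper renames $C$ at the end since $C$ was arbitrary.

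A smaller point: your trivial-range bound $\sum_{q\le Q}(f_q(x)-f_q)\le K\,n_{k_{C,\delta}}$ cannot be ``absorbed into the $k_{C,\delta}K$ term'' because $n_{k_{C,\delta}}$ (an index at which $\Psi$ reaches a prescribed size) is in general far larger than $k_{C,\delta}$. The paper sidesteps this by taking the trivial range to be $Q\le k_{C,\delta}$ directly.
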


A bound for the constant $\mathcal{C}$ (which is the implicit constant in \eqref{Aist int2}) has been established and will be published separately due to the length of the calculation.

We now give the proof of Theorem \ref{effective solution counting} assuming Theorem \ref{effective v3}.

\begin{proof}
The proof follows that from \cite{aistleitner2022metric}, using Theorem \ref{effective v3} in place of Theorem 1 from \cite{aistleitner2022metric}. First define 

\begin{align*}
    \mathcal{A}_{q}\coloneqq[0,1]\bigcap\bigcup_{\substack{1 \leq p \leq q \\ \textrm{gcd}(p,\,q)=1}}\left(\frac{p-\psi(q)}{q},\, \frac{p+\psi(q)}{q}\right),\quad q\in\mathbb{N}
\end{align*}
We now note that if $\mu$ is the Lebesgue measure on $\left[0,\,1\right]$, then
\[\mu\left(\mathcal{A}_{q}\right)=\frac{2 \varphi(q)\psi(q)}{q}.\]

In Theorem \ref{effective v3}, we set $f_{q}=\phi_{q} =\mu\left(\mathcal{A}_{q}\right)$ and set $f_{q}(x)= \mathds{1}_{\mathcal{A}_{q}}(x)$, where $\mathds{1}_{\mathcal{A}_{q}}$ denotes the characteristic function on $\mathcal{A}_{q}$. We note that this means that $\Psi(Q)$ as defined in Theorem \ref{effective solution counting} and Theorem \ref{effective v3} are equivalent.

We now note that
\[\int_{0}^{1}\left(\sum_{q=1}^{Q}\left(\mathds{1}_{\mathcal{A}_{q}}(x)-\mu\left(\mathcal{A}_{q}\right)\right)\right)^{2}\mathrm{d}x=\sum_{q,\,r \leq Q}\mu\left(\mathcal{A}_{q} \cap \mathcal{A}_{r}\right)-\Psi(Q)^{2}.\]
By Theorem 2 from \cite{aistleitner2022metric}, we have that
\[\sum_{q,\,r \leq Q}\mu\left(\mathcal{A}_{q} \cap \mathcal{A}_{r}\right)-\Psi(Q)^{2} = O_{C}\left(\frac{\Psi(Q)^{2}}{\left(\log \Psi(Q)\right)^{C}}\right),\]
with an implied constant depending on $C$ for arbitrary $C>0$. We note that this implied constant is the constant that has been calculated, with the working to be published separately due to length. We can thus apply Theorem \ref{effective v3} and the statement follows. 
\end{proof}

\subsection{Inhomogeneous Diophantine Approximation on $M_{0}$-sets}
Theorems 2 and 3 can also be applied to give effective results akin to those in \cite{Pollington2022}, which is about inhomogeneous Diophantine approximation on $M_0$-sets. For background and motivation we refer the reader to the introduction and further discussion in \cite{Pollington2022}; however, we begin this section by recalling some definitions to make this paper self contained. 

As usual, the Fourier transform of a non-atomic probability measure $\mu$ is defined by
\[\hat{\mu}(t)=\int e^{-2\pi i t x}\textrm{d}\mu(x),\, t \in \mathbb{R}.\]
A closed set $E\subset \mathbb{R}$ is said to be an $M_0$-set if there exists a probability measure $\mu$ on $E$ such that $\hat\mu$ vanishes at infinity.

Given an increasing sequence of natural numbers $\mathcal{A}=(q_{n})_{n \in \mathbb{N}}$, we call $\mathcal{A}$ lacunary if there exists a constant $K > 1$ such that for all $n \in \mathbb{N}$,
\begin{equation}\label{eq17}
 \frac{q_{n+1}}{q_{n}}\geq K.  
\end{equation}

Relatedly, let $\alpha \in (0,1).$ We say that a sequence of increasing natural numbers $\mathcal{A}$ is $\alpha$-separated if there exists a constant $m_{0} \in \mathbb{N}$ such that for any integers $m_{0}\leq m <n$, we have that if
\[1 \leq \left|sq_{m}-tq_{n}\right|<{q_{m}}^{\alpha}\]
for some $s,\,t \in \mathbb{N}$, then
\[s>m^{12}.\]

Finally, let $\psi:\mathbb{N} \to \mathbb{R}^+$ be a real, positive function and let $\gamma \in [0,1]$. We define the counting function
\begin{equation}\label{R(x,N)}
    R(x,\,N)=R(x,\,N;\,\gamma,\, \psi,\, \mathcal{A})=\#\left\{1 \leq n \leq N\,:\, \left\|q_{n}x-\gamma \right\|\leq \psi(q_{n})\right\},    
\end{equation}
where $\left\|\alpha\right\|:=\min\left\{\left|\alpha-m\right|: m \in \mathbb{Z}\right\}$ denotes the absolute distance from $\alpha$ to its nearest integer.

We will now state effective versions of Theorems 1 and 4 from \cite{Pollington2022}; for their original asymptotic versions, we refer the reader to the original paper.



\begin{thm}[Effective Theorem 1 from \cite{Pollington2022}]\label{E.Thm1}
Let $F\subset[0,1]$ and $\mu$ be an non-atomic probability measure supported on $F$. Let $\gamma\in[0,1]$ and $\psi:\mathbb{N}\to(0,1]$ be a function. Let $\mathcal{A}=(q_n)_{n\in\mathbb{N}}$ be a lacunary sequence of positive integers. Suppose there exists $\nu >0$ and $A>2$ such that for any $t\in\mathbb{R}\setminus[-1,1]$, \begin{align}
\label{eq8}
    \left|\hat{\mu}(t)\right|\leq\frac{\nu }{\log^A{|t|}}
. \end{align}
Then for any $\varepsilon>0$ and $\delta>0$, there exists $\mu$-measurable $E_{\varepsilon,\delta}\subset F$ such that $\mu(E_{\varepsilon,\delta})<\delta$ and for any $x\in F\setminus E_{\varepsilon,\delta}$ and $N\in\mathbb{N}$, \begin{align}
    \label{E.T1}
        |R(x,N)-2\Psi(N)|\leq 2K_{\varepsilon,\delta/2}\left(\Psi(N)^{2/3}\left(\log{\Psi(N)}+2\right)^{2+\varepsilon}\right)
    +t_{1,\delta}
    ,\end{align} 
    where $\alpha\in(0,1)$, $\log^{+}(\beta)\coloneqq\max{(0,\log \beta)}$ and
\begin{align}
    \label{Psi}
    \Psi(n)&=\sum_{k=1}^n\psi(k)\\
    \label{phithm1}
    \varPhi(N)&=\Psi^{4/3}(\log^+{\varPhi(N)+1})+\Psi(N), \\
    \label{Kthm1}
    K&=6\max{(48,c_1,c_2)}, \\
    \label{c1}
    c_1&= \frac{22}{K_0-1}, \\
    \label{c2}
    \begin{split}
        c_2     &= 12\left(\frac{3}{2^{2/3}}+1\right)\left(1+\zeta{\left(A-1\right)}\right)+\frac{8}{K_0-1} \\
                &\qquad +18\nu C^{-A} \left(\sqrt{2}\zeta{\left(A-\frac{1}{2}\right)}\left(2+\alpha^{-A}\right)+2^{A+1}\zeta{\left(\frac{A}{2}\right)}\right),
    \end{split}\\
    \label{t,1,delta}
    t_{1,\delta}&=\frac{1}{2}+\left(\frac{(1-A)\delta}{2(3+\nu/C^A)}\right)^{1/(1-A)},
\end{align}
where $N_{\varepsilon,\delta}$, $r_{\varepsilon,\delta}$, and $K_{\varepsilon,\delta}$ and $\varPhi_{0}$ are given in Theorem \ref{thm3}.
\end{thm}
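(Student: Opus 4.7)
My plan is to encode $R(x,N)$ as a sum of indicator functions $f_k(x) = \mathbf{1}_{A_k}(x)$, where $A_k = \{x \in [0,1] : \|q_k x - \gamma\| \leq \psi(q_k)\}$, and then apply Theorem \ref{thm2} to the sequence $\{f_k\}$. The main hypothesis \eqref{1.2.7} will be verified by Fourier analysis, exploiting the decay of $\hat\mu$ in \eqref{eq8} together with the lacunarity \eqref{eq17} of $\mathcal{A}$. This follows the outline of Theorem 1 in \cite{Pollington2022}; the new work is to keep all implicit constants explicit so that the exceptional-set parameter $\delta$ is honestly tracked.

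Setting $f_k := \int f_k \, d\mu = \mu(A_k)$ and $\varphi_k := 2\psi(q_k)$, I would first expand the periodised indicator of $[-\psi(q_k), \psi(q_k)]$ as a Fourier series on $\mathbb{T}$ to obtain
\[
f_k - 2\psi(q_k) = \sum_{n \neq 0} c_{n,k} \, e^{-2\pi i n \gamma} \, \hat\mu(-n q_k)
\]
with $|c_{n,k}| \leq 1/(\pi|n|)$. The decay hypothesis \eqref{eq8} then makes the tail $\sum_{k \geq 1} |f_k - 2\psi(q_k)|$ summable, and an explicit bound (involving $\nu$, $A$, and $\zeta$-values, with a $C^{-A}$ factor from restricting to $|nq_k| \geq C$) controls $\left|\sum f_k - 2\Psi(N)\right|$ outside a set of $\mu$-measure $< \delta/2$ via Markov's inequality; this is the origin of the $t_{1,\delta}$ contribution in \eqref{t,1,delta}.

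Next I would verify the variance bound by expanding
\[
\int_F \Bigl(\sum_{k=1}^N (f_k(x) - f_k)\Bigr)^{2} d\mu(x) = \sum_{k=1}^N (f_k - f_k^2) + 2 \sum_{1 \leq j < k \leq N} \Bigl(\int f_j f_k \, d\mu - f_j f_k\Bigr).
\]
The diagonal is dominated by $\sum_k f_k$ and hence by $\varPhi(N)$. For the off-diagonal, expanding both indicators as Fourier series produces frequencies $n_1 q_j + n_2 q_k$; lacunarity ensures that for $j < k$ one has $|n_1 q_j + n_2 q_k| \geq c\, q_k$ for $(n_1, n_2) \neq (0,0)$ in a reasonable range (with $c$ depending on $K_0$), so \eqref{eq8} gives enough decay. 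Careful summation yields a bound of shape $K\,\varPhi(N)$ with $K$ as in \eqref{Kthm1}; the constant $c_1$ in \eqref{c1} arises from the lacunarity factor $1/(K_0 - 1)$, and $c_2$ in \eqref{c2} packages the Fourier tail $\nu C^{-A}$ together with the $\zeta$-sums that come from summing over $n_1, n_2$.

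With both pieces in hand, Theorem \ref{thm2} applied with parameters $\varepsilon$ and $\delta/2$ controls $\left|\sum f_k(x) - \sum f_k\right|$ on the complement of a set of $\mu$-measure $< \delta/2$, with the recursive definition of $\varPhi$ in \eqref{phithm1} absorbing the logarithmic factors from the variance; a union with the Fourier-error exceptional set gives $\mu(E_{\varepsilon,\delta}) < \delta$, and the triangle inequality yields \eqref{E.T1}. The main obstacle, and the heart of the effective version, is the off-diagonal variance estimate: each Fourier coefficient must be bounded explicitly (rather than in $O$-notation), and the double sum over $(n_1, n_2)$ must be split into small- and large-frequency regimes in a way that cleanly tracks both $K_0$ and $A$. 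Once that bookkeeping is in place, the application of Theorem \ref{thm2} is essentially mechanical.
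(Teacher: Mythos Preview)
Your overall architecture is right (indicator functions, Fourier expansion, lacunarity to control off-diagonal pairs), but two concrete choices would block the argument as you have written it.

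First, the paper applies Theorem~\ref{thm3}, not Theorem~\ref{thm2}; indeed the statement of Theorem~\ref{E.Thm1} explicitly takes $K_{\varepsilon,\delta}$, $N_{\varepsilon,\delta}$, $r_{\varepsilon,\delta}$, $\varPhi_0$ from Theorem~\ref{thm3}. This is not a cosmetic difference. Proposition~\ref{E.Prop1} shows that the off-diagonal variance is of order $\Psi(N)^{4/3}\log^+\Psi(N)$, \emph{not} $\Psi(N)$, so your choice $\varphi_k=2\psi(q_k)$ (giving $\varPhi(N)=2\Psi(N)$) cannot satisfy the hypothesis~\eqref{1.2.7}. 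To absorb the $\Psi^{4/3}\log\Psi$ term one must take the weighted sequence $\varphi_n=\psi(q_n)\Psi(n)^{1/3}(\log^+\Psi(n)+1)+2\psi(q_n)$, so that $\varPhi(N)$ has the shape~\eqref{phithm1}; but then $\varphi_n$ is unbounded and violates the constraint $\varphi_k\le 1$ in Theorem~\ref{thm2}. Theorem~\ref{thm3} drops that constraint and, with $\varPhi\asymp\Psi^{4/3}\log\Psi$, its error $\varPhi^{1/2}(\log\varPhi)^{3/2+\varepsilon}$ gives exactly the $\Psi^{2/3}(\log\Psi+2)^{2+\varepsilon}$ appearing in~\eqref{E.T1}. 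With Theorem~\ref{thm2} you would obtain neither the correct variance hypothesis nor the stated exponents.

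Second, your account of $t_{1,\delta}$ is off. The quantity $\sum_k f_k - 2\Psi(N)=\sum_k(\mu(A_k)-2\psi(q_k))$ is deterministic, so Markov's inequality cannot produce an exceptional set from it; in the paper this difference is controlled \emph{uniformly} by Lemma~\ref{E.Lemma5} and fed into the variance estimate. The additive constant $t_{1,\delta}$ has a different source: Lemmas~\ref{E.Lemma5} and~\ref{E.Lemma8} require the auxiliary lower bound $\psi(q_n)\ge 3n^{-A}$ (condition~\eqref{extra con L5}), and Lemma~\ref{lastly count update additive constant thingy} removes that assumption by comparing $R(x,N;\psi)$ with $R(x,N;\psi^*)$ for $\psi^*=\max(\psi,3n^{-A})$. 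The tail $\sum_{n\ge t}(3n^{-A}+\nu C^{-A}n^{-A})$ is what gives the $(1-A)$-type exponent in~\eqref{t,1,delta}, and the associated Borel--Cantelli set $F_{\delta/2}$ accounts for the second $\delta/2$; the final exceptional set is $E_{\varepsilon,\delta}=E'_{\varepsilon,\delta/2}\cup F_{\delta/2}$.
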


\begin{thm}[Effective Theorem 4 from \cite{Pollington2022}]\label{E.Thm4}
Let $F\subset[0,1]$ and $\mu$ be an non-atomic probability measure supported on $F$. Let $\gamma\in[0,1]$ and $\psi:\mathbb{N}\to(0,1]$ be a function. Let $\mathcal{A}=(q_n)_{n\in\mathbb{N}}$ be an $\alpha$-separated sequence. Suppose there exists $\nu >0$ and $A>2$ such that for any $t\in\mathbb{R}\setminus[-1,1]$, \eqref{eq8} is satisfied.  Further, assume that $\mathcal{A}=(q_{n})_{n \in \mathbb{N}}$ satisfies the following growth condition:
\begin{equation}\label{eq32}
\log q_{n}> Cn^{1/B}    
\end{equation}

Then for any $\varepsilon>0$ and $\delta>0$, there exists $\mu$-measurable $E_{\varepsilon,\delta}\subset F$ such that $\mu(E_{\varepsilon,\delta})<\delta$ and for any $x\in F\setminus E_{\varepsilon,\delta}$ and $N\in\mathbb{N}$, \begin{align}
    \label{E.T4}
        \begin{split}
        |R(x,N)-2\Psi(N)|&\leq K_{\varepsilon,\delta/2}\left(\left(\Psi(N)\left(\log^+{\Psi(N)}+2\right)+E(N)\right)^{1/2}\right. \\
                &\qquad \left.\left(\log{\left(\Psi(N)\left(\log^+{\Psi(N)}+2\right)+E(N)\right)}\right)^{3/2+\varepsilon}+2\right)
        +t_{2,\delta},
        \end{split}
    \end{align}
    where $R(x,N)$, $\Psi(N)$ and $c_2$ are given at \eqref{R(x,N)}, \eqref{Psi} and \eqref{c2} respectively. Further,
\begin{align}
E(N)&=\mathop{\sum\sum}_{1 \leq m<n\leq N} (q_{m},\,q_{n})\min \left(\frac{\psi(q_{m})}{q_{m}},\, \frac{\psi(q_{n})}{q_{n}}\right), \nonumber \label{def of E(N)}\\
\varPhi(N)&=\Psi(N)\left(\log^+{\Psi(N)}+2\right)+E(N), \\
    \label{Kthm4}
    K&=2\max{(48,c_3)}, \\
    \label{c3}
    c_3&=4+18\nu C^{-A} \left(\sqrt{2}\zeta{\left(\frac{A}{B}-\frac{1}{2}\right)}\left(2+\alpha^{-A}\right)+2^{A+1}\zeta{\left(\frac{A}{2B}\right)}\right)+c_2, \\
    \label{t,2,delta}
    t_{2,\delta}&=\frac{1}{2}+\left(\frac{(1-\min{(9,A/B)})\delta}{2(1+\nu/C^A)}\right)^{1/(1-\min{(9,A/B)}))},
\end{align}
where $N_{\varepsilon,\delta}$, $r_{\varepsilon,\delta}$, and $K_{\varepsilon,\delta}$ and $\varPhi_{0}$ are given in Theorem \ref{thm3}.
\end{thm}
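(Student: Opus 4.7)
The plan is to reduce the theorem to an application of Theorem \ref{thm3} applied to the indicator functions of the shifted approximation sets, with the variance hypothesis \eqref{1.2.7.a} verified via Fourier analysis using both the decay condition \eqref{eq8} and the $\alpha$-separation of $\mathcal{A}$. First I would set $A_k = \{x \in F : \|q_k x - \gamma\| \leq \psi(q_k)\}$, take $f_k(x) = \mathds{1}_{A_k}(x)$ and $f_k = \phi_k = \mu(A_k)$, so that $\sum_{k \leq N} f_k(x) = R(x,N)$. A direct Fourier expansion of the $\psi(q_k)$-neighbourhood of $\gamma$ (viewed on $\mathbb{R}/\mathbb{Z}$) combined with \eqref{eq8} gives $f_k = 2\psi(q_k) + O(\nu\log^{-A} q_k)$; the growth condition \eqref{eq32} makes $\sum_k \log^{-A} q_k \leq C^{-A}\zeta(A/B)$ summable, so $\sum_{k \leq N} f_k = 2\Psi(N) + O(1)$, and a Markov-type argument on this residual produces the additive term $t_{2,\delta}$ at the cost of measure $\delta/2$.

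The heart of the proof is verifying the variance inequality
\begin{equation*}
\int_F \Bigl(\sum_{k=m+1}^{n}(f_k(x) - f_k)\Bigr)^2 \,\mathrm{d}\mu(x) \leq K\bigl(\varPhi(n) - \varPhi(m)\bigr)
\end{equation*}
with $\varPhi(N) = \Psi(N)(\log^+ \Psi(N) + 2) + E(N)$ and $K = 2\max(48, c_3)$. Expanding the square, the diagonal contribution is at most $\Psi(n) - \Psi(m)$. For the off-diagonal terms I would expand each pair $\mathds{1}_{A_j}\mathds{1}_{A_k}$ as a double Fourier series in the variables $q_j x - \gamma$ and $q_k x - \gamma$, then integrate against $\mu$; this turns the cross term into a sum of values of $\hat\mu$ at points of the form $s q_j \pm t q_k$. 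The contribution from $s q_j = t q_k$ gives precisely the ``gcd times min'' estimate, summing to the $E(N)$ piece; the remaining frequencies are controlled by \eqref{eq8} after using the $\alpha$-separation hypothesis, which guarantees that whenever $|sq_j - tq_k|$ is too small to exploit Fourier decay, $s$ must be at least $m^{12}$, making that block of the sum harmless. The logarithmic factor $(\log^+ \Psi(N) + 2)$ arises from a standard $\sum 1/k$ estimate in this bookkeeping. Tracking constants carefully through these estimates yields the $c_3$ of \eqref{c3}, where the zeta factors with argument $A/B$ (rather than $A$) reflect the reparametrisation forced by the growth bound \eqref{eq32}.

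With the variance bound in hand, I would then invoke Theorem \ref{thm3} with $\delta/2$ in place of $\delta$, producing an exceptional set $E_{\varepsilon,\delta/2}$ of measure at most $\delta/2$ outside of which
\begin{equation*}
\Bigl|\sum_{k=1}^{N} f_k(x) - \sum_{k=1}^{N} f_k\Bigr| \leq K_{\varepsilon,\delta/2}\bigl(\varPhi^{1/2}(N)\log^{3/2+\varepsilon}\varPhi(N) + \max_{k \leq N} f_k\bigr),
\end{equation*}
with $\max_k f_k \leq 2$. Combining this with the mean value estimate from the first paragraph via the triangle inequality, and taking the union of the two exceptional sets (of total measure less than $\delta$), delivers \eqref{E.T4}.

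The hard part will be the variance step: the subtle work is orchestrating the three regimes of the Fourier frequencies $(s,t)$---the resonant case $sq_j = tq_k$, the near-resonant case where $\alpha$-separation is needed to force $s$ large, and the generic case where \eqref{eq8} alone suffices---so that the constants combine cleanly into a single explicit $K$. In particular, the separation parameter $\alpha$ enters the bound only through the $\alpha^{-A}$ factor in $c_3$, and one must be careful that the bookkeeping remains uniform in $m, n$ so that the upper bound splits as $\varPhi(n) - \varPhi(m)$ rather than as a non-additive expression; this is what legitimises the application of Theorem \ref{thm3} (rather than merely Theorem \ref{thm2}) and is the reason the final bound has the sharper $\Psi^{1/2}$-type shape.
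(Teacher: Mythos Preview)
Your overall architecture---verify the variance hypothesis \eqref{1.2.7.a} for the indicators of $A_k$ via Fourier analysis, then feed this into Theorem~\ref{thm3}---is exactly the paper's strategy, and your description of the three Fourier regimes (resonant, near-resonant, generic) matches the decomposition underlying Lemmas~\ref{E.Lemma5}--\ref{E.Lemma8} and Proposition~\ref{E.Prop2}. But you have misidentified the origin of the additive term $t_{2,\delta}$, and this points to a genuine gap in the variance step.

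The mean residual $\bigl|\sum_{k\le N}\mu(A_k)-2\Psi(N)\bigr|$ is in fact bounded \emph{deterministically} by an absolute constant (this is Lemma~\ref{E.Lemma5}, giving the bound $m_1=3+3\zeta(A/B-1)$); no Markov-type argument is needed there, and no measure-$\delta/2$ exceptional set arises from it. Where the paper actually spends the second $\delta/2$ is in the near-resonant regime of your variance estimate. The contribution from frequencies with $1\le|sq_m-tq_n|<q_m^{\alpha}$ carries, after the standard Fej\'er-kernel bounds, a factor of $1/\psi(q_m)$; so knowing only that $s>m^{12}$ from $\alpha$-separation is \emph{not} enough to make that block harmless, because $\psi(q_m)$ could be tiny. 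The paper therefore first imposes the auxiliary lower bound $\psi(q_n)\ge n^{-9}$ (see \eqref{eq91} and Lemma~\ref{E.Lemma8}), under which $s>m^{12}$ does kill the block, proves the variance bound and applies Theorem~\ref{thm3}, and only afterwards removes the auxiliary bound via a Borel--Cantelli argument (Lemma~\ref{lastly count update additive constant thingy}): one compares $R(x,N;\psi)$ with $R(x,N;\psi^*)$ for $\psi^*(q_n)=\max(\psi(q_n),n^{-9})$, and the discrepancy is at most $t_{2,\delta}$ off a set of measure $\delta/2$. That is the true source of $t_{2,\delta}$.

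A minor related inconsistency: you set $\phi_k=\mu(A_k)$ in the first paragraph but then take $\varPhi(N)=\Psi(N)(\log^{+}\Psi(N)+2)+E(N)$ in the variance paragraph; these are incompatible. The paper resolves this by choosing $\varphi_n=\psi(q_n)(\log^{+}\Psi(n)+2)+\sum_{m<n}(q_m,q_n)\min(\psi(q_m)/q_m,\psi(q_n)/q_n)$ so that $\varPhi(N)$ really is the claimed expression.
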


The proofs of these follow as in previous sections; various lemmas are made fully effective before applying Theorem 3. However, the lemmas needed to make the results fully effective rely heavily on the original proofs given in \cite{Pollington2022}. In this section we state the effective lemmas needed and prove the theorems above using these; proofs of the effective lemmas are left to the appendix.

We begin by noting that $t_{1,\delta}$ and $t_{2,\delta}$ appeared in \eqref{t,1,delta} and \eqref{t,2,delta} are needed to account for some assumptions made during the proofs, namely \eqref{extra con L5} and \eqref{eq91} below. In our proofs, we will initially make these assumptions, before proving Lemma \ref{lastly count update additive constant thingy}, which allows us to remove them. We further note that a lacunary sequence, as discussed in Theorem 6, satisfies \eqref{eq32} with $B=1$; thus in the following, when proving Theorem 6 we take $B=1$.

We note that if $\mu$ is the Lebesgue measure on $[0,1]$, then $\mu$ is a probability measure and \eqref{eq8} is satisfied by taking $\nu =1/\pi$.

In what follows we give effective versions of results from \cite{Pollington2022}, before going on to prove Theorems \ref{E.Thm1} and \ref{E.Thm4} above. Specifically, we make Lemmas 5 to 8 and Propositions 1 and 2 from \cite{Pollington2022} effective, then apply the results to Theorem 1 and 4 in that paper to give the quantitative versions. We begin by giving statements of explicit versions of Lemmas 5 to 8 from \cite{Pollington2022}, then use these to make Propositions 1 and 2 from \cite{Pollington2022} effective. We give the proofs of these results in the appendix. 

\begin{lemma}[Effective Lemma 5 from \cite{Pollington2022}]\label{E.Lemma5} 
    Let $\mu$ be a non-atomic probability measure supported on $F\subset[0,1]$ and $(q_n)_{n\in\mathbb{N}}$ be an increasing sequence of positive integers greater than 4. Suppose there exists $B\geq1$ and $C>0$ such that growth condition \eqref{eq32} is satisfied. Let $\gamma\in[0,1]$ and $\psi:\mathbb{N}\to(0,1]$ be a function. Suppose there exists $A>2B$ such that \eqref{eq8} is satisfied. Further suppose that for any $\tau>1$ and $n\in\mathbb{N}$,
    \begin{align}
        \label{extra con L5}
            \psi(q_n)\geq3n^{-\tau}.
     \end{align}
 
    It then follows that for arbitrary $a,b\in\mathbb{N}$, if $a<b$ then we have that
\begin{align}
    \label{E.L5}
        \left|\sum_{n=a}^b\mu\left(E_{q_n}^\gamma\right)-2\sum_{n=a}^b\psi(q_n)\right|\leq\min{\left(m_1,m_2\sum_{n=a}^b\psi(q_n)\right)}
    , \end{align} 
    where for any $q\in\mathbb{N}$, \begin{align}
        E_q^\gamma=E_q^\gamma(\psi)&\coloneqq\{x\in[0,1]:\|qx-\gamma\|\leq\psi(q)\}, \\
        \label{m1m2}
        m_1&=m_1(A,B)=3+3\zeta{\left(\frac{A}{B}-1\right)}, \\
        m_2&=\frac{3}{2^{2/3}}<2
    . \end{align}
\end{lemma}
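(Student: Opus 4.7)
The plan is to evaluate $\mu(E_{q_n}^\gamma)$ via Fourier analysis on the circle. Specifically, for each $n$ we approximate the indicator of the arc $\{\|q_n x-\gamma\|\leq\psi(q_n)\}$ by a Selberg/Vaaler-type trigonometric polynomial (cut off at some degree $K_n$ chosen in terms of the lower bound $\psi(q_n)\ge 3n^{-\tau}$), so that
\begin{equation*}
\mu\left(E_{q_n}^\gamma\right)
= 2\psi(q_n) + \sum_{0<|k|\le K_n} c_k^{(n)}\, e^{-2\pi i k\gamma}\,\widehat{\mu}(k q_n) + R_n,
\end{equation*}
where the Fourier coefficients of the indicator satisfy $|c_k^{(n)}|\le \min\bigl(2\psi(q_n),\,1/(\pi|k|)\bigr)$ and the truncation error $R_n$ can be controlled by a tail estimate together with \eqref{extra con L5}. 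This is exactly the expansion used to prove Lemma~5 in \cite{Pollington2022}; the only new work here is to keep every implied constant explicit.

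Next I would sum the identity over $a\le n\le b$ and estimate the triple sum
\begin{equation*}
\sum_{n=a}^{b}\Bigl|\sum_{0<|k|\le K_n} c_k^{(n)}\widehat{\mu}(kq_n)\Bigr|
\end{equation*}
in two independent ways, corresponding to the two entries of the $\min$ in \eqref{E.L5}. For the first (absolute) bound $m_1$, plug in $|c_k^{(n)}|\le 1/(\pi|k|)$ and use the Fourier-decay hypothesis \eqref{eq8} together with the growth assumption \eqref{eq32}, which gives $\log(|k|q_n)\ge \log q_n\ge C n^{1/B}$ for $|k|\ge1$. Summing over $k$ produces a convergent $\log$-integral, and summing over $n$ yields a bound proportional to $\sum_{n\ge 1} n^{-A/B}$, which converges to $\zeta(A/B-1)$-type constants since $A>2B$. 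Combining with the tail/truncation error absorbed into the additive $3$ gives $m_1=3+3\zeta(A/B-1)$. For the second (relative) bound, one instead uses the cruder estimate $|c_k^{(n)}|\le 2\psi(q_n)$ inside the Fourier expansion, so the error on each term is controlled by $\psi(q_n)$ itself; summing over $n$ then yields a constant multiple of $\sum_{n=a}^b\psi(q_n)$, and the sharpest such constant works out to $3/2^{2/3}=m_2$.

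The main obstacle is not conceptual but quantitative: one has to track every numerical constant through the Fourier expansion, the truncation estimate (where \eqref{extra con L5} is invoked to make $R_n$ negligible uniformly in $n$), and the splitting of the Fourier sum at the optimal cutoff $K_n$, so that the final constants match $m_1$ and $m_2$ exactly. The role of the growth condition \eqref{eq32} with $A>2B$ is delicate: it is exactly what makes the double sum $\sum_n\sum_k |c_k^{(n)}\widehat{\mu}(kq_n)|$ converge uniformly in $a,b$, and losing a factor of $\log$ at any step would destroy the absolute bound $m_1$. Once both bounds are established, \eqref{E.L5} follows by taking the minimum, and the detailed numerical verification is relegated to the appendix together with the other effective lemmas from \cite{Pollington2022}.
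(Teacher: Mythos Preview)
Your outline is in the right Fourier-analytic neighbourhood, but it diverges from the paper's argument in a way that matters for the explicit constants you are claiming.

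The paper does not use a Selberg--Vaaler truncation at a cutoff degree $K_n$. Instead (following \cite{Pollington2022}) it approximates the indicator of the arc by the piecewise-linear smoothed indicator $\chi^{+}_{\delta,\varepsilon_n}$, where $\varepsilon_n\in(0,1]$ is a free smoothing parameter. This produces, uniformly in the choice of sequence $(\varepsilon_n)$, the single master inequality
\[
\left|\sum_{n=a}^{b}\mu(E_{q_n}^\gamma)-2\sum_{n=a}^{b}\psi(q_n)\right|
\;\le\;\sum_{n=a}^{b}\psi(q_n)\,\varepsilon_n\;+\;\sum_{n=a}^{b}\frac{3}{n^{A/B}\,\varepsilon_n^{1/2}}\,,
\]
and the two entries of the $\min$ in \eqref{E.L5} then come from two \emph{different choices of $\varepsilon_n$}, not from two different bounds on Fourier coefficients. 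For $m_1$ one takes $\varepsilon_n=\min\bigl(1,\Psi(n)^{-2}\bigr)$ and invokes Lemma~D4 of \cite{Pollington2022} to get $3+3\zeta(A/B-1)$. For $m_2$ one sets $\tau=A/B$ in \eqref{extra con L5}, chooses the \emph{constant} value $\varepsilon_n=2^{-2/3}$, and observes that $2^{-2/3}$ is the minimiser of $x+x^{-1/2}$ on $(0,\infty)$, with minimum value exactly $3/2^{2/3}$. That is where $m_2=3/2^{2/3}$ comes from; it is not an accident of the Fourier coefficient bounds you describe.

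Your alternative---bounding $|c_k^{(n)}|$ once by $1/(\pi|k|)$ and once by $2\psi(q_n)$---would certainly give two bounds of the right qualitative shape, but there is no reason the second should optimise to $3/2^{2/3}$, and your sketch gives no mechanism for it to do so. Also, for $m_1$ you write that the sum over $n$ gives ``$\zeta(A/B-1)$-type constants'' from $\sum n^{-A/B}$; note that $\sum n^{-A/B}=\zeta(A/B)$, so the appearance of $\zeta(A/B-1)$ in $m_1$ requires the extra factor $\varepsilon_n^{-1/2}\asymp\Psi(n)$, which again is a feature of the $\varepsilon_n$-parametrised inequality rather than of a straight truncation. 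If you want the constants to match the statement, you should work with the smoothed indicator and the free-parameter inequality above, then specialise $\varepsilon_n$ twice as the paper does.
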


As commented previously, the assumption \eqref{extra con L5} does not appear in Theorem \ref{E.Thm1} or \ref{E.Thm4};  this issue is handled in Lemma \ref{lastly count update additive constant thingy}.

We now give a bound for the value
\[S(m,n)\coloneqq \sum_{k \in \mathbb{Z} \backslash \left\{0\right\}} \Hat{W}^+_{m,n}(k)\Hat{\mu}(-k),\]
where 
\begin{equation}\label{W defn}
 W_{m,\,n}^{+}(k)=\left(\left(\sum_{p=0}^{q_{m}-1}\delta_{\frac{p+\gamma}{q_{m}}}(k)\right)*\chi^{+}_{\frac{\psi(q_{m})}{q_{m}},\varepsilon_{m}}(k)\right)\left(\left(\sum_{r=0}^{q_{n}-1}\delta_{\frac{r+\gamma}{q_{n}}}(k)\right)*\chi^{+}_{\frac{\psi(q_{n})}{q_{n}},\varepsilon_{n}}(k)\right),   
\end{equation}

where $*$ denotes convolution, $\delta_{x}$ is the Dirac delta-function at the point $x \in \mathbb{R}$ and
\begin{equation*}
    \chi_{\delta,\varepsilon}^{+}(x)\coloneqq 
    \begin{cases}
        1 & \text{if } \lvert \lvert x\rvert\rvert \leq \delta \\
    \frac{1}{\delta \varepsilon}(\delta-\lvert\lvert x\rvert\rvert)              & \text{if } (1-\varepsilon)\delta < \lvert\lvert x\rvert\rvert \leq \delta \\
    0 & \text{if } \lvert\lvert x \rvert\rvert > \delta,
    \end{cases}
\end{equation*}
where $\lvert \lvert x \rvert \rvert$ denotes the distance from $x \in \mathbb{R}$ to the nearest integer; that is, $\lvert\lvert x \rvert\rvert\coloneqq \min \{\lvert x-m\rvert\,:\,m \in \mathbb{Z}\}$.

\begin{lemma}[Effective Lemma 6 from \cite{Pollington2022}]\label{E.Lemma6} 
    Let $\mu$ be a probability measure supported on $F\subset[0,1]$. Let $\mathcal{A}=(q_n)_{n\in\mathbb{N}}$ be an increasing sequence of positive integers greater than 4. Suppose there exists $B\geq1$ and $C>0$ such that growth condition \eqref{eq32} is satisfied. Let $\gamma\in[0,1]$, $\psi:\mathbb{N}\to(0,1]$, let $(\varepsilon_n)_{n\in\mathbb{N}}$ be a sequence of real numbers in $(0,1]$ and let $\alpha \in (0,\,1)$. Suppose there exists $A>2B$ such that \eqref{eq8} is satisfied. Then for any $m,n\in\mathbb{N}$, if $m<n$ then 
    \begin{align}
    \label{E.L6}
        |S(m,n)|\leq9\nu C^{-A} \frac{\psi(q_m)}{n^{A/B}{\varepsilon_n}^{1/2}}+9\nu C^{-A} \left(1+\frac{1}{\alpha^A}\right)\frac{\psi(q_n)}{m^{A/B}{\varepsilon_m}^{1/2}}+\frac{9\cdot 2^A\nu C^{-A} }{n^{A/B}{\varepsilon_m}^{1/2}{\varepsilon_n}^{1/2}}+|T(m,n)|,
    \end{align} 
    where $T$ is defined based on the value of  $\alpha$ that
    \begin{equation}\label{Tmn}
        T(m,n)=T_{\alpha}(m,n)\coloneqq\mathop{\sum\sum}_{\substack{s,\,t \in \mathbb{Z}\backslash\{0\} \\ 1 \leq \lvert sq_{m}-tq_{n}\rvert<{q_{m}}^{\alpha}}}\Hat{W}^{+}_{q_{m},\gamma,\epsilon_{m}}(sq_{m})\Hat{W}^{+}_{q_{n},\gamma,\epsilon_{n}}(tq_{n})\Hat{\mu}(sq_{m}-tq_{n}),
    \end{equation}
    and $\Hat{W}^{+}_{q_{m},\gamma,\epsilon_{m}}(sq_{m})$ is the fourier transform of \eqref{W defn}.
\end{lemma}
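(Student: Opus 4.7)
My plan is to follow the non-effective argument of \cite{Pollington2022} while tracking every constant explicitly. First I would exploit the product structure of $W^+_{m,n}$: it is a product of two periodic functions, each of which is a Dirac comb at spacing $1/q_j$ convolved with the trapezoidal cut-off $\chi^+_{\psi(q_j)/q_j,\varepsilon_j}$. Consequently the Fourier transform of each factor is concentrated on the lattice of integer multiples of $q_j$, and the transform of the product is the convolution of the two, giving
\[\hat{W}^+_{m,n}(k)=\sum_{\substack{s,t\in\mathbb{Z}\\ sq_m-tq_n=k}}\hat{W}^+_{q_m,\gamma,\varepsilon_m}(sq_m)\,\hat{W}^+_{q_n,\gamma,\varepsilon_n}(tq_n).\]
Substituting this into the definition of $S(m,n)$ turns the sum over $k\neq 0$ into a double sum over pairs $(s,t)$ with $sq_m\neq tq_n$.

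Next I would partition this double sum according to the size of $|sq_m - tq_n|$. The pairs with $1\leq|sq_m-tq_n|<q_m^\alpha$ constitute exactly $T(m,n)$ and appear unchanged in \eqref{E.L6}. The pairs with $|sq_m-tq_n|\geq q_m^\alpha$ I would further split into three regimes: (i) $s=0$ and $t\neq 0$, so $|tq_n|\geq q_m^\alpha$; (ii) $t=0$ and $s\neq 0$, so $|sq_m|\geq q_m^\alpha$; (iii) both $s$ and $t$ nonzero. In each regime the decay hypothesis \eqref{eq8} supplies the estimate $|\hat{\mu}(tq_n-sq_m)|\leq \nu/\log^A|tq_n-sq_m|$, and the growth hypothesis \eqref{eq32} converts that logarithm into a factor of $C^A n^{A/B}$ or $C^A m^{A/B}$, depending on which of $q_m,q_n$ dominates the relevant combination.

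The remaining work is to control $|\hat{W}^+_{q_j,\gamma,\varepsilon_j}(\ell q_j)|$. Here I would use the standard Fourier estimates for trapezoids, namely $|\hat{\chi}^+_{\delta,\varepsilon}(u)|\leq\min\bigl(2\delta,\,c/(u^2\delta\varepsilon)\bigr)$, together with the observation that the Dirac comb contributes a factor whose modulus equals $1$ at multiples of $q_j$, to deduce bounds of the shape $\min\bigl(2\psi(q_j)/q_j,\,c/(\ell^2 q_j\psi(q_j)\varepsilon_j)\bigr)$. Summing these estimates over $s$ and $t$ in each regime yields geometric and zeta-type series that reproduce the three explicit summands of \eqref{E.L6}: regime (i) supplies $\psi(q_m)\,n^{-A/B}\varepsilon_n^{-1/2}$; regime (ii) supplies $(1+\alpha^{-A})\psi(q_n)\,m^{-A/B}\varepsilon_m^{-1/2}$, with the factor $1+\alpha^{-A}$ arising from splitting the constraint $|sq_m|\geq q_m^\alpha$ into the subranges $|s|\geq 1$ and the sparser set of $s$ forced by the lower bound $q_m^\alpha$; and regime (iii) supplies $2^A\,n^{-A/B}\varepsilon_m^{-1/2}\varepsilon_n^{-1/2}$.

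The principal difficulty I anticipate is regime (iii): there $|sq_m - tq_n|$ may be much smaller than either $|sq_m|$ or $|tq_n|$, so the decay of $\hat{\mu}$ cannot be paid for by the individual decay of the two transforms alone, and one must play off the constraint $|sq_m-tq_n|\geq q_m^\alpha$ against the trapezoidal decay to obtain an absolutely convergent double sum. Once all three regimes produce convergent bounds, distributing the prefactor $9\nu C^{-A}$ among them so that each summand of \eqref{E.L6} appears with the stated constant is routine bookkeeping, and the lemma then follows by the triangle inequality together with the definition of $T(m,n)$.
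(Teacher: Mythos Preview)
Your overall strategy---expressing $\hat W^+_{m,n}$ as a convolution supported on the lattice $\{sq_m-tq_n\}$ and then partitioning by the size of $|sq_m-tq_n|$---is exactly the route the paper (following \cite{Pollington2022}) takes. However, your three-regime split of the range $|sq_m-tq_n|\geq q_m^\alpha$ is too coarse, and your attribution of the factor $\alpha^{-A}$ to regime (ii) is incorrect.

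When $t=0$ one has $|sq_m|\geq q_m>q_m^\alpha$ automatically for every $s\neq 0$, so $|\hat\mu(sq_m)|\leq\nu/\log^A q_m\leq\nu C^{-A}m^{-A/B}$ and no $\alpha$ ever appears; your regime (ii) therefore accounts only for the ``$1$'' in the coefficient $1+\alpha^{-A}$, and the proposed explanation involving ``the sparser set of $s$ forced by the lower bound $q_m^\alpha$'' is vacuous. The $\alpha^{-A}$ contribution actually arises from a further subdivision of your regime (iii): one must separate the pairs $(s,t)$ with $|sq_m-tq_n|\geq q_n/2$ (for which $|\hat\mu|\leq 2^A\nu/\log^A q_n\leq 2^A\nu C^{-A}n^{-A/B}$, and this is the source of the third summand of \eqref{E.L6}) from those with $q_m^\alpha\leq|sq_m-tq_n|<q_n/2$. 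In the latter range the decay of $\hat\mu$ is only $\nu\alpha^{-A}C^{-A}m^{-A/B}$, so you cannot recover the factor $n^{-A/B}$ you claim uniformly for regime (iii). Instead, because the constraint $|sq_m-tq_n|<q_n/2$ leaves at most $O(1)$ choices of $t$ for each $s$, one bounds $|\hat W^+_{q_n,\gamma,\varepsilon_n}(tq_n)|$ crudely by $(2+\varepsilon_n)\psi(q_n)$ and sums the trapezoidal decay only over $s$, producing the $\alpha^{-A}\psi(q_n)m^{-A/B}\varepsilon_m^{-1/2}$ piece of the second summand. In the paper's notation these four pieces are $S_1,S_2,S_4,S_6$; your regime (iii) must be split into $S_4$ and $S_6$ for the argument to close and the constants to come out as stated.
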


The following two lemmas give effective bounds on the size of the quantity $T(m,\,n)$ appearing above. Lemma \ref{E.Lemma7} deals with lacunary sequences and Lemma \ref{E.Lemma8} deals with $\alpha$-separated sequences.

\begin{lemma}[Effective Lemma 7 from \cite{Pollington2022}]\label{E.Lemma7} 
    Let $\mu$ be a probability measure supported on $F\subset[0,1]$. Let $\mathcal{A}=(q_n)_{n\in\mathbb{N}}$ be an lacunary sequence of natural numbers, with the constant from the definition at \eqref{eq17} given by $K=K_0$. Let $\gamma\in[0,1],\,\alpha\in(0,1),\,\psi:\mathbb{N}\to(0,1]$ and $(\varepsilon_n)_{n\in\mathbb{N}}$ be a decreasing sequence of real numbers in $(0,1]$. Then for arbitrary $a,b\in\mathbb{N}$ with $a<b$, we have that
    \begin{align*}
        \mathop{\sum\sum}_{a\leq m<n\leq b}|T(m,n)|\leq11K'\sum_{n=a}^b\frac{\psi(q_n)}{{\varepsilon_n}^{1/2}},
    \end{align*}
    where $T(m,\,n)$ depends on $\alpha$ and it is given by \eqref{Tmn}, and $K'$ is given by \begin{align}
    \label{K'}
        K'=\frac{1}{K_0-1}>0.
     \end{align}
  \end{lemma}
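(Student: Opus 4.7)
The plan is to follow the argument of Lemma 7 in \cite{Pollington2022} while tracking every constant so that the implied constant becomes the explicit $11K'$. We exploit throughout that $(\varepsilon_n)_{n\in\mathbb{N}}$ is decreasing, which will let us absorb mixed terms $\psi(q_n)/\varepsilon_m^{1/2}$ (for $m<n$) into the model term $\psi(q_n)/\varepsilon_n^{1/2}$ associated with the larger index.

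First, I would record sharp pointwise bounds for $\hat{W}^+_{q_m,\gamma,\varepsilon_m}(sq_m)$. Since $\chi^+_{\delta,\varepsilon}$ is a piecewise-linear ``trapezoidal'' kernel of $L^1$-mass on the order of $\psi(q_m)/q_m$, standard Fourier analysis (the trivial $L^1$ bound in one regime, and two integrations by parts in the other) yields a two-regime estimate of the shape
\[|\hat{W}^+_{q_m,\gamma,\varepsilon_m}(sq_m)|\;\leq\;C_0\min\!\left(\frac{\psi(q_m)}{q_m},\,\frac{1}{s^{2}\varepsilon_m}\right)\]
for some explicit $C_0$, with the analogous estimate at level $n$. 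This is where the first numerical units of the constant $11$ accumulate.

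Second, for fixed $m<n$ in $[a,b]$ I would bound $|T(m,n)|$ by inserting $|\hat{\mu}(k)|\leq 1$ and counting the pairs $(s,t)\in(\mathbb{Z}\setminus\{0\})^{2}$ subject to the resonance constraint $1\leq|sq_m-tq_n|<q_m^{\alpha}$. The key geometric input is the lacunarity $q_n\geq K_0 q_m$: for each $s$, the admissible $t$'s lie in an interval of length at most $2q_m^{\alpha}/q_n$ centred at $sq_m/q_n$, so that the number of such $t$'s is $O(1+q_m^{\alpha}/q_n)$. Combining this with the Fourier bound above and summing over $s$ against the decay factor $s^{-2}$ produces an estimate of the shape
\[|T(m,n)|\;\leq\;C_{1}\,\frac{q_m}{q_n}\cdot\frac{\psi(q_n)}{\varepsilon_n^{1/2}}\;+\;(\text{symmetric lower-order terms, absorbed via monotonicity of }\varepsilon_n).\]

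Third, with $n\in[a,b]$ fixed I would sum in $m$ using $q_m/q_n\leq K_0^{\,m-n}$:
\[\sum_{m=a}^{n-1}\frac{q_m}{q_n}\;\leq\;\sum_{j=1}^{\infty}K_0^{-j}\;=\;\frac{1}{K_0-1}\;=\;K',\]
which extracts the $K'$ factor. Summing then over $n\in[a,b]$ yields a bound of the form $\leq C_{1}K'\sum_{n=a}^{b}\psi(q_n)/\varepsilon_n^{1/2}$, and adding the contributions from the other Fourier regimes (the ``small $s$'' and cross terms, each controlled in the same way) gives the total constant. The main obstacle is not conceptual but bookkeeping: one must verify that the constants coming from the two-regime Fourier estimate, from the counting of near-resonances in $1\leq|sq_m-tq_n|<q_m^{\alpha}$, and from the geometric telescoping combine to at most $11$, while making sure no stray logarithmic factor in $q_n$ or $\psi(q_n)$ creeps in through the integration-by-parts step.
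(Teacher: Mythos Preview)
Your overall architecture is right, and the final step---summing $\sum_{m<n} q_m/q_n\leq\sum_{j\geq 1}K_0^{-j}=K'$---matches the paper exactly. But the mechanism you sketch for the pointwise bound $|T(m,n)|\leq 11(q_m/q_n)\psi(q_n)/\varepsilon_n^{1/2}$ is not the paper's, and as written it would not close. Two concrete issues: your two-regime Fourier bound is misstated (the correct versions, (51)--(52) of \cite{Pollington2022}, are $|\hat{W}^+_{q_m,\gamma,\varepsilon_m}(sq_m)|\leq(2+\varepsilon_m)\psi(q_m)$ and $|\hat{W}^+_{q_m,\gamma,\varepsilon_m}(sq_m)|\leq\tfrac{4}{\pi^2}(s^2\psi(q_m)\varepsilon_m)^{-1}$, with $\psi(q_m)$ in the \emph{denominator} of the decay term), and ``summing over $s$ against $s^{-2}$'' after counting $t$'s per $s$ does not by itself produce the ratio $q_m/q_n$.

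The paper instead sums over $t$ alone: since $|sq_m-tq_n|<q_m^{\alpha}<q_m$, each $t$ determines at most one $s$, and that $s$ satisfies $s\approx tq_n/q_m$. Applying the trivial bound $(2+\varepsilon_n)\psi(q_n)$ to the $n$-factor and the two-regime bound to the $m$-factor at this $s$ gives
\[
|T(m,n)|\leq\sum_{t\geq 1}\min\!\left(\frac{4}{\pi^{2}}\frac{q_m^{2}}{q_n^{2}t^{2}\psi(q_m)\varepsilon_m},\,(2+\varepsilon_m)\psi(q_m)\right)(2+\varepsilon_n)\psi(q_n);
\]
the factor $q_m^2/q_n^2$ appears precisely via the substitution $s\approx tq_n/q_m$. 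One then splits this $t$-sum at the threshold $D=q_m/(q_n\psi(q_m)\varepsilon_m^{1/2})$: the range $t\leq D$ contributes $9D\psi(q_m)\psi(q_n)=9(q_m/q_n)\psi(q_n)/\varepsilon_m^{1/2}$, and the tail $\sum_{t>D}t^{-2}$ supplies the remainder, yielding $11$. Monotonicity of $(\varepsilon_n)$ is used only at the end to replace $\varepsilon_m^{1/2}$ by $\varepsilon_n^{1/2}$. The point you are missing is that the resonance forces $|s|\geq q_n/q_m$, which is what makes the $s^{-2}$ decay deliver the correct power of $q_m/q_n$; without building this in (as the paper does by parametrising in $t$), a naive sum over $s$ from $1$ gives the wrong answer.
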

    
\begin{lemma}[Effective Lemma 8 from \cite{Pollington2022}]\label{E.Lemma8} 
    Let $\mu$ be a probability measure supported on $F\subset[0,1]$. Let $\mathcal{A}=(q_n)_{n\in\mathbb{N}}$ be an $\alpha$-separated increasing sequence with $m_0=1$. Let $\gamma\in[0,1]$, $\psi:\mathbb{N}\to(0,1]$ and $(\varepsilon_n)_{n\in\mathbb{N}}$ be a sequence of real numbers in $(0,1]$. Suppose that for any $n\in\mathbb{N}$, \begin{align}
        \label{eq91}
        \psi(q_n)&\geq n^{-9}
    \end{align} and ${\varepsilon_n}^{-1}\leq2n$. Then for arbitrary $a,b\in\mathbb{N}$, if $a<b$ then 
    \begin{align*}
        \mathop{\sum\sum}_{a\leq m <n\leq b}|T(m,n)|\leq2\sum_{n=a}^b\psi(q_n),
    \end{align*}
    where $T(m,\,n)$ is given by \eqref{Tmn}.
\end{lemma}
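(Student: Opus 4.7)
The plan is to follow the structure of Lemma 8 in \cite{Pollington2022}, tracking every constant so that the bound $2$ on the right-hand side is fully explicit. Since no decay hypothesis on $\hat\mu$ is available here, I use only the trivial bound $|\hat\mu|\leq 1$, giving
\[|T(m,n)| \leq \mathop{\sum\sum}_{\substack{s,t\in\mathbb{Z}\setminus\{0\} \\ 1\leq |sq_m-tq_n|<q_m^\alpha}} \bigl|\hat{W}^+_{q_m,\gamma,\varepsilon_m}(sq_m)\bigr|\,\bigl|\hat{W}^+_{q_n,\gamma,\varepsilon_n}(tq_n)\bigr|.\]
Because $\sum_{p=0}^{q-1}\delta_{(p+\gamma)/q}$ has Fourier transform supported on integer multiples of $q$, the convolution in \eqref{W defn} factorises to $\hat{W}^+_{q,\gamma,\varepsilon}(sq) = q\,e^{-2\pi i s\gamma}\,\hat\chi^+_{\psi(q)/q,\varepsilon}(sq)$ for $s\in\mathbb{Z}$; the trapezoidal shape of $\chi^+$ then gives the explicit two-regime bound
\[\bigl|\hat{W}^+_{q,\gamma,\varepsilon}(sq)\bigr| \leq \min\!\left(2\psi(q),\;\frac{c_0}{\varepsilon\,\psi(q)\,s^2\,q}\right)\]
for an absolute constant $c_0$.

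The key structural input is $\alpha$-separation with $m_0=1$: any positive pair $(s,t)$ contributing to $T(m,n)$ satisfies $s>m^{12}$. The case where $s$ and $t$ have opposite signs is vacuous for $\alpha\in(0,1)$ since then $|sq_m-tq_n|\geq q_m>q_m^\alpha$, while the case $s,t<0$ is obtained from the positive case by $(s,t)\mapsto(-s,-t)$. For $m<n$ the constraint $|sq_m-tq_n|<q_m^\alpha<q_n$ also forces, for each fixed $s$, the admissible values of $t$ to lie in an interval of length strictly less than $2$, so the inner $t$-count is $O(1)$. Using the fast-decay half of the Fourier bound for $|\hat{W}^+_{q_m}|$ (valid because $s>m^{12}$) and the trivial half $2\psi(q_n)$ for $|\hat{W}^+_{q_n}|$, I obtain
\[|T(m,n)| \leq c_1\,\psi(q_n)\,\sum_{s>m^{12}} \frac{1}{\varepsilon_m\,\psi(q_m)\,s^2\,q_m},\]
plus a symmetric companion term with the roles of $m$ and $n$ swapped.

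The hypotheses $\psi(q_n)\geq n^{-9}$ and $\varepsilon_n^{-1}\leq 2n$ force $\varepsilon_m^{-1}\psi(q_m)^{-1}$ to grow at most polynomially in $m$, so $\sum_{s>m^{12}}s^{-2}\ll m^{-12}$ dominates comfortably and yields $\sum_{m<n}|T(m,n)|\leq 2\psi(q_n)$ uniformly in $n$. Summing over $n\in[a,b]$ then gives the stated bound. The main obstacle is quantitative rather than conceptual: obtaining the clean constant $2$ rather than some larger numerical value requires using $\alpha$-separation at the level of individual $(s,t)$-pairs, and carefully balancing the two regimes of the Fourier-transform estimate so that the hypotheses $\psi(q_n)\geq n^{-9}$ and $\varepsilon_n^{-1}\leq 2n$ are invoked in the most economical way. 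As the paper indicates, the detailed bookkeeping is deferred to the appendix.
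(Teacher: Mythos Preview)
Your approach matches the paper's: restrict to $s>m^{12}$ via $\alpha$-separation, apply the quadratic-decay Fourier bound to $\hat W^+_{q_m}(sq_m)$ and the trivial bound to $\hat W^+_{q_n}(tq_n)$, sum $\sum_{s>m^{12}} s^{-2}\ll m^{-12}$, and use $\psi(q_m)\geq m^{-9}$ together with $\varepsilon_m^{-1}\leq 2m$ to reach $|T(m,n)|\leq\psi(q_n)/m^2$, so that $\sum_m m^{-2}<\pi^2/6<2$ finishes. Two small corrections: the decay bound should read $|\hat W^+_{q,\gamma,\varepsilon}(sq)|\leq c_0/(\varepsilon\,\psi(q)\,s^2)$ with no extra factor of $q$ in the denominator, and there is no symmetric companion term---$\alpha$-separation constrains only $s$, and the paper simply bounds $|\hat W^+_{q_n}(tq_n)|\leq(2+\varepsilon_n)\psi(q_n)$ throughout.
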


As commented previously, the assumption of \eqref{eq91} does not appear in Theorem \ref{E.Thm1} or \ref{E.Thm4},  this issue is handled in Lemma \ref{lastly count update additive constant thingy}.

To prove the main theorems, we need effective versions of Propositions 1 and 2 from \cite{Pollington2022}. We state these here and leave the proofs to the appendix. 

 \begin{prop}[Effective Proposition 1 from \cite{Pollington2022}]\label{E.Prop1} 
Let $F,\, \mu,\, \mathcal{A}=(q_{n})_{n \in \mathbb{N}},\, \gamma$ and $\psi$ be as in Theorem \ref{E.Thm1}. Further, assume that $\psi$ satisfies \eqref{extra con L5}. Then, for arbitrary $a,\,b \in \mathbb{N}$, with $a<b$ we have that
\begin{align*}
    2\mathop{\sum\sum}_{a\leq m<n\leq b}\mu(E_{q_m}^\gamma\cap E_{q_n}^\gamma)
    &\leq\left(\sum_{n=a}^b\mu(E_{q_n}^\gamma)\right)^2+48\left(\sum_{n=a}^b\psi(q_n)\right)^{4/3}\log^+{\left(\sum_{n=a}^b\psi(q_n)\right)}\\
    &+c_1\left(\sum_{n=a}^b\psi(q_n)\right)^{4/3}+c_2\sum_{n=a}^b\psi(q_n)
    , \end{align*}
    where $c_1$ and $c_2$ are given in \eqref{c1} and \eqref{c2} respectively. 
\end{prop}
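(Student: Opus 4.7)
The plan is to follow the structure of the original proof of Proposition 1 in \cite{Pollington2022}, retaining all constants via the effective Lemmas \ref{E.Lemma5}, \ref{E.Lemma6} and \ref{E.Lemma7}. First I would replace each indicator $\mathds{1}_{E_{q_n}^\gamma}$ by the smooth majorant $\chi^+_{\psi(q_n)/q_n,\varepsilon_n}$, with a family $(\varepsilon_n)\subset(0,1]$ to be chosen later, and form $W^+_{m,n}$ as in \eqref{W defn}. Writing $\Psi=\sum_{n=a}^b\psi(q_n)$ for this proof, this yields the pointwise majorization
\[ \mu(E_{q_m}^\gamma\cap E_{q_n}^\gamma)\leq \int W^+_{m,n}(x)\,\mathrm{d}\mu(x)=\widehat{W}^+_{m,n}(0)+S(m,n), \]
where the equality uses Fourier inversion together with $\widehat{\mu}(0)=1$, and $S(m,n)$ is the quantity appearing in Lemma \ref{E.Lemma6}.

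Summing over $a\leq m<n\leq b$, the zero-frequency term $\widehat{W}^+_{m,n}(0)$ assembles (up to a small resonant correction between $q_m$ and $q_n$) into $(2\sum_{n=a}^b\psi(q_n))^2$ plus an $\varepsilon$-correction of order $\varepsilon\,\Psi^2$. Lemma \ref{E.Lemma5} then allows the exchange of $2\sum_{n=a}^b\psi(q_n)$ for $\sum_{n=a}^b\mu(E_{q_n}^\gamma)$ at the cost of an additive error controlled by $m_1$ or $m_2\Psi$ from \eqref{E.L5}. The nonzero-frequency contribution $\sum_{m<n} S(m,n)$ is bounded by Lemma \ref{E.Lemma6}, whose residual $T(m,n)$ is summed via Lemma \ref{E.Lemma7} to produce a term of the form $11K'\sum_{n=a}^b \psi(q_n)/\varepsilon_n^{1/2}$; the lacunarity of $\mathcal{A}$ is used precisely here.

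The decisive technical step is the choice of the smoothing parameters $\varepsilon_n$: a uniform choice $\varepsilon_n\equiv\Psi^{-2/3}$ balances the $\varepsilon\Psi^2$-correction from the main term against the $\varepsilon^{-1/2}$-loss from Lemma \ref{E.Lemma7}, both producing the desired $\Psi^{4/3}$ contribution. The additional $\log^+\Psi$ factor enters through a dyadic decomposition of $[a,b]$ (splitting by powers of two in the partial sums of $\psi(q_n)$), which is needed to handle the mixed cross terms $\psi(q_m)/(n^{A/B}\varepsilon_n^{1/2})$ and $\psi(q_n)/(m^{A/B}\varepsilon_m^{1/2})$ from \eqref{E.L6}. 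Assumption \eqref{extra con L5} is invoked only to guarantee $\psi(q_n)\geq 3n^{-\tau}$, which Lemma \ref{E.Lemma5} needs and which also prevents the optimal $\varepsilon_n$ from collapsing.

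The main obstacle will be the precise bookkeeping required to trace each of $48$, $c_1$ and $c_2$ back to specific sub-computations: $c_1=22/(K_0-1)=22K'$ comes from the clean $\Psi^{4/3}$-tail produced by Lemma \ref{E.Lemma7} after the $\varepsilon_n$-choice; the coefficient $48$ of $\Psi^{4/3}\log^+\Psi$ arises from the dyadic aggregation of the mixed terms; and $c_2$ bundles together the $(3/2^{2/3}+1)(1+\zeta(A-1))$-contribution born of combining Lemma \ref{E.Lemma5} with Lemma \ref{E.Lemma6}, a further $8/(K_0-1)$ from Lemma \ref{E.Lemma7}, and the $18\nu C^{-A}$-bundle of $\zeta$-type sums from the three terms of \eqref{E.L6} (the $(2+\alpha^{-A})$ factor matching the second term and the $2^{A+1}\zeta(A/2)$ factor matching the third).
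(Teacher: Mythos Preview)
Your overall skeleton---majorize by $W^+_{m,n}$, split into zero frequency plus $S(m,n)$, feed $S(m,n)$ through Lemmas \ref{E.Lemma6} and \ref{E.Lemma7}, then use Lemma \ref{E.Lemma5} to trade $4\Psi^2$ for $(\sum\mu(E_{q_n}^\gamma))^2$---matches the paper, and your identification of $c_1=22K'$ via Lemma \ref{E.Lemma7} is correct. However, two points diverge from the paper, and the second is a genuine misattribution.

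First, the paper does \emph{not} take $\varepsilon_n$ uniform. It sets $\varepsilon_n=\min\bigl\{2^{-\delta},(\sum_{k=a}^n\psi(q_k))^{-\delta}\bigr\}$ with $\delta=2/3$, an $n$-dependent choice. This is what yields the crude bound $\varepsilon_n^{-1}<2n$, which in turn lets the three Fourier terms in \eqref{E.L6} be summed as plain $\zeta$-series contributing only to $c_2\Psi$ (no $\Psi^{4/3}$, no logarithm). With your uniform $\varepsilon=\Psi^{-2/3}$ those same terms would carry an extra factor $\Psi^{1/3}$ or $\Psi^{2/3}$, and you would not recover $c_2$ as stated.

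Second, the term $48\,\Psi^{4/3}\log^+\Psi$ does \emph{not} come from any dyadic decomposition applied to the mixed terms of \eqref{E.L6}; there is no dyadic splitting of $[a,b]$ anywhere in the argument. It comes entirely from the \emph{zero-frequency} side: the bound \eqref{eq88+} contains the correction $12\,\varepsilon_m\psi(q_m)\psi(q_n)$, and with the $n$-dependent $\varepsilon_m$ above the paper makes inequality (108) of \cite{Pollington2022} explicit as
\[
\mathop{\sum\sum}_{a\le m<n\le b}\varepsilon_m\psi(q_m)\psi(q_n)\;\le\; 4\Bigl(\sum_{n=a}^b\psi(q_n)\Bigr)^{2-\delta}\log^+\Bigl(\sum_{n=a}^b\psi(q_n)\Bigr)
\]
via an Abel-summation-type estimate. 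The logarithm enters precisely here, and its combination with the factor from \eqref{eq88+} and the doubling in the proposition's left-hand side produces the coefficient $48$. With your uniform $\varepsilon$ this double sum would simply be $\le\tfrac12\Psi^{4/3}$ with no logarithm, so the mechanism you describe cannot account for the stated term.
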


\begin{prop}[Effective Proposition 2 from \cite{Pollington2022}]\label{E.Prop2} 
Let $F,\,\mu,\,\mathcal{A}=(q_{n})_{n\in \mathbb{N}},\,\gamma$ and $\psi$ be as in Theorem \ref{E.Thm4}. Further, assume $\psi$ satisfies \eqref{extra con L5}, $q_{1}>4$ and that $\mathcal{A}$ is $\alpha$-separated with the implicit constant $m_{0}=1$. Then, for arbitrary $a,\,b \in \mathbb{N}$ with $a<b$, we have that
    \begin{align*}
        2\mathop{\sum\sum}_{a\leq m<n\leq b}\mu(E_{q_m}^\gamma\cap E_{q_n}^\gamma)
        \leq \left(\sum_{n=a}^b\mu(E_{q_n}^\gamma)\right)^2
         +8\mathop{\sum\sum}_{a\leq m<n\leq b}\gcd{(q_m,q_n)}\min{\left(\frac{\psi(q_m)}{q_m},\frac{\psi(q_n)}{q_n}\right)} \\
         +48\left(\sum_{n=a}^b\psi(q_n)\right)\log^+{\left(\sum_{n=a}^b\psi(q_n)\right)}
         +c_3\sum_{n=a}^b\psi(q_n)
    , \end{align*}
    where $c_3$ is given in \eqref{c3}.
\end{prop}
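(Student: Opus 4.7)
The plan is to follow the structure of the proof of Proposition 2 in \cite{Pollington2022} while carrying explicit constants through each estimate, so that the final upper bound combines into the constant $c_3$ of \eqref{c3}. As in the original argument, I begin by bounding $\mu(E_{q_m}^\gamma \cap E_{q_n}^\gamma) \leq \int W_{m,n}^{+}\, d\mu$, where $W_{m,n}^{+}$ is the smooth majorant from \eqref{W defn}, and expand by Parseval as $\hat W^+_{m,n}(0) + \sum_{k \neq 0} \hat W^+_{m,n}(k)\,\hat\mu(-k)$. The zero-frequency term gives an upper bound comparable to $\mu(E_{q_m}^\gamma)\mu(E_{q_n}^\gamma)$, and after summing over $a \leq m < n \leq b$ and using Lemma \ref{E.Lemma5} to convert single-variable sums into $2 \sum \psi(q_n)$ with controlled error, this produces the first term $\left(\sum_{n=a}^b \mu(E_{q_n}^\gamma)\right)^2$ on the right-hand side.

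Next I decompose the $k \neq 0$ part according to whether $s q_m - t q_n = 0$ (the diagonal), $0 < |sq_m - tq_n| < q_m^{\alpha}$ (the close-range $T(m,n)$ from \eqref{Tmn}), or $|sq_m - tq_n| \geq q_m^\alpha$ (the long-range $S(m,n)$ from Lemma \ref{E.Lemma6}). The diagonal reduces to a sum over common multiples of $q_m$ and $q_n$, and upon choosing $\varepsilon_n^{-1} \leq 2n$ it yields an upper bound matching $8 \sum\sum \gcd(q_m, q_n)\min(\psi(q_m)/q_m, \psi(q_n)/q_n)$ verbatim. For the close-range part, Lemma \ref{E.Lemma8} applies because $\mathcal{A}$ is $\alpha$-separated with $m_0 = 1$, because \eqref{extra con L5} with $\tau = 9$ forces $\psi(q_n) \geq n^{-9}$ as in \eqref{eq91}, and because $\varepsilon_n^{-1} \leq 2n$; it delivers at most $2 \sum_{n=a}^b \psi(q_n)$, which after the factor $2$ on the left contributes the leading $4$ inside $c_3$. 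For the long-range part, summing the three terms of Lemma \ref{E.Lemma6} over $a \leq m < n \leq b$ together with the $\alpha$-separated growth \eqref{eq32} reduces each summand to a series in $n^{-A/B}$ that, because $A > 2B$, is controlled by $\sqrt{2}\zeta(A/B - 1/2)$ and $\zeta(A/(2B))$; collecting these yields precisely the $18 \nu C^{-A}\bigl(\sqrt{2}\zeta(A/B - 1/2)(2+\alpha^{-A}) + 2^{A+1}\zeta(A/(2B))\bigr)$ summand of $c_3$.

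The logarithmic cross term $48 (\sum \psi(q_n))\log^+(\sum \psi(q_n))$ will arise from a dyadic partition of the frequency range inside the $k = 0$ block of $\hat W_{m,n}^{+}$, where the number of dyadic blocks is $O(\log^+(\sum \psi(q_n)))$ and each contributes at most $O(\sum \psi(q_n))$, with the implicit constant tracked to be $48$ exactly as in Proposition \ref{E.Prop1}. The remaining single-sum errors from Lemma \ref{E.Lemma5} coincide with those appearing in Proposition \ref{E.Prop1}, and so contribute exactly the $c_2$ portion of $c_3 = 4 + 18\nu C^{-A}(\cdots) + c_2$. The main obstacle will be the careful bookkeeping needed to verify that the three error terms in Lemma \ref{E.Lemma6} combine into the stated constants without loss, and that the choice $\varepsilon_n^{-1} \leq 2n$ is simultaneously admissible in Lemmas \ref{E.Lemma6} and \ref{E.Lemma8}; once those matchings are checked, the remainder of the argument is routine assembly via the triangle inequality.
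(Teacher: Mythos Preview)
Your high-level plan---majorize by $W_{m,n}^+$, expand by Parseval, and feed the pieces into the effective Lemmas---matches the paper. But two of your attributions for where the right-hand terms originate are wrong, and the first is a structural error that would block the argument as written. You place the ``diagonal'' $sq_m-tq_n=0$ inside the $k\neq 0$ part of the Parseval expansion; this is impossible, because the Fourier index \emph{is} $k=sq_m-tq_n$, so the diagonal is precisely the $k=0$ contribution. In the paper the $\gcd(q_m,q_n)\min(\psi(q_m)/q_m,\psi(q_n)/q_n)$ term sits inside $\hat W^+_{m,n}(0)$ via the corrected bound \eqref{eq88+}, which comes from the Lebesgue-measure estimate for $\lambda(E_{q_m}^\gamma\cap E_{q_n}^\gamma)$ proved at the start of the appendix. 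There is no separate diagonal piece of the nonzero-frequency sum $S(m,n)$ to analyse; $S(m,n)$ already excludes $sq_m=tq_n$.

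Second, the logarithm does not come from a ``dyadic partition of the frequency range inside the $k=0$ block'': $\hat W^+_{m,n}(0)$ is a single number and there is nothing to partition. In the paper the term $48(\sum\psi)\log^+(\sum\psi)$ arises from the middle summand $12\,\varepsilon_m\psi(q_m)\psi(q_n)$ of \eqref{eq88+}, summed over $a\le m<n\le b$ with the \emph{specific} choice $\varepsilon_n=\min\{2^{-\delta},(\sum_{k=a}^n\psi(q_k))^{-\delta}\}$, here $\delta=1$; the effective form of (108) in \cite{Pollington2022} then gives $\sum\sum\varepsilon_m\psi(q_m)\psi(q_n)\le 4(\sum\psi)\log^+(\sum\psi)$, and the factor $2$ on the left-hand side of the proposition turns $24$ into $48$. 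Merely requiring $\varepsilon_n^{-1}\le 2n$ (which you need for Lemma~\ref{E.Lemma8}) is not enough to generate the logarithm---$\varepsilon_n$ must be tied to the running partial sums of $\psi$. Once you correct these two points (read both the $\gcd$ and the $\varepsilon_m$ terms directly off \eqref{eq88+}, and fix $\varepsilon_n$ as above), the remaining assembly via Lemmas~\ref{E.Lemma6}, \ref{E.Lemma8} and the conversion $4(\sum\psi)^2\le(\sum\mu(E_{q_n}^\gamma))^2+4m_1(m_2+1)\sum\psi$ from Lemma~\ref{E.Lemma5} proceeds as in the paper.
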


Finally, we can make use of Theorem \ref{thm3} to make Theorems 1 and 4 from \cite{Pollington2022} effective, modulo the error term from assumptions \eqref{extra con L5} and \eqref{eq91} which we make effective immediately after. We give these proofs in full here to demonstrate the use of Theorem \ref{thm3}. For Theorem 1 from \cite{Pollington2022}, we take the following parameter in Theorem \ref{thm3}: 
\begin{align*}
    \varphi_n&=\psi(q_n)\Psi(n)^{1/3}\left(\log^+\Psi(n)+1\right)+2\psi(q_n).
\end{align*}

Notice that for any $\varepsilon>0$ and $N\in\mathbb{N}$, \begin{align*}
    \varPhi^{1/2}(N)\log^{3/2+\varepsilon}{\varPhi(N)}+2\leq2\Psi^{2/3}(N)(\log{\Psi(N)+2})^{2+\varepsilon}
, \end{align*}
where $\varPhi$ is as given in the original proof or in \eqref{phithm1}. This proves Theorem \ref{E.Thm1}.

For Theorem 4 from \cite{Pollington2022}, we set parameters of Theorem \ref{thm3} as follows:
\begin{align*}
    \varphi_n&=\psi(q_n)\left(\log^+\Psi(n)+2\right)+\sum_{m=1}^{n-1}\gcd{(q_m,q_n)}\min{\left(\frac{\psi(q_m)}{q_m},\frac{\psi(q_n)}{q_n}\right)}, \\
    \varPhi(N)&=\sum_{n=1}^N\varphi_n\leq \Psi(N)\left(\log^+{\Psi(N)}+2\right)+E(N)
, \end{align*}
where \begin{align*}
    E(N)&=\sum_{m=1}^{N-1}\sum_{n=m+1}^N \gcd{(q_m,q_n)}\min{\left(\frac{\psi(q_m)}{q_m},\frac{\psi(q_n)}{q_n}\right)}
. \end{align*} 
Theorem \ref{E.Thm4} follows.

We now consider the final, extra term in each each effective theorem, which allow us to remove the two extra assumptions in Lemma \ref{E.Lemma5} and Lemma \ref{E.Lemma8}, namely conditions \eqref{extra con L5} and \eqref{eq91}. In the following lemma, we consider this extra term.

\begin{lemma}\label{lastly count update additive constant thingy}
    Let $\psi:\mathbb{N}\to(0,1]$ be a function and $\mathcal{A}=(q_n)_n$ be an increasing sequence of positive integers. Suppose there exists $\nu >0$ and $A>2$ such that for any $t\in\mathbb{R}\setminus[-1,1]$, \eqref{eq8} is satisfied. Suppose there exists $B\geq1$ and $C>0$ such that $A>2B$ and \eqref{eq32} is satisfied. Suppose $\Psi(n)=\sum_{k=1}^n\psi(k)$ is unbounded. Let $\omega:\mathbb{N}\to[0,+\infty)$ be a function. Suppose \begin{align*}
        \sum_{n=1}^\infty\omega(n)<+\infty.
    \end{align*} Define an auxiliary function $\psi^*(q_n)=\max{(\psi(q_n),\omega(n))}$. Then for every $\delta>0$ there exists $F_{\delta}\subset F$ such that $\mu(F_{\delta})<\delta$ and for any $x\in F\setminus F_{\delta}$ and $N\in\mathbb{N}$, \begin{align*}
        \left|R(x,N;\gamma,\psi, \mathcal{A})-R(x,N;\gamma,\psi^*,\mathcal{A})\right|\leq t_{\delta}
    , \end{align*} 
    where \begin{align}
        \label{t_delta}
        t_{\delta}=\min{\left\{t\in\mathbb{N}:\sum_{n=t}^\infty
        \left(
        \omega(n)+\frac{\nu}{C^A n^{A/B}}
        \right)<\frac{\delta}{3}\right\}}
    . \end{align}
\end{lemma}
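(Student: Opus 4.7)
The plan is to reduce the claim to a Borel--Cantelli tail bound for the auxiliary sets $A_n \coloneqq \{x\in F : \|q_n x - \gamma\|\le \omega(n)\}$, paralleling the Fourier calculation behind Lemma~\ref{E.Lemma5} but without the monotonicity hypothesis \eqref{extra con L5} on $\omega$.

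First I would reduce the difference to a single counting function. Since $\psi^*\ge\psi$ pointwise, $R(x,N;\gamma,\psi^*,\mathcal{A}) - R(x,N;\gamma,\psi,\mathcal{A})$ is non-negative, and each of its summands
\[
\mathbf{1}\bigl[\|q_n x-\gamma\|\le \psi^*(q_n)\bigr]-\mathbf{1}\bigl[\|q_n x-\gamma\|\le \psi(q_n)\bigr]
\]
vanishes when $\omega(n)\le\psi(q_n)$ (so that $\psi^*(q_n)=\psi(q_n)$), and is at most $\mathbf{1}_{A_n}(x)$ when $\omega(n)>\psi(q_n)$ (as then $\psi^*(q_n)=\omega(n)$). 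Summing over $n$,
\[
\bigl|R(x,N;\gamma,\psi,\mathcal{A})-R(x,N;\gamma,\psi^*,\mathcal{A})\bigr|\le \widetilde R(x,N)\coloneqq\sum_{n=1}^{N}\mathbf{1}_{A_n}(x).
\]

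Next I would estimate $\mu(A_n)$ by Fourier analysis. Expanding the $1$-periodic indicator $\chi_{\omega(n)}$ of $[-\omega(n),\omega(n)]\bmod 1$ as a Fourier series, with $c_0=2\omega(n)$ and $|c_k|\le \min\bigl(2\omega(n),1/(\pi|k|)\bigr)$, substituting $y=q_n x-\gamma$ and integrating against $\mu$ gives
\[
\mu(A_n)=2\omega(n)+\sum_{k\ne 0} c_k\,e^{-2\pi ik\gamma}\,\hat\mu(-kq_n).
\]
For $k\ne 0$ with $|k|q_n>1$, \eqref{eq8} together with $\log(|k|q_n)\ge \log q_n\ge Cn^{1/B}$ yields $|\hat\mu(kq_n)|\le \nu/(C^A n^{A/B})$. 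Splitting the $k$-sum at $|k|\sim 1/(2\pi\omega(n))$ (using $|c_k|\le 2\omega(n)$ below the split and $|c_k|\le 1/(\pi|k|)$ above it, together with the standard tail bound $\int u^{-1}\log^{-A}(uq_n)\,du\ll \log^{1-A}q_n/(A-1)$) produces an effective per-term estimate
\[
\mu(A_n)\le 2\omega(n)+\frac{C'\nu}{C^An^{A/B}}
\]
with an absolute constant $C'$; the finitely many indices $n$ for which $q_n$ is too small are absorbed by choosing the threshold large.

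Finally I would apply the first Borel--Cantelli estimate. Set $F_\delta\coloneqq \bigcup_{n\ge t_\delta}A_n$. Countable subadditivity and the previous bound give
\[
\mu(F_\delta)\le\sum_{n\ge t_\delta}\mu(A_n)\le C''\sum_{n\ge t_\delta}\left(\omega(n)+\frac{\nu}{C^An^{A/B}}\right),
\]
and the factor $\delta/3$ in the definition \eqref{t_delta} of $t_\delta$ is precisely what is needed to absorb $C''$ and force $\mu(F_\delta)<\delta$. For any $x\in F\setminus F_\delta$, $x\notin A_n$ for all $n\ge t_\delta$, so $\widetilde R(x,N)\le t_\delta$ uniformly in $N$, and the reduction in the first step closes the argument.

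The principal technical obstacle is the Fourier estimate in the second step: because $\omega$ is only required to be summable and no lower bound of the form \eqref{extra con L5} is available, the standard arguments from Lemma~\ref{E.Lemma5} do not apply verbatim, and a direct two-range truncation of the Fourier expansion of $\chi_{\omega(n)}$ is needed. Matching the absolute constant produced there with the $1/3$ factor built into \eqref{t_delta} is where the bookkeeping lies; the remainder of the proof is a routine first-moment tail bound.
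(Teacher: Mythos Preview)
Your strategy is correct and matches the paper's: reduce to the counting function for the auxiliary events $A_n=\{x:\|q_nx-\gamma\|\le\omega(n)\}$, bound $\mu(A_n)$ via the Fourier decay hypothesis, and apply a first-moment tail estimate to $F_\delta=\bigcup_{n\ge t_\delta}A_n$.

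The one substantive difference is in the second step. The paper does not carry out a direct Fourier expansion of the raw indicator $\chi_{\omega(n)}$ as you do; instead it invokes Lemma~2 of \cite{Pollington2022}, which (via a Beurling--Selberg type majorant) gives the clean bound
\[
\mu(A_n)\le 3\omega(n)+3\max_{s\in\mathbb{Z}\setminus\{0\}}|\hat\mu(sq_n)|\le 3\omega(n)+\frac{3\nu}{C^A n^{A/B}}.
\]
This is where the factor $3$ in the definition \eqref{t_delta} of $t_\delta$ comes from, and it is not an accident to be absorbed but the exact constant produced by that lemma. Your two-range truncation of $\sum_{k\ne0}c_k\hat\mu(kq_n)$, as written, yields a high-frequency contribution of order $\nu\log^{1-A}q_n\ll \nu C^{1-A}n^{(1-A)/B}$ rather than $\nu C^{-A}n^{-A/B}$; this is still summable (since $A>2B\ge B+1$), so the Borel--Cantelli step survives, but it does not recover the precise $t_\delta$ stated. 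In short: your concern that ``the standard arguments from Lemma~\ref{E.Lemma5} do not apply'' is misplaced, since the paper never appeals to Lemma~\ref{E.Lemma5} here; it uses a different off-the-shelf estimate that already handles arbitrary $\omega$ and delivers the constant $3$ directly.
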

\begin{proof}
    By the definition of the counting function \eqref{R(x,N)}, for any $x\in F$, we have that
    \begin{align*}
        R(x,N;\gamma,\psi,\mathcal{A})
        \leq R(x,N;\gamma,\psi^*,\mathcal{A})
        \leq
        R(x,N;\gamma,\psi,\mathcal{A})
        + R(x,N;\gamma,\omega,\mathcal{A}).
    \end{align*}
    It follows that, for any $x\in F$,
    \begin{align*}
        |R(x,N;\gamma,\psi,\mathcal{A})
        - R(x,N;\gamma,\psi^*,\mathcal{A})|
        \leq R(x,N;\gamma,\omega,\mathcal{A}).
    \end{align*}
    It suffices to find an upper for $R(x,N;\gamma,\omega,\mathcal{A})$, for any $x\in F\setminus F_\delta$, for some measurable $F_\delta\subset F$ such that $\mu(F_\delta)<\delta$.
    By Theorem 2 from \cite{Pollington2022},
    we see that for almost every $x\in F$, the extra term is exactly given by the counting function
    \begin{align*}
        R(x,N;\gamma,\omega,\mathcal{A})
    \end{align*}
    For any $q\in\mathbb{N}$, define
    \begin{align*}
        E_q=\{x\in F:\|qx-\gamma\|\leq\omega(q)\}
    .\end{align*}
    By Lemma 2 from \cite{Pollington2022}, we know that for any $q\in\mathbb{N}$, if $q\geq4$ then \begin{align*}
        \mu(E_q)\leq 3\omega(q)+\min{\left\{3\max_{s\in\mathbb{Z}}\left|\hat{\mu}(sq)\right|,2\sum_{s=1}^\infty\frac{\left|\hat{\mu}(sq)\right|}{s}\right\}}.
    \end{align*}
    By the assumptions of the lemma,
    \begin{align*}
        \max_{s\in\mathbb{Z}\setminus\{0\}}\left|\hat{\mu}(sq_n)\right|
        \leq\frac{\nu}{\log^A{q_n}}
        \leq\frac{\nu}{C^A n^{A/B}}.
    \end{align*}
    Hence, by taking the summation, for any $t\in\mathbb{N}$, we have that
    \begin{align*}
        \mu\left(\bigcup_{n=t}^\infty E_{q_n}\right)
        \leq\sum_{n=t}^\infty{\mu(E_{q_n})}
        \leq3\sum_{n=t}^\infty{w(n)}+\frac{3\nu}{C^A}\sum_{n=t}^\infty\frac{1}{n^{A/B}}
        \leq3\sum_{n=t}^\infty
        \left(
        \omega(n)+\frac{\nu}{C^A n^{A/B}}
        \right)
        .
    \end{align*}
    As the right-most term converges when $t=1$, we get that for any $\delta>0$, there exists $t_\delta\in\mathbb{N}$ such that \begin{align*}
        \mu\left(\bigcup_{n=t_\delta}^\infty E_{q_n}\right)\leq\delta,
    \end{align*}
    where $t_\delta$ is given by \eqref{t_delta}.
    By taking $F_\delta=\bigcup_{n=t_\delta}E_{q_n}$, we get that $\mu(F_\delta)<\delta$ and for any $x\in F\setminus F_\delta$, \begin{align*}
        R(x,N;\gamma,\omega,\mathcal{A})
        <t_\delta.
    \end{align*}
\end{proof}

This lemma tells us that it is possible to make two such extra assumptions on $\psi$ and the counting result differs by an additive constant which depends only on $\omega$. In the proof for Theorem \ref{E.Thm1}, we have taken $\tau=A/B$ in \eqref{extra con L5}. That is, for Theorem \ref{E.Thm1}, $t_\delta$ is given by \begin{align*}
    t_\delta=\min{\left\{t\in\mathbb{N}:\sum_{n=t}^\infty
        \left(3n^{-A/B}+\frac{\nu}{C^A n^{A/B}}
        \right)<\frac{\delta}{3}\right\}}
. \end{align*}
To get a concrete estimate, an upper bound for this $t_\delta$ can be obtained by noticing that if
\begin{align*}
    \left(3+\frac{\nu}{C^A}\right)\sum_{n=t}^\infty\frac{1}{n^{A/B}}
    \leq\left(3+\frac{\nu}{C^A}\right)\int_{t-1/2}^\infty\frac{dx}{x^{A/B}}
    =\frac{3+\nu/C^A}{1-A/B}\left(t-\frac{1}{2}\right)^{1-A/B}<\delta
, \end{align*}
then
\begin{align*}
    t_{\delta}\leq\frac{1}{2}+\left(\frac{(1-A/B)\delta}{3+\nu/C^A}\right)^{1/(1-A/B)}.
\end{align*}
Also, for Theorem \ref{E.Thm4}, $t_\delta$ is given by \begin{align*}
    t_\delta=\min{\left\{t\in\mathbb{N}:\sum_{n=t}^\infty
        \left(n^{-9}+\frac{\nu}{C^A n^{A/B}}
        \right)<\frac{\delta}{3}\right\}}
. \end{align*}
To get a concrete estimate, an upper bound for this $t_\delta$ can be obtained by noticing that if $d=\min{(9,A/B)}$,
\begin{align*}
    \sum_{n=t}^\infty\left(n^{-9}+\frac{\nu}{C^A n^{A/B}}\right)
    \leq \left(1+\frac{\nu}{C^A}\right)\sum_{n=t}^\infty\frac{1}{n^d}
    \leq \left(1+\frac{\nu}{C^A}\right)\int_{t-1/2}^\infty\frac{dx}{x^d}
    =\frac{1+\nu/C^A}{1-d}\left(t-\frac{1}{2}\right)^{1-d}<\delta
, \end{align*}
then
\begin{align*}
    t_{\delta}\leq\frac{1}{2}+\left(\frac{(1-\min{(9,A/B)})\delta}{1+\nu/C^A}\right)^{1/(1-\min{(9,A/B)}))}.
\end{align*}

Our new respective set $E_{\varepsilon,\delta}$ for Theorems \ref{E.Thm1} and \ref{E.Thm4} is given by $E_{\varepsilon,\delta}=E'_{\varepsilon, \delta/2}\cup F_{\delta/2}$, where $E'_{\varepsilon, \delta/2}$ is given by Theorem \ref{thm3} with the parameters given above, and $F_{\delta/2}$ is given in Lemma \ref{lastly count update additive constant thingy}. This completes the proofs of Theorems \ref{E.Thm1} and \ref{E.Thm4}.

\subsection{Normal Numbers}
We begin by recalling some definitions. Set $b \in \mathbb{N},\, b \geq 2$. Recall that for any real number $\alpha$ there is a unique expansion base $b$ such that
\begin{equation}\label{base b expansion defn}
 \alpha=\left[\alpha\right]+\sum_{n=1}^{\infty}a_{n}b^{-n},   
\end{equation}
where $\left[\alpha\right]$ denotes the integer part of $\alpha$, $0\leq a_{n} <b$ and $a_{n} < b-1$ infinitely often. Given a fixed $\alpha$, denote by $A(d,\,b,\,N)$ the number of times $d$ is in the set $\left\{a_{1},\,\dots,\,a_{N}\right\}$. We say that $\alpha$ is simply normal to base $b$ if 
\[\lim_{N \to \infty}\frac{A(d,\,b,\,N)}{N}=\frac{1}{b}\]
for all $d,\, 0 \leq d <b$.

We call $\alpha$ entirely normal to base $b$ if it is simply normal base $b^{n}$ for all $n=1,\,2,\dots$. We say $\alpha$ is absolutely normal if it is entirely normal to all bases $b >1$.

These definitions are different to those first given by Borel; Theorem 1.2 from \cite{harman1998metric} shows the definitions are equivalent.

An easy application of Lemma 1.4 from \cite{harman1998metric} quickly shows that almost all real numbers are simply normal to a base $b$. Applying Theorem \ref{thm2} in its place allows us to give an upper bound on the number of times a given digit $d$ appears in the base $b$ expansion for almost all real $\alpha$ other than in a set of measure at most $\delta$.

\begin{thm}\label{normal numbers thm}
For any $\delta>0$, there exists a set $E_{\delta}$ of measure at most $\delta$ such that for any real number $\alpha\in[0,1)\setminus E_{\delta}$, the number of times a given digit $d$ appears in its base $b$ expansion in \eqref{base b expansion defn} up to the $N$-th digit (that is, $A(d,\,b,\,N)$) satisfies
\[A(d,\,b,\,N) \leq \min \left\{N,\, \frac{N}{b}+K_{\varepsilon,\,\delta} \left(N^{2/3} \log^{1/3+\varepsilon}\left(N+2\right)\right)\right\},\]
where $K_{\varepsilon,\,\delta}$ is given in Theorem \ref{thm2}.
\end{thm}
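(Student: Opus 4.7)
The plan is to apply Theorem \ref{thm2} directly, with the $f_k$ chosen as indicator functions of the event that the $k$-th base-$b$ digit equals $d$. Specifically, fix $d \in \{0,1,\ldots,b-1\}$ and, for each $k \in \mathbb{N}$, define $f_k : [0,1) \to \{0,1\}$ by $f_k(\alpha) = 1$ if $a_k = d$ in the expansion \eqref{base b expansion defn}, and $0$ otherwise. Then $A(d,b,N) = \sum_{k=1}^N f_k(\alpha)$, and I take the deterministic sequences $f_k = \varphi_k = 1/b$ so that $\Phi(N) = N/b$. The hypotheses $0 \leq f_k \leq \varphi_k \leq 1$ and $f_k(\alpha) < C := 1$ are immediate, and $\Phi(N) \to \infty$.

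The key step is verifying the variance bound \eqref{1.2.7}. Under Lebesgue measure on $[0,1)$, the base-$b$ digits are i.i.d.\ uniform on $\{0,\ldots,b-1\}$: indeed, the cylinder set $\{a_k = d\}$ has measure $1/b$, and for $j \neq k$ the events $\{a_j = d\}, \{a_k = d\}$ are independent. Hence the random variables $f_k - 1/b$ are pairwise orthogonal in $L^2([0,1))$, so
\[
\int_0^1 \left(\sum_{k=1}^N \bigl(f_k(\alpha) - \tfrac{1}{b}\bigr)\right)^{2} \mathrm{d}\alpha \;=\; \sum_{k=1}^N \tfrac{1}{b}\bigl(1-\tfrac{1}{b}\bigr) \;\leq\; \frac{N}{b} \;=\; \Phi(N),
\]
so \eqref{1.2.7} is satisfied with $K = 1$. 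This is the only nontrivial input, and it is well-known; the remainder of the proof is a mechanical application of Theorem \ref{thm2}.

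Applying Theorem \ref{thm2} with the parameters $C = 1$, $K = 1$, $\Phi_0 = \Phi(1) = 1/b$, and the resulting $N_{\varepsilon,\delta}$ and $K_{\varepsilon,\delta}$ substituted into the formulas given there, one obtains a measurable $E_{\varepsilon,\delta} \subset [0,1)$ with $\mu(E_{\varepsilon,\delta}) < \delta$ such that for every $\alpha \in [0,1) \setminus E_{\varepsilon,\delta}$ and every $N \in \mathbb{N}$,
\[
\left| A(d,b,N) - \frac{N}{b} \right| \;\leq\; K_{\varepsilon,\delta}\Bigl(\Phi(N)^{2/3} \log^{1/3+\varepsilon}(\Phi(N)+2)\Bigr) \;\leq\; K_{\varepsilon,\delta}\, N^{2/3} \log^{1/3+\varepsilon}(N+2),
\]
where the last inequality uses $\Phi(N) = N/b \leq N$ together with monotonicity of $x \mapsto x^{2/3}\log^{1/3+\varepsilon}(x+2)$. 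Combining the one-sided version of this bound with the trivial estimate $A(d,b,N) \leq N$ yields the $\min$ on the right-hand side of the claim, and relabelling $E_{\varepsilon,\delta}$ as $E_\delta$ (with $\varepsilon$ absorbed into $K_{\varepsilon,\delta}$) completes the proof. Because the orthogonality is \emph{exact}, there is no real obstacle in the argument; the only point requiring a little care is selecting the correct $\Phi_0$ and $C$ to read off the explicit constant $K_{\varepsilon,\delta}$ from Theorem \ref{thm2}.
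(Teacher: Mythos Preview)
Your proof is correct and follows essentially the same approach as the paper: define $f_k$ as the indicator that the $k$-th base-$b$ digit is $d$, set $f_k=\varphi_k=1/b$, verify the variance bound \eqref{1.2.7} with $K=1$ via the pairwise independence of digits under Lebesgue measure, and apply Theorem~\ref{thm2}. The only cosmetic difference is that the paper writes the orthogonality as $\int_0^1 f_j(x)f_k(x)\,\mathrm{d}x=b^{-2}$ for $j\neq k$ rather than invoking i.i.d.\ language; one small technicality is that $f_k(\alpha)$ can actually \emph{equal} $1$, so strictly you should take $C>1$ (e.g.\ $C=2$) rather than $C=1$ to satisfy the hypothesis $f_k(x)<C$ in Theorem~\ref{thm2}, but this is harmless.
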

We note that the size of the constant $K_{\varepsilon,\,\delta}$ impacts the size we need $N$ to be for 
\[\frac{N}{b}+K_{\varepsilon,\,\delta} \left(N^{2/3} \log^{1/3+\varepsilon}\left(N+2\right)\right) < N\]
to hold; $N$ is clearly a trivial upper bound for $A(d,\,b,\,N)$.

The proof follows the one given in \cite{harman1998metric} but replaces the use of the ineffective lemma with the effective version given at Theorem \ref{thm2}.

\begin{proof}
As the integer part of $\alpha$ has no bearing on whether $\alpha$ is simply normal base $b$, we can without loss of generality restrict $\alpha \in \left[0,\,1\right)$. Further, let $a_{k}$ denote the $k$-th digit in the base $b$ expansion of $\alpha$ as given at \eqref{base b expansion defn}. Set $d \in \mathbb{Z},\, 0\leq d <b$.

Let
\begin{equation*}
    f_{k}(\alpha)= \begin{cases}
    1 \textrm{ if the $k$-th digit of $\alpha$ is $d$,}\\
    0 \textrm{ otherwise.}
    \end{cases}
\end{equation*}
Further, let
\[f_{k}=b^{-1}.\]

We note that for $j \neq k$,
\[\int_{0}^{1}f_{k}(x)f_{j}(x) \textrm{d}x= \mu \left(\left\{x \in \left[0,\,1\right): \textrm{the $k$-th and $j$-th digits of $x$ are both $d$}\right\}\right)=b^{-2}.\]

It thus follows that
\[\int_{0}^{1}\left(\sum_{k=1}^{N}\left(f_{k}(x)-f_{k}\right)\right)^{2} \textrm{d}x = \sum_{k=1}^{N}b^{-1}\left(1-b^{-1}\right).\]
More justification for these equalities can be found on page 12 from \cite{harman1998metric}.

It follows that we can apply Theorem \ref{thm2} with $\varphi_{k}=b^{-1}$ and $K=1$. We note that $\Phi_{N}\leq \frac{N}{b},\, \sum_{k=1}^{N}f_{k}\leq \frac{N}{b}$ and the result then follows.
\end{proof}

\subsection{Strong Law of Large Numbers} 
In this section, we will give an effective version of strong law of large numbers.

Let $(X,\, \Sigma,\,\mu)$ to be a probability space. For any $k\in\mathbb{N}$, let $(F_k(x))$ be sequence of $\mu$-integrable identically distributed random variables with mean $F$ and variance $\sigma^2>0$ on the probability measure space $(X,\Omega,\mu)$. The strong law of large numbers says that if all the $F_k$ are independent, then for $\mu$-almost every $x\in X$, \begin{align*}
    \lim_{N\to\infty}\frac{1}{N}\sum_{k=1}^NF_k(x)=F.
\end{align*}

In fact, the assumption that all $F_k$ are independent is stronger than needed for the conclusion to hold. In fact, if we have that   
\begin{align}\label{assumption_sloln}
    \int_X{\left(\sum_{k=m+1}^n(F_k(x)-F)\right)}^2\,\textrm{d}\mu\leq\sigma^2(n-m)\max{(1,F)},
    \end{align}
for any $m,n\in\mathbb{N}$ with $m<n$, then it follows from Lemma 1.5 from \cite{harman1998metric} that for almost every $x\in X$, as $N\to\infty$, 
\begin{align*}
    \frac{1}{N}\sum_{k=1}^NF_k(x)=F+O\left(N^{-1/2}\log^{2}{N}\right)\to F. 
\end{align*}
The following lemma shows that assumption \eqref{assumption_sloln} is indeed weaker than independence. In fact, we further show that the assumption that all the $F_{k}(x)$ are identical is unnecessary too.
\begin{lemma}
Suppose all $F_{k}(x)$ are independent, with finite means $F_{k}$ and variances $\sigma_{k}^{2}$. We assume there is a finite universal bound on the means $F_{k}$, and write $\Tilde{F}_{k}=\max\{F_{k},\,1\}$. Similarly, assume there is a finite universal bound on the $\sigma_{k}^{2}$ and write $\sigma^{2}=\max_{k}\{\sigma_{k}^{2},\,1\}$. Then for any $m,n\in\mathbb{N}$ with $m<n$,

\begin{align}\label{not indentical assump}
    \int_X{\left(\sum_{k=m+1}^n(F_k(x)-F_{k})\right)}^2\,\textrm{d}\mu\leq K\sum_{k=m+1}^{n}\Tilde{F}_{k},
    \end{align}
for a constant $K>0$
\end{lemma}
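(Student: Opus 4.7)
The plan is to expand the square inside the integral, use independence to kill all cross terms, and then apply the uniform bound on the variances.

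First I would write
\begin{align*}
\int_X\left(\sum_{k=m+1}^n(F_k(x)-F_k)\right)^2\,d\mu
&=\sum_{k=m+1}^n\int_X(F_k(x)-F_k)^2\,d\mu\\
&\quad+2\sum_{m<j<k\leq n}\int_X(F_j(x)-F_j)(F_k(x)-F_k)\,d\mu.
\end{align*}
Because the $F_k(x)$ are independent and each centred difference $F_k(x)-F_k$ has mean zero, every off-diagonal integral vanishes. The diagonal integrals are exactly the variances $\sigma_k^2$, so the double integral collapses to $\sum_{k=m+1}^n\sigma_k^2$.

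Second, I would invoke the two uniform bounds. By hypothesis $\sigma_k^2\leq\sigma^2$ for all $k$, and by construction $\tilde F_k\geq 1$. Combining these gives $\sigma_k^2\leq\sigma^2\tilde F_k$ termwise, so
\[
\sum_{k=m+1}^n\sigma_k^2\;\leq\;\sigma^2\sum_{k=m+1}^n\tilde F_k,
\]
which is exactly the bound claimed in \eqref{not indentical assump} with $K=\sigma^2$.

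There is no real obstacle here: the lemma is a routine second-moment computation dressed up to match the shape of the hypothesis \eqref{assumption_sloln} used in the strong law application. The only minor design choice is what $K$ to announce; the natural one is $K=\sigma^2=\max_k\{\sigma_k^2,1\}$, and the role of $\tilde F_k$ (rather than just $n-m$) on the right-hand side is purely cosmetic, chosen so that the conclusion fits directly into the framework of the earlier effective Borel–Cantelli-type lemmas where the bound is expressed as a sum of non-negative weights $\varphi_k$.
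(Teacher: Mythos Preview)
Your proposal is correct and matches the paper's own proof essentially line for line: expand the square, use independence to eliminate the cross terms, identify the diagonal terms as $\sigma_k^2$, and then bound $\sum_{k=m+1}^n\sigma_k^2\leq\sigma^2(n-m)\leq\sigma^2\sum_{k=m+1}^n\tilde F_k$ to obtain $K=\sigma^2$. The only cosmetic difference is that the paper passes through the intermediate bound $(n-m)\sigma^2$ explicitly, whereas you bound termwise by $\sigma_k^2\leq\sigma^2\tilde F_k$; both are the same computation.
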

\begin{proof}
    For any $m,n\in\mathbb{N}$, if $m<n$ then, as the $F_{k}(x)$ are independent, we have that
    \begin{align*}
    \int_X{\left(\sum_{k=m+1}^n(F_k(x)-F_{k})\right)}^2\,\textrm{d}\mu&=\sum_{k=m+1}^n\int_X{(F_k(x)-F_{k})}^2\,\textrm{d}\mu\\
    &=\sum_{k=m+1}^{n}\sigma_{k}^{2}\\
    &\leq(n-m)\sigma^2\\
    &\leq\sigma^2\sum_{k=m+1}^{n}\Tilde{F}_{k}
. \end{align*}
Thus, the Lemma holds with $K=\sigma^{2}$. In the case that the $F_{k}(x)$ are identically distributed, from the final inequality we obtain \eqref{assumption_sloln}.
\end{proof}

Our effective version of strong law of large numbers is as follows:
\begin{thm} \hfill \\
    Let $(X,\, \Sigma,\,\mu)$ to be a probability space. For any $k\in\mathbb{N}$, let $(F_k(x))$ be sequence of $\mu$-integrable random variables with finite means $F_{k}$ and finite variances $\sigma_{k}^2>0$ on the probability measure space $(X,\Omega,\mu)$. We assume there is a finite universal bound for the variances, which we denote by $\sigma^{2}$, and assume there is a finite universal bound for the means $F_{k}$. Let $\Tilde{F}_{k}=\max\{F_{k},\,1\}$, so that $\sum_{k=1}^{\infty}\Tilde{F}_{k}$ diverges.
    
    Suppose that \eqref{not indentical assump} holds for any $m,n\in\mathbb{N}$ with $m<n$. Then for any $\varepsilon>0$ and $\delta>0$, there exists some $\mu$-measurable $E_{\varepsilon,\delta}\subset X$ such that $\mu(E_{\varepsilon,\delta})<\delta$ and for any $x\in X\setminus E_{\varepsilon,\delta}$ and $N\in\mathbb{N}$, 
    \begin{align}\label{LLN ineq}
        \left|\frac{1}{N}\sum_{k=1}^N\left(F_k(x)-F_{k}\right)\right|\leq K_{\varepsilon,\delta}\left(\frac{\varPhi^{1/2}(N)\log^{3/2+\varepsilon}{(\varPhi(N)})}{N}+\frac{\varPhi_{0}}{N}\right), 
    \end{align}
    where $\varPhi(N)=\sum_{k=1}^{N}\Tilde{F}_{k}$ and \begin{align*} 
        K_{\varepsilon,\delta}&=\max \left\{\alpha,\,\beta \right\},\\
        N_{\varepsilon,\delta}&=\left\lceil\frac{r_{\varepsilon,\delta}}{\varPhi_0}-1\right\rceil, \\
        r_{\varepsilon,\delta}&=\left\lceil\left(\frac{2\sigma^2}{\varepsilon\delta}\right)^{1/\varepsilon}\right\rceil+1,
    \end{align*}
    with
    \[\alpha =\frac{N_{\varepsilon,\delta}}{\max{\left(\varPhi_0^{1/2}\log^{3/2+\varepsilon}(\varPhi_0+2)+F,1\right)}},\]
    and
    \[\beta =\frac{2}{\log^{3/2+\varepsilon/2}{2}}\left(1+\frac{1}{\sqrt{2}\log^{3/2+\varepsilon}{4}}\right)
        \left(\frac{\log{4}}{\log{3}}\right)^{3/2+\varepsilon},\]
        where
        \[\varPhi_{0}=\max_{k}\{\Tilde{F}_{k}\}.\]
\end{thm}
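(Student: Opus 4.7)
The plan is to apply Theorem \ref{thm3} directly, choosing $f_k(x) = F_k(x)$, $f_k = F_k$, and $\varphi_k = \tilde F_k = \max\{F_k,1\}$, so that $0 \leq f_k \leq \varphi_k$ and the partial sums $\varPhi(N) = \sum_{k=1}^N \tilde F_k$ satisfy $\varPhi(N) \geq N$, which diverges as required. The second-moment hypothesis \eqref{1.2.7.a} of Theorem \ref{thm3} is precisely the assumption \eqref{not indentical assump} of the present theorem, with $K = \sigma^2$ in view of the preceding lemma (in the identically distributed case), or, more generally, with $K$ being the universal bound on the variances.

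Having verified the hypotheses, Theorem \ref{thm3} produces a measurable set $E_{\varepsilon,\delta} \subset X$ of measure less than $\delta$ such that for any $x \in X \setminus E_{\varepsilon,\delta}$ and any $N \in \mathbb{N}$,
\[
\left|\sum_{k=1}^N F_k(x) - \sum_{k=1}^N F_k\right| \leq K_{\varepsilon,\delta}\left(\varPhi^{1/2}(N)\log^{3/2+\varepsilon}\varPhi(N) + \max_{1\leq k\leq N} F_k\right).
\]
Dividing by $N$ and using the elementary bound $\max_{1\leq k \leq N} F_k \leq \max_k \tilde F_k = \varPhi_0$ on the residual term yields inequality \eqref{LLN ineq}. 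The explicit constants $\alpha$, $\beta$, $N_{\varepsilon,\delta}$ and $r_{\varepsilon,\delta}$ in the present statement are then inherited from their counterparts in Theorem \ref{thm3}, after substituting $K = \sigma^2$ into the formula for $r_{\varepsilon,\delta}$ and reading off the two arguments of the maximum defining $K_{\varepsilon,\delta}$, with the first simplified by using the universal bound on the means in place of $f_1 = F_1$.

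The principal technical obstacle is that Theorem \ref{thm3} requires each $f_k(x)$ to be non-negative and to satisfy a pointwise bound $f_k(x) < C$, neither of which is automatic for a general sequence of random variables. One may reduce to this case in several ways: by restricting attention to applications in which the $F_k(x)$ are themselves non-negative and uniformly bounded; by decomposing $F_k(x) = F_k^{+}(x) - F_k^{-}(x)$ into positive and negative parts and applying Theorem \ref{thm3} separately to each component, noting that the variance hypothesis \eqref{not indentical assump} is preserved up to an absolute multiplicative constant; or, if a universal pointwise bound is available, by shifting $f_k(x) := F_k(x) + M$ and $f_k := F_k + M$ for a sufficiently large constant $M$, which leaves the integrand $\sum_{k=m+1}^{n}(F_k(x) - F_k)$ in \eqref{not indentical assump} unchanged and only alters the absolute constants in the conclusion. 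Apart from this reduction, the theorem is an immediate corollary of Theorem \ref{thm3}.
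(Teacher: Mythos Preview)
Your proposal is correct and follows essentially the same approach as the paper: both apply Theorem \ref{thm3} directly with $f_k(x)=F_k(x)$, $f_k=F_k$, $\varphi_k=\tilde F_k$, $K=\sigma^2$, and then divide through by $N$. The only notable difference is in handling the technical obstacle you identify: rather than your suggested reductions (positive/negative parts, shifting by a constant), the paper simply sets $C=1$ and remarks that, although this need not be a genuine pointwise bound for $F_k(x)$, an inspection of the proof of Theorem \ref{thm3} shows that the constant $C$ enters only through the trivial bound on the finite initial segment $N\le N_{\varepsilon,\delta}$, so the argument goes through regardless.
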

We note that we do not need an assumption about the random variables being identically and independently distributed.
\begin{proof}
    The proof is essentially a direct application of Theorem \ref{thm3}. Take $K=\sigma^2$, $C=1$ and for any $k\in\mathbb{N}$, $f_k(x)=F_k(x)$, $f_k=F_{k}$, $\varphi_{k}=\Tilde{F}_{k}$ and $\varPhi_{0}$ as defined above. We apply Theorem \ref{thm3}, and the results follow by dividing both sides in the inequality by $N$. Although $C=1$ may not be a universal bound of $F_k(x)$, it follows from the proof of Theorem \ref{thm3} that it suffices to assume that $C=1$.
\end{proof}

If all the $F_{k}(x)$'s are identically distributed, then $\Tilde{F}_{k}=F$ is the same for all $k$. Thus, $\varPhi(N)=NF$, so substituting this into \eqref{LLN ineq}, we obtain that
\begin{align*}
        \left|\frac{1}{N}\sum_{k=1}^N\left(F_k(x)-F_{k}\right)\right|\leq K_{\varepsilon,\delta}\left(\frac{F^{1/2}\log^{3/2+\varepsilon}{(NF})}{N^{1/2}}+\frac{F}{N}\right). 
    \end{align*}

\section{Proof of the Probabilistic Results}
In this section we prove Theorems \ref{thm2} and \ref{thm3}.
\subsection{Proof of Theorem \ref{thm2}}
 Pick any $\varepsilon>0$. For any $N\in\mathbb{N}$ and $x\in X$, define 
 \begin{align*}
\Psi(N,x)&=\sum_{k=1}^Nf_k(x),\\
\Psi(N)&=\sum_{k=1}^Nf_k,\\
E(N,x)&=\Psi(N,x)-\Psi(N). 
\end{align*}
For any $j\in\mathbb{N}$, define 
\begin{align}
N_j&=\min\left\{n\in\mathbb{N}:\varPhi(n)>j^3\log^{1+\varepsilon}{(j+2)}\right\}, \label{Nj} \\
A_j&=\left\{x\in X:\left|E(N_j,x)\right|>j^2\log^{1+\varepsilon}{(j+2)}\right\}. \label{Aj}
\end{align}
we note that as $\Psi(N)\to\infty$, $N_j$ is well-defined for all $j\in\mathbb{N}$. Considering \eqref{1.2.7}, we see that
\begin{align}
    \mu\left(A_{j}\right)\min_{x \in A_{j}} \left(|E\left(N_{j},\,x\right)|^{2}\right) &\leq \int_{A_{j}}\left(\sum_{k=1}^{N_{j}}\left(f_{k}(x)-f_{k}\right)^{2}\right)\textrm{d}\mu(x) \nonumber \\
    &\leq \int_{X}\left(\sum_{k=1}^{N_{j}}\left(f_{k}(x)-f_{k}\right)^{2}\right)\textrm{d}\mu(x) \nonumber \\
    &\leq K\Phi(N_{j}).
\end{align}
It follows immediately from the above and \eqref{Aj} that
\begin{align}\label{abstract measure Aj}
    \mu\left(A_{j}\right) &\leq \frac{K\Phi(N_{j})}{\min_{x \in A_{j}}\left(|E\left(N_{j},\,x\right)|^{2}\right)} \nonumber \\
    &\leq \frac{K\Phi(N_{j})}{\left(j^{2}\log^{1+\varepsilon}(j+2)\right)^{2}} \nonumber \\
    &=\frac{K\Phi(N_{j})}{\left(j^{4}\log^{2+2\varepsilon}(j+2)\right)}.
\end{align}

For any $j\in\mathbb{N}$, $\log^{1+\varepsilon}{3}\leq j^3\log^{1+\varepsilon}{(j+2)}$. Further, by \eqref{first conditions on fk, phik} and \eqref{Nj} we see that
\[j^3\log^{1+\varepsilon}{(j+2)} < \varPhi(N_{j}) \leq j^3\log^{1+\varepsilon}{(j+2)}+1.\]
By applying the above and \eqref{Aj} to \eqref{abstract measure Aj}, we obtain that
\begin{align*}
\mu(A_j)&\leq\frac{Kj^3\log^{1+\varepsilon}{(j+2)}+K}{j^4\log^{2+2\varepsilon}{(j+2)}} \nonumber\\
&\leq\frac{(1+\log^{-1-\varepsilon}{3})Kj^3\log^{1+\varepsilon}{(j+2)}}{j^4\log^{2+2\varepsilon}{(j+2)}} \nonumber\\
&=\frac{1+\log^{-1-\varepsilon}{3}}{j\log^{1+\varepsilon}{(j+2)}}K.
\end{align*}
We now pick any $\delta>0$. Let 
\begin{align*}
j_{\varepsilon,\delta}&=1+\left\lceil\exp{\left(\frac{1+\log^{-1-\varepsilon}{3}}{\varepsilon\delta}K\right)^{1/\varepsilon}}\right\rceil.
\end{align*}
It follows that 
\begin{align}\label{sum of tail 1}
\sum_{j=j_{\varepsilon,\delta}}^\infty\mu(A_j)&<\int_{j_{\varepsilon,\delta}-1}^\infty\frac{(1+\log^{-1-\varepsilon}{3})K}{x\log^{1+\varepsilon}x}dx \nonumber \\
&=\frac{1+\log^{-1-\varepsilon}{3}}{\varepsilon\log^{\varepsilon}(j_{\varepsilon,\delta}-1)}K
\leq\delta.
\end{align}
Let 
\begin{align*}\label{E defn}
E_{\varepsilon,\delta}=\bigcup_{j=j_{\varepsilon,\delta}}^\infty A_j.
\end{align*}
Notice that, by sub-additivity of measures and \eqref{sum of tail 1}, \begin{align*}\mu(E_{\varepsilon,\delta})=\mu\left(\bigcup_{j=j_{\varepsilon,\delta}}^\infty A_j\right)\leq\sum_{j=j_{\varepsilon,\delta}}^\infty\mu(A_j)<\delta.
\end{align*}
After this initial set up, we can now give some lemmas we will use to complete the proof.

\begin{lemma} \label{Psi difference}
For any $j\in\mathbb{N}$, $j\geq2$, we have that 
\begin{align*}
\Psi(N_j)-\Psi(N_{j-1})\leq
\left(3+\frac{1+\varepsilon}{\log{3}}+\frac{1}{4\log^{1+\varepsilon}{4}}\right)j^2\log^{1+\varepsilon}{(j+2)},
\end{align*}
where $N_{j}$ is given at \eqref{Nj}.
\end{lemma}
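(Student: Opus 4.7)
The plan is a direct telescoping-and-calculus argument. Since the definition of $N_j$ at \eqref{Nj} is phrased in terms of $\varPhi$, the first move is to transfer the question from $\Psi$ to $\varPhi$. Using $0\leq f_k\leq\varphi_k$ from \eqref{first conditions on fk, phik} and the fact that $N_{j-1}<N_j$ (which holds for $j\geq 2$ because the threshold $j^3\log^{1+\varepsilon}(j+2)$ is strictly increasing in $j$), I would write
\[
\Psi(N_j)-\Psi(N_{j-1})=\sum_{k=N_{j-1}+1}^{N_j}f_k\leq\sum_{k=N_{j-1}+1}^{N_j}\varphi_k=\varPhi(N_j)-\varPhi(N_{j-1}).
\]

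Next I would exploit the definition of $N_j$. By minimality and the bound $\varphi_k\leq 1$, we have $\varPhi(N_j-1)\leq j^3\log^{1+\varepsilon}(j+2)$, so $\varPhi(N_j)\leq j^3\log^{1+\varepsilon}(j+2)+1$; likewise $\varPhi(N_{j-1})>(j-1)^3\log^{1+\varepsilon}(j+1)$. Combining,
\[
\varPhi(N_j)-\varPhi(N_{j-1})<j^3\log^{1+\varepsilon}(j+2)-(j-1)^3\log^{1+\varepsilon}(j+1)+1.
\]

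The main step is then to estimate this difference and match the three constants in the stated bound. I would split it as
\[
j^3\!\left[\log^{1+\varepsilon}(j+2)-\log^{1+\varepsilon}(j+1)\right]+\left[j^3-(j-1)^3\right]\log^{1+\varepsilon}(j+1).
\]
For the first bracket the mean value theorem applied to $t\mapsto\log^{1+\varepsilon}t$ on $[j+1,j+2]$ gives an upper bound $(1+\varepsilon)\log^{\varepsilon}(j+2)/(j+1)$, so after multiplying by $j^3$ and using $\log(j+2)\geq\log 3$ to write $\log^{\varepsilon}(j+2)=\log^{1+\varepsilon}(j+2)/\log(j+2)$, this contribution is at most $\frac{1+\varepsilon}{\log 3}\,j^2\log^{1+\varepsilon}(j+2)$. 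For the second bracket I would use $j^3-(j-1)^3=3j^2-3j+1\leq 3j^2$ together with monotonicity $\log^{1+\varepsilon}(j+1)\leq\log^{1+\varepsilon}(j+2)$ to obtain $3j^2\log^{1+\varepsilon}(j+2)$. Finally the stray $+1$ is absorbed into the last constant by observing that for $j\geq 2$ we have $j^2\log^{1+\varepsilon}(j+2)\geq 4\log^{1+\varepsilon}4$, so $1\leq\frac{1}{4\log^{1+\varepsilon}4}\,j^2\log^{1+\varepsilon}(j+2)$.

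Adding the three contributions yields exactly the stated constant. I do not expect any genuine obstacle; the only care point is matching the constants sharply, where the logarithm MVT on $[j+1,j+2]$ (rather than $[j+1,\infty)$) and the $j\geq 2$ hypothesis to absorb the additive $1$ are both essential for hitting the bound on the nose.
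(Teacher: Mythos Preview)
Your proposal is correct and follows essentially the same route as the paper: reduce $\Psi(N_j)-\Psi(N_{j-1})$ to $\varPhi(N_j)-\varPhi(N_{j-1})$ via $f_k\leq\varphi_k$, bound this by $g(j)-g(j-1)+1$ with $g(j)=j^3\log^{1+\varepsilon}(j+2)$ using the definition of $N_j$ and $\varphi_k\leq 1$, and absorb the stray $+1$ exactly as you do. The only cosmetic difference is that the paper applies the mean value theorem directly to $g$ and bounds $g'(\xi)$ via the product rule, whereas you split $g(j)-g(j-1)$ algebraically and apply the mean value theorem only to $t\mapsto\log^{1+\varepsilon}t$; both arrive at the same constant $3+\tfrac{1+\varepsilon}{\log 3}$.
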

\begin{proof}
Define $g:\mathbb{R}^+\to\mathbb{R}$ by  
\begin{align*}
g(j)=j^3\log^{1+\varepsilon}(j+2).
\end{align*}
Pick any $j\in\mathbb{N}$ such that $j\geq2$. By the Mean Value Theorem, there exists $\xi_j\in(j-1,j)$ such that 
\begin{align*}
g(j)-g(j-1)&=g'(\xi_j) \nonumber\\
&=3\xi_j^2\log^{1+\varepsilon}{(\xi_j+2)}+\frac{\xi_j^3}{\xi_j+2}(1+\varepsilon)\log^{\varepsilon}{(\xi_j+2)}\nonumber\\
&\leq 3\xi_j^2\log^{1+\varepsilon}{(\xi_j+2)}+\frac{\xi_j^2}{\log{3}}(1+\varepsilon)\log^{1+\varepsilon}{(\xi_j+2)}\nonumber\\
&=\left(3+\frac{1+\varepsilon}{\log{3}}\right)\xi_j^2\log^{1+\varepsilon}{(\xi_j+2)}\nonumber\\
&<\left(3+\frac{1+\varepsilon}{\log{3}}\right)j^2\log^{1+\varepsilon}{(j+2)}.
\end{align*}
We note that by \eqref{Nj}, we have that
\[\varPhi\left(N_{j}-1\right) \leq j^{3} \log^{1+\varepsilon}\left(j+2\right) < \varPhi\left(N_{j}\right).\]
From this, \eqref{first conditions on fk, phik}, and the fact that $j \geq 2$, we have that

\[\varPhi(N_{j})-\Psi(N_{j}) \geq \varPhi(N_{j-1})-\Psi(N_{j-1}).\]
More explicitly, this is because
\begin{align*}
    \varPhi(N_{j})-\Psi(N_{j})&=\left(\varPhi(N_{j-1})-\Psi(N_{j-1})\right)+\sum_{k=N_{j-1}+1}^{N_{j}}\left(\varphi_{k}-\psi_{k}\right),
\end{align*}
and by \eqref{first conditions on fk, phik} both terms on the right hand side are greater than equal to zero. Now, from the above and that $j\geq 2$ we see that
\begin{align*}
\Psi(N_j)-\Psi(N_{j-1})
&\leq\varPhi(N_j)-\varPhi(N_{j-1}) \nonumber \\
&=\varPhi(N_j-1)+\varphi_{N_j}-\varPhi(N_{j-1}) \nonumber\\
&\leq g(j)-g(j-1)+1 \nonumber \\
&<\left(3+\frac{1+\varepsilon}{\log{3}}\right)j^2\log^{1+\varepsilon}{(j+2)}+\frac{j^2\log^{1+\varepsilon}(j+2)}{4\log^{1+\varepsilon}{4}} \nonumber\\
&=\left(3+\frac{1+\varepsilon}{\log{3}}+\frac{1}{4\log^{1+\varepsilon}{4}}\right)j^2\log^{1+\varepsilon}{(j+2)},
\end{align*}
as stated in the lemma.
\end{proof}

\begin{lemma} \label{js in terms of Phi}
For any $\varepsilon>0$ and $j\in\mathbb{N}$, $j\geq2$, we have that
\begin{align*}
j^2\log^{1+\varepsilon}{(j+2)}\leq
\frac{4}{\log^{2\varepsilon/3}{(\varPhi_0+2)}}\left(\frac{\log{4}}{\log{3}}\right)^{1+\varepsilon}\varPhi^{2/3}(N_{j-1})\log^{1/3+\varepsilon}{(\varPhi{(N_{j-1})}+2)}.
\end{align*}
\end{lemma}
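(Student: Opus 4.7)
The plan is to unpack the defining inequality of $N_{j-1}$ and carefully redistribute the powers of $\log$ so that what is left on the left-hand side matches the right-hand side of the claim. Specifically, from \eqref{Nj} applied to $j-1$, I have $\varPhi(N_{j-1}) \geq (j-1)^3 \log^{1+\varepsilon}(j+1)$, which upon raising to the $2/3$ power gives
\[(j-1)^2 \log^{2(1+\varepsilon)/3}(j+1) \leq \varPhi^{2/3}(N_{j-1}).\]
This is the source of the exponent $2/3$ on $\varPhi$ appearing on the right.

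Next I will handle the shift from $j$ to $j-1$ and from $j+2$ to $j+1$ using two elementary estimates that are valid precisely because $j \geq 2$: first, $j \leq 2(j-1)$, so $j^2 \leq 4(j-1)^2$, which produces the factor $4$ in front; and second, for $j \geq 2$ one has $\log(j+2) \leq \tfrac{\log 4}{\log 3}\log(j+1)$ (the ratio $\log(j+2)/\log(j+1)$ is maximised at $j=2$), producing the factor $\bigl(\tfrac{\log 4}{\log 3}\bigr)^{1+\varepsilon}$. Combining these with the splitting
\[\log^{1+\varepsilon}(j+1) = \log^{2(1+\varepsilon)/3}(j+1)\cdot \log^{(1+\varepsilon)/3}(j+1),\]
I obtain
\[j^2\log^{1+\varepsilon}(j+2) \leq 4\left(\tfrac{\log 4}{\log 3}\right)^{1+\varepsilon} \varPhi^{2/3}(N_{j-1}) \log^{(1+\varepsilon)/3}(j+1).\]

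To finish, I must turn the leftover $\log^{(1+\varepsilon)/3}(j+1)$ into $\log^{1/3+\varepsilon}(\varPhi(N_{j-1})+2)$ divided by $\log^{2\varepsilon/3}(\varPhi_0+2)$. The key observation is that $\varPhi(N_{j-1})\geq (j-1)^3\log^{1+\varepsilon}(j+1)$ together with $j\geq 2$ forces $\varPhi(N_{j-1})+2 \geq j+1$, so $\log(j+1) \leq \log(\varPhi(N_{j-1})+2)$. Writing $(1+\varepsilon)/3 = 1/3 + \varepsilon/3$ and then upgrading the exponent from $1/3+\varepsilon/3$ to $1/3+\varepsilon$ uses $\log(\varPhi(N_{j-1})+2)\geq \log(\varPhi_0+2)$, which by definition of $\varPhi_0$ gives
\[\log^{1/3+\varepsilon/3}(\varPhi(N_{j-1})+2) \leq \frac{\log^{1/3+\varepsilon}(\varPhi(N_{j-1})+2)}{\log^{2\varepsilon/3}(\varPhi_0+2)}.\]
Combining all three blocks delivers the stated inequality exactly.

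The only mildly delicate step is the comparison $\varPhi(N_{j-1})+2\geq j+1$: at $j=2$ one needs $\log^{1+\varepsilon}(3)+2 \geq 3$, which is $\log^{1+\varepsilon}(3)\geq 1$, true because $\log 3 > 1$; for $j\geq 3$ the cubic $(j-1)^3$ alone dominates $j-1$. Apart from this base-case check, the argument is a mechanical redistribution of exponents, so there is no real obstacle beyond bookkeeping the constants $4$ and $\bigl(\tfrac{\log 4}{\log 3}\bigr)^{1+\varepsilon}$ and verifying that the exponent shift from $1/3+\varepsilon/3$ to $1/3+\varepsilon$ is paid for precisely by the $\log^{2\varepsilon/3}(\varPhi_0+2)$ in the denominator.
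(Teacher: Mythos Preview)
Your proof is correct and follows essentially the same approach as the paper: both start from the ratio bound $\tfrac{j^2\log^{1+\varepsilon}(j+2)}{(j-1)^2\log^{1+\varepsilon}(j+1)}\leq 4\bigl(\tfrac{\log 4}{\log 3}\bigr)^{1+\varepsilon}$, then use the defining inequality $\varPhi(N_{j-1})>(j-1)^3\log^{1+\varepsilon}(j+1)$ raised to the $2/3$ power, and finally trade the exponent $(1+\varepsilon)/3$ for $1/3+\varepsilon$ at the cost of dividing by $\log^{2\varepsilon/3}(\varPhi_0+2)$. Your write-up is in fact more explicit than the paper's, in particular in verifying the base case $\varPhi(N_{j-1})+2\geq j+1$ at $j=2$, which the paper leaves implicit.
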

\begin{proof} 
Since $j\geq2$,
\begin{align*}
\frac{j^2\log^{1+\varepsilon}{(j+2)}}{(j-1)^{2}\log^{1+\varepsilon}{(j+1)}}
&\leq 4\left(\frac{\log{4}}{\log{3}}\right)^{1+\varepsilon}.
\end{align*}
Thus, considering \eqref{Nj}, we see that
\begin{align*}
j^2\log^{1+\varepsilon}{(j+2)}
&\leq4\left(\frac{\log{4}}{\log{3}}\right)^{1+\varepsilon}(j-1)^2\log^{1+\varepsilon}{(j+1)} \nonumber\\
&\leq4\left(\frac{\log{4}}{\log{3}}\right)^{1+\varepsilon}\varPhi^{2/3}(N_{j-1})\log^{(1+\varepsilon)/3}{(\varPhi(N_{j-1})+2)}\nonumber\\
&\leq\frac{4}{\log^{2\varepsilon/3}{(\varPhi_0+2)}}\left(\frac{\log{4}}{\log{3}}\right)^{1+\varepsilon}\varPhi^{2/3}(N_{j-1})\log^{1/3+\varepsilon}{(\varPhi{(N_{j-1})}+2)},
\end{align*}
as stated in the lemma.
\end{proof}

We are now in a position to complete the proof of Theorem \ref{thm2}. Pick any $x\in X\setminus E_{\varepsilon,\delta}$ and $N\in\mathbb{N}$ such that $N>N_{j_{\varepsilon,\delta}}$. Since by assumption,
\begin{align*}
\lim_{n\to\infty}\varPhi(n)=+\infty,
\end{align*}
there exists $j\in\mathbb{N}$ such that $N_{j-1}\leq N<N_j$ and $j>j_{\varepsilon,\delta}$. Hence $x\not\in A_j$. It then follows from \eqref{Aj}, Lemma \ref{Psi difference} and Lemma \ref{js in terms of Phi} that
\begin{align}\label{first thm last ineq.1}
\Psi(N,x)-\Psi(N) &\leq\Psi(N_j,x)-\Psi(N_{j-1}) \nonumber\\
&\leq\Psi(N_j,x)-\Psi(N_j)+\Psi(N_j)-\Psi(N_{j-1}) \nonumber \\
&\leq|E(N_j,x)|+\Psi(N_j)-\Psi(N_{j-1}) \nonumber\\
&\leq j^2\log^{1+\varepsilon}{(j+2)}+\left(3+\frac{1+\varepsilon}{\log{3}}+\frac{1}{4\log^{1+\varepsilon}{4}}\right)j^2\log^{1+\varepsilon}{(j+2)} \nonumber\\
&=\left(4+\frac{1+\varepsilon}{\log{3}}+\frac{1}{4\log^{1+\varepsilon}{4}}\right)j^2\log^{1+\varepsilon}{(j+2)} \nonumber\\
&\leq \frac{4}{\log^{2\varepsilon/3}{(\varPhi_0+2)}}\left(\frac{\log{4}}{\log{3}}\right)^{1+\varepsilon}\left(4+\frac{1+\varepsilon}{\log{3}}+\frac{1}{4\log^{1+\varepsilon}{4}}\right)\varPhi^{2/3}(N_{j-1})\log^{1/3+\varepsilon}{(\varPhi{(N_{j-1})}+2)}\nonumber\\
&\leq \frac{4}{\log^{2\varepsilon/3}{(\varPhi_0+2)}}\left(\frac{\log{4}}{\log{3}}\right)^{1+\varepsilon}\left(4+\frac{1+\varepsilon}{\log{3}}+\frac{1}{4\log^{1+\varepsilon}{4}}\right)\varPhi^{2/3}(N)\log^{1/3+\varepsilon}{(\varPhi{(N)}+2)}.
\end{align}
We now consider $N\leq N_{\varepsilon,\delta}$. We note that by the definition of $f_{k}(x)$ and $f_{k}$, we have that for all $N \in \mathbb{N}$,
\[-N \leq \sum_{k=1}^{N}\left(f_{k}(x)-f_{k}\right) \leq CN.\]
It follows that
\[\sum_{k=1}^{N}\left(f_{k}(x)-f_{k}\right) \leq N(C+1).\]
We note that if $C\geq 1$, then we have that
\begin{equation}\label{first thm last ineq.2}
\left|\sum_{k=1}^{N}\left(f_{k}(x)-f_{k}\right)\right|\leq NC.    
\end{equation}
We now assume that $C\geq 1$; indeed if it is not, then all the functions $f_{k}(x)$ are also bounded above by 1. Thus, for all $N \leq N_{\varepsilon,\delta}$ inequality \eqref{first thm last ineq.2} holds, and for all $N > N_{\varepsilon,\delta}$ we have inequality \eqref{first thm last ineq.1}. This proves the theorem.

\subsection{Proof of Theorem \ref{thm3}}
We now prove Theorem \ref{thm3}. As commented in \cite{harman1998metric}, we improve the spacing between $N_{j}$'s to improve the bound obtained in Theorem \ref{thm2}. We initially follow the proof of Lemma 1.5 from \cite{harman1998metric}, before quantifying the remainder term.

For any $j\in\mathbb{N}$, define 
\begin{align*}
n_j=\max{\{n\in\mathbb{N}:\varPhi(n)<j\}}.
\end{align*}
For all $j\in\mathbb{N}$, $j$ can be written in binary as 
\begin{align*}
j=\sum_{v=0}^{\lfloor\log_2{j}\rfloor}2^v b(j,v),
\end{align*}
where $b:\mathbb{N}\times\mathbb{N}_0\to\{0,1\}$.

Define $r_j=\lfloor\log_2{j}\rfloor$ and let
\begin{align*}
B_j=\left\{(i,s)\in\mathbb{N}_0\times\mathbb{N}_0:i=\sum_{v=s+1}^{r_j}2^{v-s}b(j,v),\;b(j,s)=1,\;0\leq s\leq r_j\right\}.
\end{align*}
Notice that $|B_j|\leq r_j+1$ and 
\begin{align}\label{partition}
\left(0,n_j\right]=\bigcup_{(i,s)\in B_j}\left(n_{i2^s},n_{(i+1)2^s}\right].
\end{align}
An concrete example of how this works is given in \cite{harman1998metric} for $j=37$.

Returning to the proof, we now define  for any $(i,s)\in\mathbb{N}_0\times\mathbb{N}_0$ and $x\in X$ a function $F:\mathbb{N}_0\times\mathbb{N}_0\times X\to\mathbb{R}$ as follows:
\begin{align*}
F(i,s,x)=\sum_{k=n_{i2^s}+1}^{n_{(i+1)2^s}}(f_k(x)-f_k).
\end{align*}
Then, by \eqref{partition}, for any $x\in X$, 
\begin{align}\label{proof56}
\sum_{k=1}^{n_j}(f_k(x)-f_k)=\sum_{(i,s)\in B_j}F(i,s,x).
\end{align}
For any $\varepsilon>0$, define 
\begin{align*}
M_{\varepsilon}&=\frac{\sqrt{2}}{\log^{3/2+\varepsilon/2}{2}}>1.
\end{align*}

We now give some lemmas we will use in the rest of the proof. 

\begin{lemma}\label{lemma6} 
For any $\varepsilon>0$ and $\delta>0$, there exists $E_{\varepsilon,\delta}\subset X$ and $r_{\varepsilon,\delta}\in\mathbb{N}$ such that $\mu(E_{\varepsilon,\delta})<\delta$, and for any $x\in X\setminus E_{\varepsilon,\delta}$ and $r\in\mathbb{N}$, if $r>r_{\varepsilon,\delta}$ then
\begin{align*}
\left|\sum_{k=1}^{n_r}(f_k(x)-f_k)\right|\leq M_{\varepsilon}\left(r^{1/2}\log^{3/2+\varepsilon}{(r+2)}\right).
\end{align*}
\end{lemma}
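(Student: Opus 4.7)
The strategy is to establish a Rademacher--Menshov-style maximal inequality for the partial sums $S_r(x) := \sum_{k=1}^{n_r}(f_k(x)-f_k)$ using the dyadic representation already set up in \eqref{partition}--\eqref{proof56}. Fix a dyadic level $l \in \mathbb{N}$. Since every $r \leq 2^l$ satisfies $|B_r| \leq r_r + 1 \leq l+1$, Cauchy--Schwarz applied to \eqref{proof56} gives $S_r(x)^2 \leq (l+1) \sum_{(i,s) \in B_r} F(i,s,x)^2$. Because every pair $(i,s)$ that can appear in $B_r$ for some $r \leq 2^l$ has $0 \leq s \leq l$ and $0 \leq i < 2^{l-s}$, taking the maximum over $r \leq 2^l$ and dropping the constraint $(i,s) \in B_r$ yields
\begin{align*}
\max_{r \leq 2^l} S_r(x)^2 \leq (l+1) \sum_{s=0}^{l} \sum_{i=0}^{2^{l-s}-1} F(i,s,x)^2.
\end{align*}

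Integrating against $\mu$ and invoking the block-orthogonality hypothesis \eqref{1.2.7.a}, which guarantees $\int_X F(i,s,x)^2 \,\textrm{d}\mu \leq K(\varPhi(n_{(i+1)2^s}) - \varPhi(n_{i2^s}))$, the inner sum over $i$ telescopes to at most $K\varPhi(n_{2^l}) < K \cdot 2^l$ for each fixed $s$, so summing over the $l+1$ scales produces
\begin{align*}
\int_X \max_{r \leq 2^l} S_r(x)^2 \,\textrm{d}\mu \leq K\,(l+1)^2\, 2^l.
\end{align*}
Markov's inequality with the threshold $\lambda_l := M_\varepsilon\, 2^{l/2}(l+1)^{3/2+\varepsilon}$ then produces
\begin{align*}
\mu\left(\left\{x \in X : \max_{r \leq 2^l}|S_r(x)| > \lambda_l\right\}\right) \leq \frac{K}{M_\varepsilon^2\,(l+1)^{1+2\varepsilon}},
\end{align*}
which is summable in $l$. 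For any $\delta>0$ I choose $l_{\varepsilon,\delta}$ large enough that the tail $\sum_{l \geq l_{\varepsilon,\delta}}$ of these bounds is less than $\delta$, let $E_{\varepsilon,\delta}$ be the union of the corresponding tail events, and set $r_{\varepsilon,\delta} := 2^{l_{\varepsilon,\delta}}$. For $x \notin E_{\varepsilon,\delta}$ and $r > r_{\varepsilon,\delta}$, the unique $l \geq l_{\varepsilon,\delta}$ with $2^{l-1} < r \leq 2^l$ satisfies $2^{l/2} \leq \sqrt{2r}$ and $l+1 \leq \log(r+2)/\log 2 + O(1)$, and after absorbing these conversions into the prescribed constant $M_\varepsilon = \sqrt{2}/\log^{3/2+\varepsilon/2}(2)$ one recovers the stated bound.

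The main obstacle is the constant-tracking: Cauchy--Schwarz loses a factor $(l+1)$ to the sum of squares, and converting $(l+1)^{3/2+\varepsilon}$ into $\log^{3/2+\varepsilon}(r+2)$ loses a further factor $\log^{-3/2-\varepsilon}(2)$. The slightly stronger exponent $3/2+\varepsilon/2$ appearing inside $M_\varepsilon$ is forced so that the maximal-inequality threshold produces a tail that remains summable in $l$ while matching the cleanest possible multiplicative constant. Fixing $l_{\varepsilon,\delta}$ large enough to absorb both these losses together with the contributions of the initial dyadic levels, and then translating this into an explicit value of $r_{\varepsilon,\delta}$, is the delicate remaining calculation; the rest is the standard Rademacher--Menshov machinery, analogous to Lemma~1.5 of \cite{harman1998metric}.
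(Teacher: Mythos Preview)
Your proposal is correct and follows essentially the same route as the paper: the paper likewise defines $G(r,x)=\sum_{s=0}^{r}\sum_{i=0}^{2^{r-s}-1}F(i,s,x)^2$, bounds $\int_X G(r,x)\,\mathrm{d}\mu\leq 2Kr\cdot 2^r$ by telescoping via \eqref{1.2.7.a}, applies Markov's inequality with threshold $r^{2+\varepsilon}2^r$ to obtain $\mu(A_r)<2Kr^{-1-\varepsilon}$, and then invokes Cauchy--Schwarz against $|B_j|\leq r$ exactly as you do. The only cosmetic difference is that the paper places its threshold on $G(r,x)$ itself rather than on the maximal function $\max_{j\leq 2^l}|S_j(x)|$, which lets the constant $M_\varepsilon=\sqrt{2}/\log^{3/2+\varepsilon/2}2$ fall out directly without the extra conversion step you flag at the end.
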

\begin{proof} 
By \eqref{proof56}, it suffices to prove that there exists $r_{\varepsilon,\delta}\in\mathbb{N}$ such that for any $r\in\mathbb{N}$ such that $r>r_{\varepsilon,\delta}$,  
\begin{align*}
\left|\sum_{(i,s)\in B_r}F(i,s,x)\right|\leq\sum_{(i,s)\in B_r}|F(i,s,x)|\leq M_{\varepsilon}\left(r^{1/2}\log^{3/2+\varepsilon}{(r+2)}\right).
\end{align*}

For any $r,\,i,\,s\in\mathbb{N}_0$, $x\in X$, define
\begin{align}
G(r,x)&=\sum_{s=0}^r\sum_{i=0}^{2^{r-s}-1}F^2(i,s,x), \label{G(r,x)}\\
\varPhi(i,s)&=\sum_{k=n_{i2^s}+1}^{n_{(i+1)2^s}}\varphi_k\coloneqq\varPhi{(n_{(i+1)2^s})} \nonumber \\
A_r&=\left\{x\in X:G(r,x)>r^{2+\varepsilon}2^r\right\} \label{Ar}.
\end{align}


Pick any $r\in\mathbb{N}$. By \eqref{1.2.7.a}, 
\begin{align*}\int_XG(r,x)\,d\mu(x)&\leq K\sum_{s=0}^r\sum_{i=0}^{2^{r-s}-1}\varPhi(i,s) \nonumber\\
&=K\sum_{s=0}^r\sum_{i=0}^{2^{r-s}-1}\left(\varPhi{(n_{(i+1)2^s})}-\varPhi{(n_{i2^s})}\right)\nonumber\\
&=K\sum_{s=0}^r\varPhi(n_{2^s})\nonumber\\
&\leq K(r+1)\varPhi(n_{2^r})\nonumber\\
&< 2Kr2^r.
\end{align*}
Since \begin{align}r^{2+\varepsilon}2^r\mu(A_r)&\leq\int_{A_r}G(r,x)\,d\mu(x)<2rK2^r, \nonumber
\end{align}
we obtain that 
\begin{equation*}\label{measure Ar}
  \mu(A_r)<2Kr^{-1-\varepsilon} . 
\end{equation*}

Pick any $\delta>0$ and define
\begin{align*}
r_{\varepsilon,\delta}&=1+\left\lceil\left(\frac{2K}{\varepsilon\delta}\right)^{1/\varepsilon}\right\rceil. 
\end{align*}
It follows that
\begin{align*}
\sum_{r=r_{\varepsilon,\delta}}^\infty\mu(A_r)&<\int_{r_{\varepsilon,\delta}-1}^\infty\frac{2K}{x^{1+\varepsilon}}dx =\frac{2K}{\varepsilon {(r_{\varepsilon,\delta}-1)}^\varepsilon}\leq\delta.
\end{align*}
We now take
\begin{align*}
E_{\varepsilon,\delta}=\bigcup_{r=r_{\varepsilon,\delta}}^\infty A_r.
\end{align*}
As in the previous proof, we note that by the sub-additivity of measures,
\begin{align*}
\mu(E_{\varepsilon,\delta})=\mu\left(\bigcup_{r=r_{\varepsilon,\delta}}^\infty A_r\right)\leq\sum_{r=r_{\varepsilon,\delta}}^\infty\mu(A_r)<\delta.
\end{align*}
Pick any $x\in X\setminus E_{\varepsilon,\delta}$. For any $r\in\mathbb{N}$, $r>r_{\varepsilon,\delta}$, we necessarily have that $x\not\in A_r$ and 
\begin{align}\label{G bound}
G(r,x)\leq r^{2+\varepsilon}2^r.
\end{align}
Pick any $j\in\mathbb{N}$ such that $j>2^{r_{\varepsilon,\delta}}$. Then $\lfloor\log_2{j}\rfloor+1>r_{\varepsilon,\delta}$. Taking $r= \lfloor\log_2{j}\rfloor+1>r_{\varepsilon,\delta}$, we find that
\begin{align*}
\sum_{(i,s)\in B_j}|F(i,s,x)| &=\sum_{s=0}^r\sum_{i=0}^{2^{r-s}-1}|F(i,s,x)|\chi_{B_j}(i,s),
\end{align*}
where $\chi_{B_j}(i,s)$ is the characteristic function on $B_{j}$.
Notice that $2^r\leq 2j$. Applying this, the Cauchy-Schwarz inequality, \eqref{G(r,x)}, \eqref{Ar} and \eqref{G bound} gives us that
\begin{align}
\sum_{(i,s)\in B_j}|F(i,s,x)|&\leq\left(\sum_{s=0}^r\sum_{i=0}^{2^{r-s}-1}|F(i,s,x)|^2\right)^{1/2}\left(\sum_{s=0}^r\sum_{i=0}^{2^{r-s}-1}\chi_{B_j}(i,s)\right)^{1/2} \nonumber\\
&=G^{1/2}(r,x)|B_j|^{1/2} \nonumber\\
&\leq r^{1+\varepsilon/2}2^{r/2}r^{1/2}\nonumber\\
&=2^{r/2}r^{3/2+\varepsilon/2}\nonumber\\
&\leq \sqrt{2}j^{1/2}\log^{3/2+\varepsilon/2}_2{j}\nonumber\\
&\leq \frac{\sqrt{2}}{\log^{3/2+\varepsilon/2}2}j^{1/2}\log^{3/2+\varepsilon/2}{j}\nonumber\\
&\leq M_{\varepsilon}\left(j^{1/2}\log^{3/2+\varepsilon/2}{j}\right), \nonumber
\end{align}
which proves the lemma.
\end{proof}

\begin{lemma}
For any $\varepsilon>0$ and $r\in\mathbb{N}$,
\begin{align*}
(r+1)^{1/2}\log^{3/2+\varepsilon}{(r+3)}\leq\sqrt{2}\left(\frac{\log{4}}{\log{3}}\right)^{3/2+\varepsilon}r^{1/2}\log^{3/2+\varepsilon}{(r+2)}.
\end{align*}
\end{lemma}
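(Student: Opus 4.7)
The plan is to split the target ratio into two independent factors and bound each by one of the two constants $\sqrt{2}$ and $\log 4/\log 3$ appearing on the right. Specifically, for $r \in \mathbb{N}$ (so $r\geq 1$), I would write
\[
\frac{(r+1)^{1/2}\log^{3/2+\varepsilon}{(r+3)}}{r^{1/2}\log^{3/2+\varepsilon}{(r+2)}}
=\left(\frac{r+1}{r}\right)^{1/2}\left(\frac{\log(r+3)}{\log(r+2)}\right)^{3/2+\varepsilon},
\]
and then prove the two factor bounds
\[
\left(\frac{r+1}{r}\right)^{1/2}\leq\sqrt{2},\qquad \frac{\log(r+3)}{\log(r+2)}\leq\frac{\log 4}{\log 3},
\]
both valid for every $r\geq 1$, with equality precisely at $r=1$. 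Multiplying these bounds, and raising the second through the fixed positive exponent $3/2+\varepsilon$, gives the claim immediately.

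The first bound is just $(1+1/r)^{1/2}\leq(1+1)^{1/2}=\sqrt{2}$, monotone in $r$. The second is the only mildly substantive piece of the argument: I would show that the function $h(x)=\log(x+1)/\log(x)$ is decreasing on $[3,\infty)$. Differentiating,
\[
h'(x)=\frac{\tfrac{1}{x+1}\log x-\tfrac{1}{x}\log(x+1)}{\log^2 x}=\frac{x\log x-(x+1)\log(x+1)}{x(x+1)\log^2 x},
\]
and since $g(t)=t\log t$ is strictly increasing for $t\geq 1$, the numerator is negative; hence $h$ is decreasing. Setting $x=r+2\geq 3$ yields $h(r+2)\leq h(3)=\log 4/\log 3$, as required. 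Raising to $3/2+\varepsilon>0$ preserves the inequality.

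Combining the two factor inequalities gives
\[
(r+1)^{1/2}\log^{3/2+\varepsilon}(r+3)\leq\sqrt{2}\left(\frac{\log 4}{\log 3}\right)^{3/2+\varepsilon}r^{1/2}\log^{3/2+\varepsilon}(r+2),
\]
which is the statement of the lemma. There is no real obstacle here; the only point that takes a moment's thought is verifying monotonicity of $h$, which comes down to the monotonicity of $t\log t$. The reason both bounds are sharp at $r=1$ is that the lemma is evidently designed to be tight exactly at the boundary of the previous lemma's regime, where the substitution $r=\lfloor\log_2 j\rfloor+1$ first becomes admissible.
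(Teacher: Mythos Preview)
Your proof is correct and follows essentially the same approach as the paper: split the ratio into $\sqrt{(r+1)/r}$ and $(\log(r+3)/\log(r+2))^{3/2+\varepsilon}$, then bound each factor at $r=1$. The only difference is that you supply a derivative argument for the monotonicity of $\log(x+1)/\log x$, whereas the paper simply asserts the bound without justification.
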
\begin{proof}
This follows from the following bound:
\begin{align*}
\frac{(r+1)^{1/2}\log^{3/2+\varepsilon}{(r+3)}}{r^{1/2}\log^{3/2+\varepsilon}{(r+2)}}
&= \sqrt{\frac{r+1}{r}}\left(\frac{\log{(r+3)}}{\log{(r+2)}}\right)^{3/2+\varepsilon} \nonumber\\
&\leq\sqrt{2}\left(\frac{\log{4}}{\log{3}}\right)^{3/2+\varepsilon}.
\end{align*}
The lemma follows.
\end{proof}

We are now able to finish the proof of Theorem \ref{thm3}. Pick any $\varepsilon>0$ and $\delta>0$. By Lemma \ref{lemma6}, there exists a $\mu$-measurable $E_{\varepsilon,\delta}\subset X$ and $r_{\varepsilon,\delta}\in\mathbb{N}$ such that $\mu(E_{\varepsilon,\delta})<\delta$ and for any $x\in X \setminus E_{\varepsilon,\delta}$ and $r\in\mathbb{N}$ with $r>r_{\varepsilon,\delta}$, \begin{align*}
\left|\sum_{k=1}^{n_r}(f_k(x)-f_k)\right|&\leq M_{\varepsilon}\left(r^{1/2}\log^{3/2+\varepsilon}{(r+2)}\right).
\end{align*}
Pick any $n\in\mathbb{N}$ such that $n>n_{r_{\varepsilon,\delta}}$. Since by assumption
\begin{align*}
\lim_{N\to\infty}\varPhi(N)=\lim_{r\to\infty}n_r=+\infty,
\end{align*}
there exists $r\in\mathbb{N}$ such that  $n_r<n<n_{r+1}$ and $r>r_{\varepsilon,\delta}$. Notice that for any $x\in X$,
\begin{align*}
\sum_{k=1}^{n_r}f_k(x)\leq\sum_{k=1}^{n}f_k(x)\leq \sum_{k=1}^{n_{r+1}}f_k(x),
\end{align*}
and for any $x\in X\setminus E_{\varepsilon,\delta}$
\begin{align}
\left|\sum_{k=1}^{n_r}(f_k(x)-f_k)\right|&\leq M_{\varepsilon}\left(r^{1/2}\log^{3/2+\varepsilon}{(r+2)}\right), \label{bound a}
\end{align}
and
\begin{align}
\left|\sum_{k=1}^{n_{r+1}}(f_k(x)-f_k)\right|&\leq M_{\varepsilon}\left((r+1)^{1/2}\log^{3/2+\varepsilon}{(r+3)}\right) \label{bound b}.
\end{align}
We note that $M_{\varepsilon}>1$, $\varPhi(n_{r+1})<r+1$ and $r\leq\varPhi(n_r+1)$, so \begin{align*}
\sum_{k=n_r+1}^{n_{r+1}}f_k&=f_{n_r+1}+\sum_{k=n_r+2}^{n_{r+1}}f_k \nonumber\\
&\leq f_{n_r+1}+\sum_{k=n_r+2}^{n_{r+1}}\varphi_k \nonumber\\
&\leq \max_{1\leq k\leq n}f_k+\varPhi(n_{r+1})-\varPhi(n_{r}+1) \nonumber\\
&<\max_{1\leq k\leq n}f_k+(r+1)-r\nonumber\\
&=1+\max_{1\leq k\leq n}f_k.
\end{align*}
Since 
\begin{align*}
\sum_{k=1}^{n_r}f_k(x)-\sum_{k=1}^{n}f_k\leq\sum_{k=1}^{n}(f_k(x)-f_k)\leq\sum_{k=1}^{n_{r+1}}f_k(x)-\sum_{k=1}^{n}f_k,
\end{align*}
we get that
\begin{align*}
\sum_{k=1}^{n_r}(f_k(x)-f_k)-1-\max_{1\leq k\leq n}f_k&\leq\sum_{k=1}^{n}(f_k(x)-f_k)\nonumber \\
&\leq\sum_{k=1}^{n_{r+1}}(f_k(x)-f_k)+1+\max_{1\leq k\leq n}f_k.
\end{align*}
Notice that $r^{1/2}\log^{3/2+\varepsilon}{(r+2)}>1$. Hence by this, \eqref{bound a} and \eqref{bound b} we get that
\begin{align}
&\phantom{\leq}-M_{\varepsilon}\left(1+\frac{1}{\log^{3/2+\varepsilon}{3}}\right)\left( r^{1/2}\log^{3/2+\varepsilon}{(r+2)}+\max_{1\leq k\leq n}f_k\right) \nonumber\\
&\leq-M_{\varepsilon}\left(r^{1/2}\log^{3/2+\varepsilon}{(r+2)}\right)-1-\max_{1\leq k\leq n}f_k \nonumber\\
&\leq\sum_{k=1}^{n}(f_k(x)-f_k)\nonumber\\
&\leq M_{\varepsilon}\left((r+1)^{1/2}\log^{3/2+\varepsilon}{(r+3)}\right)+1+\max_{1\leq k\leq n}f_k \nonumber\\
&\leq M_{\varepsilon}\left(1+\frac{1}{\sqrt{2}\log^{3/2+\varepsilon}{4}}\right)\left((r+1)^{1/2}\log^{3/2+\varepsilon}{(r+3)}+\max_{1\leq k\leq n}f_k\right) \nonumber\\
&\leq \sqrt{2}M_{\varepsilon}\left(1+\frac{1}{\sqrt{2}\log^{3/2+\varepsilon}{4}}\right)
\left(\frac{\log{4}}{\log{3}}\right)^{3/2+\varepsilon}\left( r^{1/2}\log^{3/2+\varepsilon}{(r+2)}+\max_{1\leq k\leq n}f_k\right).
\end{align}
Thus, as $r\leq\varPhi(n)$, 
\begin{align}\label{second thm last ineq 1}
\left|\sum_{k=1}^{n}(f_k(x)-f_k)\right|
&\leq \sqrt{2}M_{\varepsilon}\left(1+\frac{1}{\sqrt{2}\log^{3/2+\varepsilon}{4}}\right)
\left(\frac{\log{4}}{\log{3}}\right)^{3/2+\varepsilon}\left( r^{1/2}\log^{3/2+\varepsilon}{(r+2)}+\max_{1\leq k\leq n}f_k\right) \nonumber\\
&\leq K_{\varepsilon,\delta}\left(\varPhi^{1/2}(n)\log^{3/2+\varepsilon}{(\varPhi(n)+2)}+\max_{1\leq k\leq n}f_k\right). 
\end{align}

We now consider $N \leq N_{\varepsilon,\delta}$. We have that for all $n \in \mathbb{N}$,
\[-N \max_{1\leq k\leq N}f_{k} \leq \sum_{k=1}^{N}\left(f_{k}(x)-f_{k}\right)\leq NC.\]

It follows immediately that
\begin{equation*}\label{second thm last ineq 2}
    \left|\sum_{k=1}^{N}\left(f_{k}(x)-f_{k}\right)\right| \leq N\left(C+\max_{1\leq k\leq N} f_{k}\right).
\end{equation*}
For all $N \leq N_{\varepsilon,\delta}$ we have that inequality \eqref{second thm last ineq 1} holds, and for all $N> N_{\varepsilon,\delta}$. This concludes the proof of Theorem \ref{thm3}.

\subsection{Proof of Lemma \ref{Aist general thm}}

\begin{proof}

The method to prove Theorem 1 from \cite{aistleitner2022metric} is generalised in this proof, and instead of requiring the specific measure space used in \cite{aistleitner2022metric}, this generalised version can be applied to general measure spaces.

Initially assume $C>4$. Define for any $k\in\mathbb{N}$,  \begin{align*}
    Q_{k}&=\min \left\{Q\in\mathbb{N}:\Psi(Q) \geq e^{k^{1/\sqrt{C}}}\right\}, 
\end{align*}
and let
\begin{align*}
    \mathcal{B}_{k}&=\left\{x \in X\,:\, \left|\sum_{1\leq q \leq Q_{k}}\left(f_{q}(x)-f_{q}\right)\right| \geq \frac{\Psi\left(Q_{k}\right)}{\left(\log \Psi\left(Q_{k}\right)\right)^{{C}/{4}}}\right\}. 
\end{align*}

By \eqref{Aist int} we obtain that
\begin{align}
    \mu\left(\mathcal{B}_{k}\right) \cdot \left(\frac{\Psi\left(Q_{k}\right)}{\left(\log \Psi\left(Q_{k}\right)\right)^{C/4}}\right)^{2} & \leq \int_{\mathcal{B}_{k}}\left(\sum_{1\leq q \leq Q_{k}}\left(f_{q}(x)-f_{q}\right)\right)^{2} \textrm{d}\mu \nonumber \\
    &\leq \int_{X}\left(\sum_{1\leq q \leq Q_{k}}\left(f_{q}(x)-f_{q}\right)\right)^{2} \textrm{d}\mu \nonumber \\
    &= O\left(\frac{\Psi(Q_{k})^{2}}{\left(\log \Psi(Q_{k})\right)^{C}}\right). \nonumber
\end{align}

It immediately follows that
\begin{equation}\label{measure Bk Aist}
    \mu\left(\mathcal{B}_{k}\right) \leq O\left(\left(\log \Psi\left(Q_{k}\right)\right)^{-C/2}\right) \leq O\left(k^{-\sqrt{C}/{2}}\right).
\end{equation}

As $C>4$ we have that
\[\sum_{k=1}^{\infty} \mu \left(\mathcal{B}_{k}\right)<+\infty,\]
so applying the Borel-Cantelli Lemma we find that almost all $x\in X$ are contained in at most finitely many of the sets $\mathcal{B}_{k}$. That is, for almost all $x$ there exists a $k_{0}(x)$ such that for all $k>k_{0}(x)$, we have that
\[\left|\sum_{1\leq q \leq Q_{k}}\left(f_{q}(x)-f_{q}\right)\right| \leq \frac{\Psi\left(Q_{k}\right)}{\left(\log \Psi\left(Q_{k}\right)\right)^{{C}/{4}}}.\]

For any $Q\geq 3$ there is a $k\in\mathbb{N}$ such that $Q_{k} \leq Q \leq Q_{k+1}$. This means that
\[\sum_{q=1}^{Q_{k}} \phi_{q} \leq\sum_{q=1}^{Q} \phi_{q} \leq \sum_{q=1}^{Q_{k+1}} \phi_{q}. \]
As by assumption $\phi_{q}\leq K$ for all $q\in\mathbb{N}$, we have that $\Psi\left(Q_{k}\right)\in \left[e^{k^{\frac{1}{\sqrt{C}}}},\,e^{k^{\frac{1}{\sqrt{C}}}}+K\right]$. It follows that
\[\frac{\Psi(Q_{k+1})}{\Psi(Q_{k})}=1+O\left(k^{-1+\frac{1}{\sqrt{C}}}\right)=1+O\left(\left(\log\Psi(Q_{k})\right)^{1-\sqrt{C}}\right).\]

From these formulae and the triangle inequality, it follows that for almost all $x \in X$ there exists a $Q_{0}\coloneqq Q_{0}(x)$ such that for all $Q>Q_{0}$ we have that
\[\left|\sum_{1\leq q \leq Q}\left(f_{q}(x)-f_{q}\right)\right|=O\left(\frac{\Psi(Q)}{\left(\log\Psi(Q)\right)^{\sqrt{C}-1}}\right).\]
As $C$ can be taken arbitrarily large, this proves the lemma.
\end{proof}

\subsection{Proof of Theorem \ref{effective v3}}
\begin{proof}
First we note that by \eqref{Aist int2}, for arbitrary $C>0$ there exists a $\mathcal{C} \in \mathbb{R}$ such that
\begin{equation*}\label{int with const}
\int_{X}\left(\sum_{1\leq q \leq Q}\left(f_{q}(x)-f_{q}\right)\right)^{2} \textrm{d}\mu \leq \mathcal{C}\frac{\Psi(Q)^{2}}{\left(\log \Psi(Q)\right)^{C}}. 
\end{equation*}

As before, for $C>4$ and any $k \in \mathbb{N}$, let
\begin{equation}\label{aist Qk defn}
   Q_{k}=\min \left\{Q\in\mathbb{N}\,:\,\Psi(Q) \geq e^{k^{\frac{1}{\sqrt{C}}}}\right\},\; k\geq 1, 
\end{equation}
and let
\begin{equation*}\label{aist Bk defn}
 \mathcal{B}_{k}=\left\{x \in X\,:\, \left|\sum_{1\leq q \leq Q_{k}}\left(f_{q}(x)-f_{q}\right)\right| \geq \frac{\Psi\left(Q_{k}\right)}{\left(\log \Psi\left(Q_{k}\right)\right)^{{C}/{4}}}\right\}.   
\end{equation*}

By \eqref{Aist int2} we obtain that
\begin{align}
    \mu\left(\mathcal{B}_{k}\right) \cdot \left(\frac{\Psi\left(Q_{k}\right)}{\left(\log \Psi\left(Q_{k}\right)\right)^{{C}/{4}}}\right)^{2} & \leq \int_{\mathcal{B}_{k}}\left(\sum_{1\leq q \leq Q_{k}}\left(f_{q}(x)-f_{q}\right)\right)^{2} \textrm{d}\mu \nonumber \\
    &\leq \int_{X}\left(\sum_{1\leq q \leq Q_{k}}\left(f_{q}(x)-f_{q}\right)\right)^{2} \textrm{d}\mu \nonumber \\
    &\leq \mathcal{C}\frac{\Psi(Q_{k})^{2}}{\left(\log \Psi(Q_{k})\right)^{C}}, \nonumber
\end{align}
so it follows that
\begin{equation}\label{measure Bk Aist2}
    \mu\left(\mathcal{B}_{k}\right) \leq \mathcal{C}\left(\log \Psi\left(Q_{k}\right)\right)^{-\frac{C}{2}} \leq \mathcal{C}k^{-\frac{\sqrt{C}}{2}},
\end{equation}
where the second inequality follows from \eqref{aist Qk defn}.

Let
\[k_{C,\,\delta} \coloneqq\min \left\{k\in\mathbb{N}\,:\,\mathcal{C}k^{-\frac{\sqrt{C}}{2}}\left(1+\frac{2k}{\sqrt{C}-2}\right)<\delta\right\}\]

Then, by the integral test for convergence and that $C>4$, it follows that
\begin{align*}\label{Aist tail estimate}
    \sum_{k=k_{C,\,\delta}}^{\infty}\mu(\mathcal{B}_{k}) &\leq \mu(\mathcal{B}_{k_{C,\,\delta}})+\int_{k_{C,\,\delta}}^{\infty}\mu(\mathcal{B}_{k})\textrm{d}k \nonumber \\
    &\leq \mathcal{C}\left(k_{C,\,\delta}^{-\frac{\sqrt{C}}{2}}+\int_{k_{C,\,\delta}}^{\infty}k^{-\frac{\sqrt{C}}{2}}\textrm{d}k \right) \nonumber \\
    &=\mathcal{C}\left(k_{C,\,\delta}^{-\frac{\sqrt{C}}{2}}+\frac{2}{2-\sqrt{C}}\left[k^{1-\frac{\sqrt{C}}{2}}\right]_{k_{C,\,\delta}}^{\infty}\right) \nonumber \\
    &=\mathcal{C}k_{C,\,\delta}^{-\frac{\sqrt{C}}{2}}\left(1+\frac{2k_{C,\,\delta}}{\sqrt{C}-2}\right) <\delta.
    \end{align*}
    
Define 
\[E_{C,\,\delta}= \bigcup_{k=k_{C,\,\delta}}^{\infty}\mathcal{B}_{k}.\]

Then by the above, $\mu \left(E_{C,\,\delta}\right)< \delta$.

We also note as before, as $C>4$ and by considering \eqref{measure Bk Aist2} we have that $\sum_{k=1}^{\infty}\mu(\mathcal{B}_{k})$ converges, so by the Borel-Cantelli lemma, for almost all $x \in X$, there exists a $k_{0}=k_{0}(x)$ such that for all $k>k_{0}$,
\[\left|\sum_{1\leq q \leq Q_{k}}\left(f_{q}(x)-f_{q}\right)\right| \leq \frac{\Psi\left(Q_{k}\right)}{\left(\log \Psi\left(Q_{k}\right)\right)^{{C}/{4}}}.\]

For any $Q \in \mathbb{N},\,Q\geq 3$, as before we have that there exists $k\in\mathbb{N}$ satisfying $Q_{k}\leq Q \leq Q_{k+1}$ and so
\[\sum_{q=1}^{Q_{k}}\phi_{q}\leq \sum_{q=1}^{Q}\phi_{q}\leq\sum_{q=1}^{Q_{k+1}}\phi_{q}.\]

By the definition of $Q_{k}$ and as $\phi_{q}\leq K$ for all $q$,
\[\Psi(Q_{k}) \in \left[e^{k^{\frac{1}{\sqrt{C}}}},\,e^{k^{\frac{1}{\sqrt{C}}}}+K\right].\]

Following methods from earlier in this paper, we prove the following lemma.

\begin{lemma}\label{g bound 2}
 For any $k \in \mathbb{N}$ we have that
 \begin{equation*}\label{Psi difference 2}
     0\leq\Psi\left(Q_{k+1}\right)-\Psi \left( Q_{k}\right) \leq \frac{e\Psi\left(Q_{k}\right) \left(\log \Psi\left(Q_{k}\right)\right)^{1-\sqrt{C}}}{\sqrt{C}}+K.
 \end{equation*}
\end{lemma}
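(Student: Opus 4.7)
The plan is to split the claim into the trivial non-negativity and the substantive upper bound, and to carry the latter out by working with $L \coloneqq \log \Psi(Q_k)$ directly, rather than going through $k^{1/\sqrt{C}}$.

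For the lower inequality, since $(k+1)^{1/\sqrt{C}} \geq k^{1/\sqrt{C}}$, every $Q$ with $\Psi(Q) \geq e^{(k+1)^{1/\sqrt{C}}}$ also satisfies $\Psi(Q) \geq e^{k^{1/\sqrt{C}}}$, so the defining minimality of $Q_k$ and $Q_{k+1}$ forces $Q_{k+1} \geq Q_k$; as each $\phi_q \geq 0$, the partial sum $\Psi$ is non-decreasing and hence $\Psi(Q_{k+1}) \geq \Psi(Q_k)$.

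For the upper inequality, I would first use the minimality of $Q_{k+1}$ to get $\Psi(Q_{k+1}-1) < e^{(k+1)^{1/\sqrt{C}}}$ and then add the single extra term $\phi_{Q_{k+1}} \leq K$ to conclude $\Psi(Q_{k+1}) < e^{(k+1)^{1/\sqrt{C}}} + K$. Thus it suffices to prove the pointwise estimate
\[
e^{(k+1)^{1/\sqrt{C}}} \;\leq\; \Psi(Q_k) + \frac{e\,\Psi(Q_k)\bigl(\log \Psi(Q_k)\bigr)^{1-\sqrt{C}}}{\sqrt{C}}.
\]
Writing $L = \log \Psi(Q_k)$, the defining inequality $\Psi(Q_k) \geq e^{k^{1/\sqrt{C}}}$ gives $L \geq k^{1/\sqrt{C}}$, i.e.\ $k \leq L^{\sqrt{C}}$. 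Bernoulli's inequality with exponent $1/\sqrt{C} \in (0,1)$, applied to the factorisation $(L^{\sqrt{C}}+1)^{1/\sqrt{C}} = L\bigl(1+L^{-\sqrt{C}}\bigr)^{1/\sqrt{C}}$, yields
\[
(k+1)^{1/\sqrt{C}} \;\leq\; (L^{\sqrt{C}}+1)^{1/\sqrt{C}} \;\leq\; L + \frac{L^{1-\sqrt{C}}}{\sqrt{C}}.
\]
Since $k \geq 1$ and $1/\sqrt{C}>0$ force $L \geq 1$, we have $L^{1-\sqrt{C}}/\sqrt{C} \leq 1/\sqrt{C} < 1$; so the elementary inequality $e^x \leq 1 + ex$ valid on $[0,1]$ gives
\[
e^{(k+1)^{1/\sqrt{C}}} \;\leq\; e^{L}\left(1 + \frac{e\,L^{1-\sqrt{C}}}{\sqrt{C}}\right) \;=\; \Psi(Q_k) + \frac{e\,\Psi(Q_k)\bigl(\log \Psi(Q_k)\bigr)^{1-\sqrt{C}}}{\sqrt{C}},
\]
which is what is required. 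Subtracting $\Psi(Q_k)$ from $\Psi(Q_{k+1}) < e^{(k+1)^{1/\sqrt{C}}}+K$ then delivers the stated bound.

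The main subtlety I anticipate is directional. A naive application of the mean value theorem to $h(x) = e^{x^{1/\sqrt{C}}}$ on $(k,k+1)$ produces the factor $k^{1/\sqrt{C}-1}$; since $1/\sqrt{C}-1<0$, the inequality $k\leq L^{\sqrt{C}}$ only implies $k^{1/\sqrt{C}-1}\geq L^{1-\sqrt{C}}$ — the wrong direction to substitute, so one would obtain an estimate that is strictly weaker than claimed. Performing the expansion around $L$ itself (via Bernoulli on $1+L^{-\sqrt{C}}$) is what makes $(\log \Psi(Q_k))^{1-\sqrt{C}}$ appear with the correct sign and the proof close cleanly.
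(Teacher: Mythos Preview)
Your proof is correct and takes a genuinely different route from the paper. The paper applies the mean value theorem to $g(x)=e^{x^{1/\sqrt{C}}}$ on $(k,k+1)$, bounds $g'(\xi_k)\leq \frac{e\cdot e^{k^{1/\sqrt{C}}}k^{1/\sqrt{C}-1}}{\sqrt{C}}$, replaces $e^{k^{1/\sqrt{C}}}$ by $\Psi(Q_k)$, and then passes from $k^{1/\sqrt{C}-1}$ to $(\log\Psi(Q_k))^{1-\sqrt{C}}$ ``from the definition of $Q_k$''. You instead set $L=\log\Psi(Q_k)$, use $k\leq L^{\sqrt{C}}$, apply Bernoulli to $(1+L^{-\sqrt{C}})^{1/\sqrt{C}}$, and finish with $e^x\leq 1+ex$ on $[0,1]$. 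The comparison is not merely stylistic: the subtlety you flag is real. From $\Psi(Q_k)\geq e^{k^{1/\sqrt{C}}}$ one only gets $\log\Psi(Q_k)\geq k^{1/\sqrt{C}}$, and since $1-\sqrt{C}<0$ this yields $(\log\Psi(Q_k))^{1-\sqrt{C}}\leq k^{1/\sqrt{C}-1}$, the reverse of what the paper's final substitution requires. By expanding around $L$ rather than around $k$, your argument makes $(\log\Psi(Q_k))^{1-\sqrt{C}}$ appear with the correct orientation from the start, so the proof closes without that directional gap.
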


\begin{proof}
We follow the same method we used to prove Lemma \ref{Psi difference}. Let
\[g(x) =e^{x^{1/\sqrt{C}}}.\]
Then by the mean value theorem, for any $k\in\mathbb{N}$, there exists a $\xi_{k}\in \left(k,\,k+1 \right)$ such that
\[g(k+1)-g(k)=g'(\xi_{k}).\]

Calculating, we see that for any $x \in (k,\,k+1)$,
\begin{align*}
    g'(x) &=\frac{e^{x^{\frac{1}{\sqrt{C}}}}x^{\frac{1}{\sqrt{C}}-1}}{\sqrt{C}} \nonumber \\
    &\leq \frac{e^{\left( k+1 \right)^{\frac{1}{\sqrt{C}}}}\left(k+1 \right)^{\frac{1}{\sqrt{C}}-1}}{\sqrt{C}} \nonumber \\
    &\leq \frac{e\cdot e^{k^{\frac{1}{\sqrt{C}}}} k^{\frac{1}{\sqrt{C}}-1}}{\sqrt{C}} \nonumber \\
    &\leq \frac{e\Psi\left(Q_{k}\right) k^{\frac{1}{\sqrt{C}}-1}}{\sqrt{C}},
\end{align*}
where the last line follows from the definition of $Q_{k}$.

Note that $\Psi\left(Q_{k+1}-1\right) < e^{\left(k+1\right)^{\frac{1}{\sqrt{C}}}}$. We now consider
\begin{align*}
    \Psi\left(Q_{k+1}\right) - \Psi \left(Q_{k}\right) &= \Psi\left(Q_{k+1}-1\right)+ \phi_{Q_{k+1}} -\Psi\left(Q_{k}\right) \nonumber \\
    & \leq e^{\left(k+1\right)^{\frac{1}{\sqrt{C}}}} - e^{k^{\frac{1}{\sqrt{C}}}} + K \nonumber \\
    &= g(k+1)-g(k)+K \nonumber \\
    &\leq \frac{e\Psi\left(Q_{k}\right) k^{\frac{1}{\sqrt{C}}-1}}{\sqrt{C}} + K \nonumber \\
    &\leq \frac{e\Psi\left(Q_{k}\right) \left(\log \Psi\left(Q_{k}\right)\right)^{1-\sqrt{C}}}{\sqrt{C}}+K,
\end{align*}
where the second to last line follows from Lemma \ref{g bound 2}, and the last line from the definition of $Q_{k}$ given at \eqref{aist Qk defn}.
\end{proof}

We now prove the result. Given $Q\in\mathbb{N}$ such that $Q>k_{C,\,\delta}$, with $Q_{k} \leq Q < Q_{k+1}$, we see that for any $x\in X\setminus E_{C,\,\delta}$,
\begin{align}\label{triangle ineq 2}
    \left|\sum_{q=1}^{Q}\left(f_{q}(x)-f_{q}\right)\right| &\leq \left|\sum_{q=1}^{Q_{k+1}}f_{q}(x) -\sum_{q=1}^{Q_{k}} f_{q}\right| \nonumber\\
    &\leq \left|\sum_{q=1}^{Q_{k+1}}f_{q}(x) -\Psi(Q_{k})\right| \nonumber \\
    &= \left|\sum_{q=1}^{Q_{k+1}}f_{q}(x) -\Psi\left(Q_{k+1}\right) +\Psi\left(Q_{k+1}\right) -\Psi(Q_{k})\right| \nonumber\\
    &\leq \left|\sum_{q=1}^{Q_{k+1}}f_{q}(x) -\Psi\left(Q_{k+1}\right)\right|+\left|\Psi\left(Q_{k+1}\right) -\Psi(Q_{k})\right| \nonumber\\
    &\leq \frac{\Psi\left(Q_{k+1}\right)}{\left(\log \Psi\left(Q_{k+1}\right)\right)^{{C}/{4}}}+\frac{e\Psi\left(Q_{k}\right) \left(\log \Psi\left(Q_{k}\right)\right)^{1-\sqrt{C}}}{\sqrt{C}}+K,
\end{align}
where the last line follows as $Q >k_{C,\,\delta}$ and from Lemma \ref{g bound 2}.

We note that
\begin{align*}
    \frac{\Psi\left(Q_{k+1}\right)}{\left(\log \Psi\left(Q_{k+1}\right)\right)^{{C}/{4}}}
    &\leq \frac{e^{\left(k+1\right)^{\frac{1}{\sqrt{C}}}}+K}{\left(\log \Psi\left(Q_{k}\right)\right)^{{C}/{4}}} \\
    &\leq \frac{e \cdot e^{k^{\frac{1}{\sqrt{C}}}}+K}{\left(\log \Psi\left(Q_{k}\right)\right)^{{C}/{4}}} \\
    &\leq \frac{e  \Psi\left(Q_{k}\right)+K}{\left(\log \Psi\left(Q_{k}\right)\right)^{{C}/{4}}}.
\end{align*}

Substituting this into \eqref{triangle ineq 2}, we obtain that
\begin{align*}
    \left|\sum_{q=1}^{Q}\left(f_{q}(x)-f_{q}\right)\right| &\leq \frac{e  \Psi\left(Q_{k}\right)+K}{\left(\log \Psi\left(Q_{k}\right)\right)^{{C}/{4}}} + \frac{e\Psi\left(Q_{k}\right) \left(\log \Psi\left(Q_{k}\right)\right)^{1-\sqrt{C}}}{\sqrt{C}}+K \\
    &\leq  \frac{e  \Psi\left(Q_{k}\right)+K}{\left(\log \Psi\left(Q_{k}\right)\right)^{\sqrt{C}-1}} + \frac{e\Psi\left(Q_{k}\right) }{\left(\log \Psi\left(Q_{k}\right)\right)^{\sqrt{C}-1}}+K \\
    &\leq 2\frac{e\Psi\left(Q_{k}\right) +K }{\left(\log \Psi\left(Q_{k}\right)\right)^{\sqrt{C}-1}}+K \nonumber \\
    &\leq 2\frac{e\Psi\left(Q\right) +K }{\left(\log \Psi\left(Q\right)\right)^{\sqrt{C}-1}}+K
\end{align*}

As $C$ can be chosen arbitrarily large, this means that
\begin{equation*}
    \left|\sum_{q=1}^{Q}\left(f_{q}(x)-f_{q}\right)\right| \leq 2\frac{e\Psi\left(Q\right) +K }{\left(\log \Psi\left(Q\right)\right)^{C}}+K;
\end{equation*}
that is,
\begin{equation}\label{effective A 1}
    \sum_{q=1}^{Q} f_{q}(x) \leq \sum_{q=1}^{Q}f_{q} + 2\frac{e\Psi\left(Q\right) +K }{\left(\log \Psi\left(Q\right)\right)^{C}}+K.
\end{equation}

This inequality holds for $Q>k_{C,\,\delta}$. We now consider when $Q\leq k_{C,\,\delta}.$ As $f_{q}(x)$ and $f_{q}$ are bounded above by $K$ for all $q$, we trivially have that
\begin{equation}\label{effective A 2}
-k_{C,\,\delta}K\leq-QK \leq \sum_{q=1}^{Q}\left(f_{q}(x)-f_{q}\right)\leq  QK \leq k_{C,\,\delta}K.    
\end{equation}

Combining \eqref{effective A 1} and \eqref{effective A 2}, we see that for all $Q \in \mathbb{N}$, we have that
\begin{equation*}
    \left|\sum_{q=1}^{Q} f_{q}(x) - \sum_{q=1}^{Q}f_{q}\right|\leq\max\left\{ k_{C,\,\delta}K,\,2\frac{e\Psi\left(Q\right) +K }{\left(\log \Psi\left(Q\right)\right)^{C}}+K\right\}.
\end{equation*}
\end{proof}

\bibliographystyle{siam}
\bibliography{name}

\section{Appendix}

\subsection{Proofs of Lemmas \ref{E.Lemma5} to \ref{E.Lemma8}}

In this section we will give the proofs of results from Section 3.2 which are omitted there. For the sake of simplicity, each of the proofs is written as an expansion the original proof in \cite{Pollington2022}; we refer heavily to the original paper throughout.
 
Before beginning the proofs, we make a slight remark that, as commented in \cite{Pollington2022}, the equality part for identity (3.2.5 in \cite{harman1998metric}) does not always hold. In general, we have the following result instead:
\begin{lemma}
    Let $f:\mathbb{N}\to[0,1/2]$ be a function, $(a_j)_j$, $(b_j)_j$ be sequences of integers and \begin{align*}
        \mathcal{E}_j=\{x\in[0,1]:\|a_jx+b_j\|<f(j)\}.
    \end{align*}
    Then for any $j,k\in\mathbb{N}$, the Lebesgue measure $\lambda$ of $\mathcal{E}_j\cap \mathcal{E}_k$ is upper bounded by
    \begin{align*}
        \lambda(\mathcal{E}_j\cap \mathcal{E}_k)
        \leq 4f(j)f(k)+2\gcd{(a_j,a_k)}\min{\left(\frac{f(j)}{a_j},\frac{f(k)}{a_k}\right)}.
    \end{align*}
\end{lemma}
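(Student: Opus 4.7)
The plan is to decompose $\mathcal{E}_j$ and $\mathcal{E}_k$ as disjoint unions of equally-spaced arcs on the torus $\mathbb{T} = \mathbb{R}/\mathbb{Z}$ and to compute $\lambda(\mathcal{E}_j \cap \mathcal{E}_k)$ by organising the pairwise arc intersections according to an arithmetic ``offset''. After a trivial reduction to the case $a_j, a_k \geq 1$ (if $a_j = 0$ then $b_j \in \mathbb{Z}$ forces $\mathcal{E}_j = \mathbb{T}$ and the bound holds at once), I would write $\mathcal{E}_j = \bigsqcup_{m=0}^{a_j - 1} I_m$ with each $I_m$ an arc of half-length $r_j \coloneqq f(j)/a_j$ centred at $(m - b_j)/a_j \bmod 1$, and similarly $\mathcal{E}_k = \bigsqcup_{n=0}^{a_k - 1} I'_n$ of half-length $r_k \coloneqq f(k)/a_k$. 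Since $f \leq 1/2$ the arcs inside each set are pairwise disjoint, so $\lambda(\mathcal{E}_j \cap \mathcal{E}_k) = \sum_{m,n} \lambda(I_m \cap I'_n)$.

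Next I would exploit the group structure behind the centre differences: the circular displacement between two centres equals $(a_k m - a_j n - B)/(a_j a_k) \bmod 1$, where $B = a_k b_j - a_j b_k \in d\mathbb{Z}$ and $d \coloneqq \gcd(a_j, a_k)$. A standard gcd argument shows that the homomorphism $\mathbb{Z}/a_j \oplus \mathbb{Z}/a_k \to \mathbb{Z}/a_j a_k$ sending $(m,n) \mapsto a_k m - a_j n$ has image $d\mathbb{Z}/a_j a_k \mathbb{Z}$ and kernel of order $d$; hence each coset $\ell \in d\mathbb{Z}/a_j a_k \mathbb{Z}$ is attained by exactly $d$ pairs $(m,n)$, and $\lambda(I_m \cap I'_n)$ depends only on $s \coloneqq \|\ell\|_{a_j a_k} \in [0, a_j a_k/2]$. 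Denoting this common intersection length by $I(s)$, one obtains
\[
\lambda(\mathcal{E}_j \cap \mathcal{E}_k) \;=\; d \sum_{\ell \in d\mathbb{Z}/a_j a_k \mathbb{Z}} I(\|\ell\|_{a_j a_k}).
\]

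The last step is to extract the bound from two properties of the function $I$ on $[0, a_j a_k/2]$: the pointwise estimate $I(0) \leq 2 \min(r_j, r_k)$ (the same-centre, nested case) and the integral identity $\int_0^{a_j a_k/2} I(s)\,\mathrm{d}s = 2 r_j r_k a_j a_k$. Separating the $\ell = 0$ term and pairing each non-zero $\ell$ with $-\ell \bmod a_j a_k$ (picking up a factor $2$, since both yield the same $\|\ell\|$), the elementary Riemann-sum bound $\sum_{k \geq 1} I(kd) \leq d^{-1} \int_0^{a_j a_k/2} I(s)\,\mathrm{d}s$ for non-increasing $I$ gives
\[
\lambda(\mathcal{E}_j \cap \mathcal{E}_k) \;\leq\; 2d \min(r_j, r_k) + 4 r_j r_k a_j a_k,
\]
which is precisely the claimed inequality after substituting $r_j = f(j)/a_j$, $r_k = f(k)/a_k$ and $d = \gcd(a_j, a_k)$. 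The subtlest point is verifying the integral identity when $r_j + r_k > 1/2$: in that regime two arcs can wrap around $\mathbb{T}$ and intersect in two pieces, so $I$ is not given everywhere by the naive short-arc formula $\max(0, \min(2r_j, 2r_k, r_j + r_k - s/(a_j a_k)))$, but rather takes the constant value $2r_j + 2r_k - 1$ on a terminal plateau $((1 - r_j - r_k) a_j a_k, a_j a_k/2]$. A direct calculation shows that the extra area contributed by this plateau exactly compensates the ``missing'' triangular tail beyond $s = a_j a_k/2$ that the naive formula would produce, so both the monotonicity of $I$ and the clean integral identity persist, and the argument goes through unchanged.
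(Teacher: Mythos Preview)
Your argument is correct, but it takes a different route from the paper's. The paper proceeds by a scaling/lattice-point count: it sets $\mathcal{F}_i=\{x:\|x+b_i\|<f(i)\}$ (a single arc), writes $c_i=a_i/\gcd(a_j,a_k)$, and then invokes Lemma~3.1 of Harman's \emph{Metric Number Theory} to obtain $\lambda(\mathcal{E}_j\cap\mathcal{E}_k)\le (c_jc_k)^{-1}\lambda(c_k\mathcal{F}_j)\bigl(\lambda(c_j\mathcal{F}_k)+1\bigr)$; the ``$+1$'' from counting integer points in the dilated interval is what produces the $2\gcd(a_j,a_k)\min(f(j)/a_j,f(k)/a_k)$ term, and symmetry in $j,k$ gives the $\min$. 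Your approach instead unpacks the intersection as a sum over arc-pairs, uses the group structure of the map $(m,n)\mapsto a_km-a_jn\pmod{a_ja_k}$ to see that each offset in $d\mathbb{Z}/a_ja_k\mathbb{Z}$ occurs exactly $d$ times, isolates the $\ell=0$ term, and bounds the rest by the Riemann-sum inequality against $\int I(s)\,\mathrm{d}s=2r_jr_ka_ja_k$. Both arguments ultimately rest on the same arithmetic fact about the image and kernel of that homomorphism, but the paper packages it into a black-box lemma while you carry it out by hand; your version is longer but fully self-contained and makes the geometric origin of each of the two terms transparent.
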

\begin{proof}
    Let $\mathcal{F}_i=\{x\in[0,1]:\|x+b_i\|<f(i)\}$, $c_j=a_j/\gcd{(a_j,a_k)}$ and $c_k=a_k/\gcd{(a_j,a_k)}$. Since $\mathcal{F}_k$ is an open interval, for any $y\in\mathbb{R}$, the number of integers in the translation $c_j\mathcal{F}_k-y$ is at most 
    \begin{align*}
        \lambda(c_j\mathcal{F}_k)+1
        \leq c_j\lambda(\mathcal{F}_k)+1
        \leq 2c_jf(k)+1.
    \end{align*}
    By Lemma 3.1 from \cite{harman1998metric} and the argument contained there, 
    \begin{align*}
        \lambda(\mathcal{E}_j\cap \mathcal{E}_k)
        &\leq\frac{1}{c_jc_k}\lambda(c_j\mathcal{F}_k)\left(\lambda(c_k\mathcal{F}_j)+1\right)
        =\frac{1}{c_jc_k}(2c_kf(j))(2c_jf(k)+1) \\
        &=4f(j)f(k)+2\gcd{(a_j,a_k)}\frac{f(j)}{a_j},
    \end{align*}
    and the result follows by interchanging the indices $j$ and $k$ for the last inequality.
\end{proof}
Hence, (3.2.5 in \cite{harman1998metric}) should be applied in this context by setting $\mathcal{E}_j=E_{q_j}^\gamma$, $a_j=q_j$, $f(j)=\psi(q_j)$, correcting the equals sign to $\leq$, and multiplying the second term by a factor of 2 to obtain
\begin{align*}
    \lambda(E_{q_m}^\gamma\cap E_{q_n}^\gamma)\leq 4\psi(q_m)\psi(q_n)+2\gcd{(q_m,q_n)}\min{\left(\frac{\psi(q_m)}{q_m},\frac{\psi(q_n)}{q_n}\right)},
\end{align*}
Thus, by (105 in \cite{Pollington2022}), we can modify (88 in \cite{Pollington2022}) to see that
\begin{align}
    \hat{W}^+_{m,n}(0)
    &\leq 4(1+\varepsilon_m)(1+\varepsilon_n)\psi(q_m)\psi(q_n)+2(1+\varepsilon_m)(1+\varepsilon_n)\gcd{(q_m,q_n)}\min{\left(\frac{\psi(q_m)}{q_m},\frac{\psi(q_n)}{q_n}\right)} \nonumber\\
    \label{eq88+}
    &\leq4\psi(q_m)\psi(q_n)+12\varepsilon_m\psi(q_m)\psi(q_n)+8\gcd{(q_m,q_n)}\min{\left(\frac{\psi(q_m)}{q_m},\frac{\psi(q_n)}{q_n}\right)},
\end{align}
as the sequence $(\varepsilon_n)_{n\in\mathbb{N}}$ is bounded above by 1.

We are now in a position to prove the lemmas \ref{E.Lemma5} to \ref{E.Lemma8}.

\begin{proof}[Proof of Lemma \ref{E.Lemma5}]
Pick any sequence $(\varepsilon_n)_{n\in\mathbb{N}}$ of real numbers in $(0,1]$. By our assumptions, following the original proof in \cite{Pollington2022}, we see that for any $a,b\in\mathbb{N}$, if $a<b$ then \begin{align}
    \label{eq69'}
    P(a,b)\coloneqq \left|\sum_{n=a}^b\mu(E_{q_n}^\gamma)-2\sum_{n=a}^b\psi(q_n)\right|\leq\sum_{n=a}^b\psi(q_n)\varepsilon_n+\sum_{n=a}^b\frac{3}{n^{A/B}{\varepsilon_n}^{1/2}}
. \end{align}

Recall that, by definition, for any $n\in\mathbb{N}$, $\Psi(n)=\sum_{k=1}^n\psi(n)$. For any $n\in\mathbb{N}$, let
\begin{align*}
    \varepsilon_n=\min{\left(1,\left(\Psi(n)\right)^{-2}\right)}.
\end{align*}
From \eqref{eq69'} and Lemma D4 from \cite{Pollington2022}, following the argument of the original proof, we obtain that 
\begin{align*}
    P(a,b)\leq 3+3\sum_{n=1}^\infty\frac{1}{n^{A/B-1}}=3+3\zeta(A/B-1).
\end{align*}

To finish the proof, we need to establish the ``other" upper bound. Set $\tau=A/B$ in \eqref{extra con L5}, and let $\varepsilon_n=2^{-2/3}\in(0,1)$. Then \eqref{eq69'} allows us to deduce that
\begin{align*}
    P(a,b)
    \leq\sum_{n=a}^b\left(\varepsilon_n+\frac{1}{n^{A/(2B)}{\varepsilon_n}^{1/2}}\right)\psi(q_n)
    \leq\sum_{n=a}^b\left(\varepsilon_n+\frac{1}{{\varepsilon_n}^{1/2}}\right)\psi(q_n)
    =\frac{3}{2^{2/3}}\sum_{n=a}^b\psi(q_n).
 \end{align*}
We note there is no factor of 3 in the above, as the $3$ on the right hand side of \eqref{eq69'} is cancelled by \eqref{extra con L5}.

The upper bound is optimal in this method by the choice of $(\varepsilon_n)_{n\in\mathbb{N}}$; the inequalities are true for any choice of $(\varepsilon_n)_{n\in \mathbb{N}}$, and the function $f_1:\mathbb{R}^+\to\mathbb{R}^+$ defined for $x>0$ by
\begin{align*}
    f_1(x)=x+\frac{1}{x^{1/2}}
, \end{align*}
has its global minimum of $3/2^{2/3}$ at $x=\varepsilon_n=2^{-2/3}$.
\end{proof}

\begin{proof}[Proof of Lemma \ref{E.Lemma6}] This proof is done by splitting $S(m,\,n)$ into various parts. Following from the original proof, it suffices to improve some estimates.
    
    By \eqref{eq8} and \eqref{eq32}, for any $n\in\mathbb{N}$ and $t\in\mathbb{Z}\setminus\{0\}$,
    \begin{align*}
        \left|\hat{\mu}(-tq_n)\right|\leq \nu C^{-A}n^{-A/B}.
    \end{align*}
    
    By (47 in \cite{Pollington2022}) and (53 in \cite{Pollington2022}),
    \begin{align*}
        |S_1(m,n)|&\leq 9\nu C^{-A} \frac{\psi(q_m)}{n^{A/B}{\varepsilon_n}^{1/2}} \\
        |S_2(m,n)|&\leq 9\nu C^{-A} \frac{\psi(q_n)}{m^{A/B}{\varepsilon_m}^{1/2}},
    \end{align*}
    where $ |S_1(m,n)|$ and $|S_2(m,n)|$ are given at the beginning of the proof of Lemma 6 from \cite{Pollington2022}.
    
    We now find an explicit bound for $S_{4}(m,\,n)$. Notice that, given the restriction $|sq_m-tq_n|\geq q_n/2$ imposed on $s,\,t \in \mathbb{Z} \setminus \{0\}$, by \eqref{eq32} and \eqref{eq8} we have that 
    \begin{align*}
        \left|\hat{\mu}(sq_m-tq_n)\right|
        &\leq \frac{\nu}{\log^A{|sq_m-tq_n|}}
        \leq \frac{\nu}{\log^A{(q_n/2)}} \\
        &\leq \frac{2^A\nu}{\log^A{q_n}}
        \leq2^A\nu C^{-A} n^{-A/B}
    ,\end{align*}
    where the second line follows because for any $n\in\mathbb{N}$, $q_n\geq4$, and from that we deduce that
     \begin{align*}
        \frac{\log{q_n}}{\log{(q_n/2)}}\leq2.
    \end{align*}
    We can now give an explicit upper bound for $S_4$ as definied in the original proof. We find that
    \begin{align*}
        |S_4(m,n)|
        \leq 9(2)^A\nu C^{-A} \frac{1}{n^{A/B}{\varepsilon_m}^{1/2}{\varepsilon_n}^{1/2}}
    .\end{align*}
    
    To bound $S_6$ as given in the original proof, note that for ${q_n}^\alpha\leq|sq_m-tq_{m}|<q_n/2$, by \eqref{eq32} and \eqref{eq8}, we have that
    \begin{align*}
        \left|\hat{\mu}(sq_m-tq_n)\right|
        \leq \frac{\nu}{\alpha^A\log^A{q_n}}
        \leq \nu C^{-A}\frac{1}{\alpha^A} m^{-A/B}
    .\end{align*}
    It follows from the argument of the original proof that
    \begin{align*}
        |S_6(m,n)|
        \leq 9\nu C^{-A}\frac{1}{\alpha^A}\frac{\psi(q_n)}{m^{A/B}{\varepsilon_m}^{1/2}}
    .\end{align*}
    
    Hence the result follows by adding these estimates together, as in the original proof.
\end{proof}

\begin{proof}[Proof of Lemma \ref{E.Lemma7}] 
Set
\begin{align*}
    D=D(m,n)
    =\frac{q_m}{q_n\psi(q_m){\varepsilon_m}^{1/2}}.
\end{align*}
Notice that, from the original proof and that as $\varepsilon_n\leq1$, using (51 in \cite{Pollington2022}) and (52 in \cite{Pollington2022}), we have that
\begin{align*}
    |T(m,n)|
    &\leq\sum_{t\in\mathbb{N}}\min{\left(\frac{4}{\pi^2}\frac{q_m^2}{q_n^2}\frac{1}{t^2\psi(q_m)\varepsilon_m},(2+\varepsilon_m)\psi(q_m)\right)}(2+\varepsilon_n)\psi(q_n) \\
    &\leq 9\sum_{t=1}^{\lfloor{D}\rfloor}\left(\psi(q_m)\psi(q_n)\right) +\frac{4}{\pi^2}\frac{{q_m}^2}{{q_n}^2}\frac{\psi(q_n)}{\psi(q_m)\varepsilon_m}\sum_{t=\lfloor{D}\rfloor+1}^{+\infty}\frac{1}{t^2}\\
    &\leq11\frac{q_m}{q_n}\frac{\psi(q_n)}{{\varepsilon_m}^{1/2}}\leq11\frac{q_m}{q_n}\frac{\psi(q_n)}{{\varepsilon_n}^{1/2}}.
\end{align*}
This follows as $(\varepsilon_n)_{n\in\mathbb{N}}$ is decreasing, and by applying the following  estimate:
\begin{align*}
    \sum_{t=\lfloor{D}\rfloor+1}^{+\infty}\frac{1}{t^2}
    \leq\frac{\pi^2}{6}\int_{D}^\infty\frac{dt}{t^2}=\frac{\pi^2}{6D}=\frac{\pi^2}{6}\frac{q_n\psi(q_m){\varepsilon_m}^{1/2}}{q_m},
\end{align*}

Thus, by \eqref{eq17}, we find that $q_m\leq{K_0}^{m-n}q_n$. By the formula for geometric sums, we find that 
\begin{align*}
    \sum_{m=1}^{n-1}\frac{q_m}{q_n}
    \leq\sum_{m=1}^{n-1}\frac{{K_0}^{m-n}q_n}{q_n}
    <   \sum_{i=1}^\infty{K_0}^{-i}
    =   \frac{1/K_0}{1-1/K_0}=\frac{1}{K_0-1}=K'
    .\end{align*}
\end{proof}

\begin{proof}[Proof of Lemma \ref{E.Lemma8}] Notice that, from the original proof, for any $m,n\in\mathbb{N}$, \begin{align*}
    |T(m,n)|
    &\leq \sum_{s>m^3/\psi(q_m)}\frac{1}{\pi^2s\psi(q_m)\varepsilon_m}(2+\varepsilon_n)\psi(q_n) 
    =\frac{3}{\pi^2}\frac{\psi(q_n)}{\psi(q_m)\varepsilon_m}\sum_{s>m^3/\psi(q_m)}\frac{1}{s^2} \leq\frac{1}{2}\frac{\psi(q_n)}{m^3\varepsilon_m}
    \leq\frac{\psi(q_n)}{m^2},
    \end{align*}
by reasoning akin to that in the proof of Lemma \ref{E.Lemma7}. Hence for any $a,b\in\mathbb{N}$, if $a<b$ then
    \begin{align*}
        \mathop{\sum\sum}_{a\leq m<n\leq b}|T(m,n)|
        &\leq\sum_{n=a+1}^{b}\left(\psi(q_n)\sum_{m=a}^{b-1}\frac{1}{m^2}\right)
        <\frac{\pi^2}{6}\sum_{n=a}^b\psi(q_n)
    .\end{align*}
We finish the proof by noting that $\pi^2/6<2$.
\end{proof}

Our next aim is to establish explicit estimates for Propositions 1 and 2 from \cite{Pollington2022}. Before we can do this, we first need to find explicit estimates for some inequalities. Take $\delta\in(0,1]$ and for any $n\in\mathbb{N}$,\begin{align*}
    \varepsilon_n\coloneqq\min\left\{\frac{1}{2^\delta},\frac{1}{(\sum_{k=a}^n\psi(q_k))^\delta}\right\},
\end{align*}
notice that 
\begin{equation}\label{en bound}
  {\varepsilon_n}^{-1}\leq\max\left\{2^\delta,n^\delta\right\}<2n.  
\end{equation}

In the case that $\sum_{k=a}^b\psi(q_k)>2$, inequality (102 in \cite{Pollington2022}) can be modified to \begin{align*}
    \mathop{\sum\sum}_{a\leq m<n\leq b}\frac{1}{n^{A/B}{\varepsilon_m}^{1/2}{\varepsilon_n}^{1/2}}\leq \zeta{\left(\frac{A}{B}\right)}\sum_{k=a}^b\psi(q_k)
.\end{align*}
For the case of $\sum_{k=a}^b\psi(q_k)\leq2$, the inequality (103 in \cite{Pollington2022}) can be modified into \begin{align*}
    \mathop{\sum\sum}_{a\leq m<n\leq b}\frac{1}{n^{A/B}{\varepsilon_m}^{1/2}{\varepsilon_n}^{1/2}}\leq2\zeta{\left(\frac{A}{2B}\right)}\sum_{k=a}^b\psi(q_k)
.\end{align*}

Since the Riemann Zeta function is decreasing on $\mathbb{R}^+$, and 
\begin{align*}
    \zeta{\left(\frac{A}{B}\right)}<\zeta{\left(\frac{A}{2B}\right)}<2\zeta{\left(\frac{A}{2B}\right)},
\end{align*}
in both cases we have that 
\begin{align}
    \label{both}
    \mathop{\sum\sum}_{a\leq m<n\leq b}\frac{1}{n^{A/B}{\varepsilon_m}^{1/2}{\varepsilon_n}^{1/2}}\leq2\zeta{\left(\frac{A}{2B}\right)}\sum_{k=a}^b\psi(q_k)
.\end{align}

By \eqref{en bound}, we have that
\begin{align*}
    \sum_{n=1}^\infty\frac{1}{n^{A/B}{\varepsilon_n}^{1/2}}<\sum_{n=1}^\infty\frac{\sqrt{2n}}{n^{A/B}}=\sqrt{2}\zeta{\left(\frac{A}{B}-\frac{1}{2}\right)}.
\end{align*}
Thus we can make the inequality before (104 in \cite{Pollington2022}) explicit by applying Lemma \ref{E.Lemma6} and \eqref{both} to obtain \begin{align*}
    \mathop{\sum\sum}_{a\leq m<n\leq b}|S(m,n)|\leq 9\nu C^{-A} \left(\sqrt{2}\zeta{\left(\frac{A}{B}-\frac{1}{2}\right)}\left(2+\alpha^{-A}\right)+2^{A+1}\zeta{\left(\frac{A}{2B}\right)}\right)\sum_{n=a}^b\psi(q_n)
    +\mathop{\sum\sum}_{a\leq m<n\leq b}|T(m,n)|.
\end{align*}

We now note that
\begin{align*}
    \left(\frac{3}{2}+\frac{\log{x}}{2\log{(3/2)}}\right)
    \leq\left(\frac{3}{2\log{2}}+\frac{1}{2\log{(3/2)}}\right)\log{x}
    \leq 4\log{x}.
\end{align*}
Thus, we make inequality (108 in \cite{Pollington2022}) explicit, showing that for any $\delta>0$ and $a,b\in\mathbb{N}$,
\begin{align*}
    \mathop{\sum\sum}_{a\leq m<n\leq b}\varepsilon_m\psi(q_m)\psi(q_n)\leq 4\left(\sum_{n=a}^b\psi(q_n)\right)^{2-\delta}\log{\left(\sum_{n=a}^b\psi(q_n)\right)}.
\end{align*}

Before proceeding, we remark there is a typo on (109 in \cite{Pollington2022}); the correct version should state:
\begin{align}\label{thing to bound}
    \mathop{\sum\sum}_{a\leq m<n\leq b}W^+_{m,n}(0)
    &\leq 2\left(\sum_{n=a}^b\psi(q_n)\right)^2
    +O\left(\sum_{n=a}^b\psi(q_n)\right)^{2-\delta}\log{\left(\sum_{n=a}^b\psi(q_n)\right)} \\
    &\qquad+O\left(\mathop{\sum\sum}_{a\leq m<n\leq b}\gcd{(q_m,q_n)}\min{\left(\frac{\psi(q_m)}{q_m},\frac{\psi(q_n)}{q_n}\right)}\right);
\end{align}
the coefficient of the main term in \cite{Pollington2022} is 4 instead of 2. Hence, by \eqref{eq88+} and bounds given above, \eqref{thing to bound} is explicitly given by 
\begin{align*}
    \mathop{\sum\sum}_{a\leq m<n\leq b}W^+_{m,n}(0)
    &\leq 2\left(\sum_{n=a}^b\psi(q_n)\right)^2+24\left(\sum_{n=a}^b\psi(q_n)\right)^{2-\delta}\log{\left(\sum_{n=a}^b\psi(q_n)\right)} \\
    &\qquad+4\mathop{\sum\sum}_{a\leq m<n\leq b}\gcd{(q_m,q_n)}\min{\left(\frac{\psi(q_m)}{q_m},\frac{\psi(q_n)}{q_n}\right)}.
\end{align*}

We are now in a position to make Propositions 1 and 2 from \cite{Pollington2022} effective. By applying Lemma \ref{E.Lemma7} with $\alpha=1/2$ for the formula for $T$, the implicit constants of inequality (110 in \cite{Pollington2022}) are given by
\begin{align*}
    \mathop{\sum\sum}_{a\leq m<n\leq b}|T(m,n)|\leq 11K'\sum_{n=a}^b\frac{\psi(q_n)}{{\varepsilon_n}^{1/2}}\leq \frac{11K'}{\varepsilon_b^{1/2}}\sum_{n=a}^b\psi(q_n)\leq11K'\left(\sum_{n=a}^b\psi(q_n)\right)^{1+\delta/2},
\end{align*}
where $K'=1/(K_0-1)$, and as the sequence $(q_n)_{n\in\mathbb{N}}$ is lacunary,  inequality (111 in \cite{Pollington2022}) becomes 
\begin{align*}
    \mathop{\sum\sum}_{a\leq m<n\leq b}\gcd{(q_m,q_n)}\min{\left(\frac{\psi(q_m)}{q_m},\frac{\psi(q_n)}{q_n}\right)}\leq K'\sum_{n=a}^b\psi(q_n).
\end{align*}

We are now able to give explicit estimates for the inequalities on page 8620 in \cite{Pollington2022}. If the assumptions of Proposition 1 from \cite{Pollington2022} are satisfied, as inequality (104 in \cite{Pollington2022}) has been made effective, we see that 
\begin{align}
    \mathop{\sum\sum}_{a\leq m<n\leq b}\mu(E_m^\gamma\cap E_n^\gamma)\leq&
    2\left(\sum_{n=a}^b\psi(q_n)\right)^2+24\left(\sum_{n=a}^b\psi(q_n)\right)^{2-\delta}\log^+{\left(\sum_{n=a}^b\psi(q_n)\right)}+11K'\left(\sum_{n=a}^b\psi(q_n)\right)^{1+\delta/2} \nonumber\\
    \label{Pollingtion 1}
    &+\left(4K'+9\nu C^{-A} \left(\sqrt{2}\zeta{\left(\frac{A}{B}-\frac{1}{2}\right)}\left(2+\alpha^{-A}\right)+2^{A+1}\zeta{\left(\frac{A}{2B}\right)}\right)\right)\sum_{n=a}^b\psi(q_n).
\end{align}

If instead the assumptions in Proposition 2 from \cite{Pollington2022} are satisfied, we similarly have that
\begin{align}
    \mathop{\sum\sum}_{a\leq m<n\leq b}\mu(E_m^\gamma\cap E_n^\gamma)\leq&
    2\left(\sum_{n=a}^b\psi(q_n)\right)^2
    +24\left(\sum_{n=a}^b\psi(q_n)\right)^{2-\delta}\log^+{\left(\sum_{n=a}^b\psi(q_n)\right)} \nonumber\\
   & +\left(2+9\nu C^{-A} \left(\sqrt{2}\zeta{\left(\frac{A}{B}-\frac{1}{2}\right)}\left(2+\alpha^{-A}\right)+2^{A+1}\zeta{\left(\frac{A}{2B}\right)}\right)\right)\sum_{n=a}^b\psi(q_n) 
   \nonumber\\
    \label{Pollingtion 2}
    &+ 4\mathop{\sum\sum}_{a\leq m<n\leq b}\gcd{(q_m,q_n)}\min{\left(\frac{\psi(q_m)}{q_m},\frac{\psi(q_n)}{q_n}\right)}
. \end{align}

We now obtain the explicit versions of Proposition 1 and 2 from \cite{Pollington2022} as stated in Section 3.2 by applying Lemma \ref{E.Lemma5}, setting $\delta=2/3$ and $\delta=1$ to \eqref{Pollingtion 1} and \eqref{Pollingtion 2} respectively. Notice that by Lemma \ref{E.Lemma5} with the stated assumptions, \begin{align*}
    4\left(\sum_{n=a}^b\psi(q_n)\right)^2\leq\left(\sum_{n=a}^b\mu(E_{q_n}^\gamma) \right)^2+4m_1(m_2+1)\sum_{n=a}^b\psi(q_n)
, \end{align*}
where $m_1$ and $m_2$ are given in \eqref{m1m2}.

\subsection{A More General Version of Lemmas 1.4 and 1.5 from \cite{harman1998metric}}

As noted previously, \cite{aistleitner2022metric} uses a theorem very much like Lemma 1.4 from \cite{harman1998metric}, but not quite the same. We give general versions of Lemmas 1.4 and 1.5 from \cite{harman1998metric}. 

We begin by generalising Lemma 1.4 from \cite{harman1998metric}.

\begin{thm}\label{Theorem 10 Harman}
Let $(X,\,\Sigma,\, \mu)$ be a measure space and suppose that $0<\mu(X) <+\infty$. Let $f_{i}(x),\, i \in \mathbb{N}$ be a sequence on non-negative $\mu$-measurable functions, where for all $i \in \mathbb{N},\, x \in X$ we have that $f_{i}(x) <K$ for some constant $K \in \mathbb{R}$. Further, let $f_{i},\, \phi_{i} \in \mathbb{R}$ be sequences of real numbers such that for any $i \in \mathbb{N}$,
\begin{equation}
    0 \leq f_{i} \leq \phi_{i} \leq K.
\end{equation}

For any $N \in \mathbb{N}$ define
\[\Phi(N) = \sum_{i=1}^{N} \phi_{i},\]
and suppose that $\lim_{N \to \infty}\Phi(N) = + \infty$. Further, assume that for all $N \in \mathbb{N}$ we have that
\begin{equation}\label{int general 2}
    \int_{X}\left(\sum_{i=1}^{N}\left(f_{i}(x)-f_{i}\right)\right)^{2} \textrm{d}\mu = O(F(\Phi(N))),
\end{equation}
where $F:\mathbb{R} \to \mathbb{R}$ is an eventually strictly increasing function. Further, assume that $G:\mathbb{N}\to \mathbb{R}$ and $H:\mathbb{N} \to \mathbb{R}$ are eventually strictly increasing functions, and that
\begin{equation} \label{attempt 2 sum converge}
    \sum_{k=1}^{\infty} \frac{F(k)}{H(k-1)^{2}} < +\infty.
\end{equation}
We further define $I:\mathbb{N} \to \mathbb{R}$ such that
\begin{equation}\label{attempt 2 I defn}
I(k)\asymp\left(G(k+1)-G(k)\right),    
\end{equation}
and assume that $I$ is eventually strictly increasing. That is, there exist positive constants $c,\,C>0$ such that
\[c I(k) \leq G(k+1)-G(k) \leq C I(k),\]
with $I$ eventually strictly decreasing.
 Then
\begin{equation*}
    \sum_{i=1}^{N}f_{i}(x) = \sum_{i=1}^{N}f_{i}+O\left(H(G^{-1}(\Phi(N)))+I(G^{-1}(\Phi(N)))\right),
\end{equation*}
where we have assumed that $\Phi(N)$ is sufficiently large so that $G$ is strictly increasing on $(\Phi(N)-\varepsilon,\, +\infty)$ so the inverse makes sense on the restricted domain.

\end{thm}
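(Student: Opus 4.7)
My plan is to adapt the proof of Theorem~\ref{thm2} to this abstract setting, replacing the explicit choices $G(j) = j^3\log^{1+\varepsilon}(j+2)$, $H(j) = j^2\log^{1+\varepsilon}(j+2)$ and $F(t) = Kt$ by the general $G, H, F$ of the hypothesis. Define the grid points
\[N_j \coloneqq \min\{n\in\mathbb{N} : \Phi(n) > G(j)\},\]
well-defined for all sufficiently large $j$ since $\Phi(N)\to\infty$. Minimality of $N_j$ together with $\phi_i \leq K$ gives the two-sided estimate $G(j) < \Phi(N_j) \leq G(j)+K$, from which
\[\Phi(N_j) - \Phi(N_{j-1}) \leq G(j) - G(j-1) + K = O(I(j-1)) + K\]
by~\eqref{attempt 2 I defn}. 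Writing $E_N(x) \coloneqq \sum_{i=1}^N (f_i(x) - f_i)$, set $A_j \coloneqq \{x \in X : |E_{N_j}(x)| > H(j)\}$.

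Chebyshev's inequality and hypothesis~\eqref{int general 2} give
\[\mu(A_j) \leq \frac{1}{H(j)^2}\int_X E_{N_j}(x)^2\,\mathrm{d}\mu = O\!\left(\frac{F(G(j)+K)}{H(j)^2}\right),\]
using eventual monotonicity of $F$. A comparison argument between integer sums and $G$-indexed sums then converts the convergence hypothesis~\eqref{attempt 2 sum converge} into $\sum_j \mu(A_j) < \infty$, and the Borel-Cantelli lemma supplies, for $\mu$-almost every $x\in X$, an index $j_0(x)$ such that $|E_{N_j}(x)| \leq H(j)$ for every $j > j_0(x)$.

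For the interpolation step, given any large $N$, pick $j > j_0(x)$ with $N_{j-1} \leq N < N_j$. Because $f_i(x), f_i \geq 0$, the partial sums $\sum_{i=1}^N f_i(x)$ and $\sum_{i=1}^N f_i$ are monotone in $N$, and sandwiching $N$ between $N_{j-1}$ and $N_j$ yields
\[|E_N(x)| \leq \max\bigl(|E_{N_{j-1}}(x)|,\, |E_{N_j}(x)|\bigr) + \bigl(\Phi(N_j) - \Phi(N_{j-1})\bigr) = O\bigl(H(j) + I(j-1)\bigr),\]
where the $\Phi$-gap has been controlled by $f_i \leq \phi_i$ and the earlier estimate on $\Phi(N_j) - \Phi(N_{j-1})$. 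Since $G^{-1}(\Phi(N)) \in [j-1, j]$ by construction of the grid, and $H, I$ are eventually monotone, this yields $|E_N(x)| = O\bigl(H(G^{-1}(\Phi(N))) + I(G^{-1}(\Phi(N)))\bigr)$, the desired conclusion.

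The main technical obstacle lies in the comparison step. The hypothesis~\eqref{attempt 2 sum converge} is phrased as a sum over integers $k$ of $F(k)/H(k-1)^2$, whereas the Borel-Cantelli estimate naturally yields a sum over $j$ of $F(G(j)+K)/H(j)^2$. Reconciling these requires partitioning $\mathbb{N}$ into the blocks $\{k : G(j-1) < k \leq G(j)\}$ of length $\asymp I(j-1)$ and comparing contributions term by term using monotonicity of $F$ and $H$; this is the place where the precise hypotheses on $G$, $F$ and $H$ must be put to work. A secondary minor issue is the treatment of small $N$ for which $G^{-1}(\Phi(N))$ is not yet defined: these contribute at most a universal constant that can be absorbed into the $O$-notation.
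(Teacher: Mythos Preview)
Your overall strategy is the paper's: define grid points $N_k$ via $G$, apply Chebyshev to control the measure of the bad sets, invoke Borel--Cantelli using the convergence hypothesis~\eqref{attempt 2 sum converge}, and then interpolate between consecutive grid points using the $I$-bound on $\Phi(N_k)-\Phi(N_{k-1})$. The interpolation and the gap estimate are handled exactly as in the paper.

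There is one substantive difference, and it bears directly on the obstacle you flag. The paper defines the bad set with threshold $H(k-1)$, not $H(j)$: that is, $\mathcal{B}_k = \{x : |E_{N_k}(x)| \geq H(k-1)\}$. With this choice Chebyshev gives $\mu(\mathcal{B}_k) = O\bigl(F(\Phi(N_k))/H(k-1)^2\bigr)$, so the denominator already matches the hypothesis~\eqref{attempt 2 sum converge} term for term, and the paper then simply asserts $F(\Phi(N_k)) = O(F(k))$ (writing only ``as $N_k$ is defined in terms of $k$''), after which~\eqref{attempt 2 sum converge} applies verbatim. No block-comparison argument is needed or attempted.

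Your proposed block comparison, by contrast, does not go through as written: partitioning $\mathbb{N}$ into the intervals $\{k : G(j-1) < k \leq G(j)\}$ and using monotonicity produces terms with $H$ evaluated near $G(j)$, not near $j$, and nothing in the hypotheses relates $H(G(j))$ to $H(j)$. The clean route is the paper's: set the threshold to $H(k-1)$ so that only the $F$-side needs reconciling, and then either accept the paper's bare assertion $F(\Phi(N_k)) = O(F(k))$ or, more honestly, observe that the natural hypothesis for the argument is $\sum_k F(G(k))/H(k-1)^2 < \infty$ (which is what both worked examples actually satisfy).
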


We note that we are able to write the conclusion in terms of $\Phi(N)$ due to the invertibility of $G$ on the restricted range; indeed, we alternatively could write the conclusion as
\begin{equation*}
    \sum_{i=1}^{N}f_{i}(x) = \sum_{i=1}^{N}f_{i}+O\left(H(k-1)+I(k-1)\right),
\end{equation*}
where $k$ is defined so that
\[N_{k-1} \leq N < N_{k},\]
with
\[ N_{k}= \min \left\{N\in\mathbb{N}\,:\, \Phi(N) \geq G(k)\right\}.\]

In some cases, for example Lemma 1.4 from \cite{harman1998metric}, it is easier to argue for asymptotics in terms of $\Phi(N)$ from this result in terms of $k$; see the examples given below.

We now show the theorem above implies some results we have already seen. We first consider Lemma 1.4 from \cite{harman1998metric}. We define the functions as follows:
\begin{align*}
    F\Phi((N))&=\sum_{k=1}^{N}\phi_{k}=\Phi(N),\\
    G(k)&=k^{3}(\log 2k)^{1+\varepsilon}, \\
    H(k)&=k^{2}(\log 2k)^{1+\varepsilon}, \\
    I(k)&= k^{2}(\log 2k)^{1+\varepsilon}.
\end{align*}
We can check these functions satisfy the conditions in the theorem. We thus find that
\[\sum_{i=1}^{N}f_{i}(x) = \sum_{i=1}^{N}f_{i}+O\left(k^{2}(\log 2k)^{1+\varepsilon}\right),\]
where $k$ is defined as above so that $N_{k-1}\leq N <N_{k}$. Upon noting that
\[k^{2}(\log 2k)^{1+\varepsilon}=O\left(\Phi(N_{k-1})^{2/3}\log \left(\Phi(N)+2\right)\right)\]
(this is shown explicitly in Lemma \ref{js in terms of Phi}, or follows from the definitions of $G_{k}$m and $N_{K}$), we attain the result given.

We now consider the case of Lemma \ref{Aist general thm}. Define the functions as follows:
\begin{align*}
    F(\Phi(N))&=\frac{\sum_{k=1}^{N}\phi_{k}}{\left(\log \sum_{k=1}^{N} \phi_{k}\right)^{C}}=\frac{\Phi(N)}{\left(\log \Phi(N)\right)^{C}}, \\
    G(k)&=e^{k^{1/\sqrt{C}}}, \\
    H(k)&=\frac{\Phi(N_{k})}{\left(\Phi(N_{k})\right)^{C/4}}, \\
    I(k) &= \left(\log\Phi(N_{k})\right)^{-\sqrt{C}+1}.
\end{align*}
We note that $N_{k}$ is defined to be the smallest $N$ such that $\Phi(N)\geq G(k)$ and thus $H$ and $I$ are functions of $k$. Following the reasoning given in the proof of Lemma \ref{Aist general thm} then gives the result.

\begin{proof}[Proof of Theorem \ref{Theorem 10 Harman}]
Define 
\begin{equation}\label{attemt 2 Nk defn}
    N_{k}= \min \left\{N\,:\, \Phi(N) \geq G(k)\right\}.
\end{equation}
Further, let 
\[\mathcal{B}_{k}= \left\{x \in X\,:\,\left|\sum_{i=1}^{N_{k}}\left(f_{i}(x)-f_{i}\right)\right|\geq H(k-1)\right\}. \]
We find that by \eqref{int general 2},
\begin{align}
  \mu(\mathcal{B}_{k})\left(H(k-1)\right)^{2} & \leq \int_{\mathcal{B}_{k}}\left(\sum_{i=1}^{N_{k}}\left(f_{i}(x)-f_{i}\right)\right)^{2}\textrm{d}\mu \nonumber \\
  &\leq \int_{X}\left(\sum_{i=1}^{N_{k}}\left(f_{i}(x)-f_{i}\right)\right)^{2}\textrm{d}\mu \nonumber \\
  &=O\left(F(\Phi(N_{k}))\right)=O\left(F(k)\right)
\end{align}
as $N_{k}$ is defined in terms of $k$. It immediately follows that
\[\mu(\mathcal{B}_{k}) \leq O\left(\frac{F(k)}{H(k-1)^{2}}\right).\]

By \eqref{attempt 2 sum converge} we have that
\[\sum_{k=1}^{\infty} \frac{F(k)}{H(k-1)^{2}}\]
converges, so it follows that
\[\sum_{k=1}^{\infty}\mu(\mathcal{B}_{k}) \leq O\left(\sum_{k=1}^{\infty} \frac{F(k)}{H(k-1)^{2}}\right)\]
converges. Applying the Borel-Cantelli Lemma we see that almost all $x \in X$ belong to at most finitely many $\mathcal{B}_{k}$; that is, for almost all $x\in X$, there exists a $k(x)$ such that for all $k >k(x)$, we have that
\begin{equation}\label{attempt 2 H bound}
 \left|\sum_{i=1}^{N_{k}}\left(f_{i}(x)-f_{i}\right)\right|\leq H(k-1).   
\end{equation}

We now need to bound the value of $\Phi(N_{k})-\Phi(N_{k-1}).$ By \eqref{attemt 2 Nk defn} we note that
\begin{align}
    \Phi(N_{k})-\Phi(N_{k-1}) &=\Phi(N_{k}-1)+\phi_{N_{k}}-\Phi(N_{k-1}) \nonumber \\
    &\leq O\left(G(k)+K-G(k-1)\right)\nonumber \\
    &\leq O\left(I(k-1)\right)
\end{align}
where the last line follows from \eqref{attempt 2 I defn}.

It now follows that, for $x \in X$ such that $x\not\in\bigcup_k\mathcal{B}_k$ isn't in the exceptional set, for sufficiently large $N$ such that $N_{k-1} \leq N \leq N_{k}$ where $k-1>k(x)$ we have that
\begin{align}\label{general in k}
    \left|\sum_{i=1}^{N}\left(f_{i}(x)-f_{i}\right)\right| &\leq\left|\sum_{i=1}^{N_{k}}f_{i}(x)-\sum_{i=1}^{N_{k-1}}f_{i}\right| \nonumber \\
    &=\left|\sum_{i=1}^{N_{k}}f_{i}(x)-\Phi(k)+\Phi(k)-\Phi(k-1)\right| \nonumber \\
    &\leq \left|\sum_{i=1}^{N_{k}}f_{i}(x)-\Phi(k)\right|+\left|\Phi(k)-\Phi(k-1)\right| \nonumber \\
    &\leq O\left(H(k-1)+I(k-1)\right).
\end{align}
We note that this is the result given in the remark above. We now take advantage of the invertibility of $G$ on the restricted domain.

By definition,  $\Phi(N_{k-1}) \geq G(k-1)$. We recall that we have
\[\Phi(N_{k-1})  \leq \Phi(N) \leq \Phi(N_{k}),\]
as $N_{k-1} \leq N \leq N_{k}$. Assume $k$ is large enough that $G$ is strictly increasing, so the inverse of $G$ exists on the restricted domain we are considering. Then, as $G$ is strictly increasing, so is $G^{-1}$. Thus
\[G^{-1}\left(\Phi(N)\right) \geq G^{-1}\left(\Phi(N_{k-1})\right) \geq G^{-1}\left(G(k-1)\right)=k-1.\]
Now, as $H$ and $I$ are also eventually strictly increasing, assuming $k$ is large enough we can substitute the above into \eqref{general in k} to find that
\[ \left|\sum_{i=1}^{N}\left(f_{i}(x)-f_{i}\right)\right| \leq O\left(H(G^{-1}(\Phi(N)))+I(G^{-1}(\Phi(N)))\right).\]

\end{proof}

We now do the same for Lemma 1.5 from \cite{harman1998metric}.

\begin{thm}\label{theorem 11 harman}
    Let $X$ be a measure space with measure $\mu$ such that $0<\mu(X)<\infty$. Let $f_{k}(x),\,k=1,\,2,\,\dots$ be a sequence of non-negative, $\mu$-measurable functions, and let $f_{k},\,\varphi_{k}$ be sequences of real numbers such that
    \begin{equation}\label{general fkphik}
        0 \leq f_{k}\leq \varphi_{k}.
    \end{equation}
    Write 
    \[\Phi(N)=\sum_{k=1}^{N}\varphi_{k},\]
    and assume that $\Phi(N) \rightarrow \infty$ as $N \rightarrow \infty$.

    Suppose that for arbitrary integers $m,\,n,\, 1 \leq m <n$ we have that 
    \begin{equation}\label{integral}
        \int_{X}\left(\sum_{m\leq k <n}\left(f_{k}(x)-f_{k}\right)\right)^{2}\textrm{d}\mu =O\left(\Tilde{F}\left(\sum_{m \leq k <n}\varphi_{k}\right)\right),
    \end{equation}
    where $\Tilde{F}:\mathbb{R}\rightarrow\mathbb{R}^{+}$ is an increasing function. Further, let $\Tilde{G},\,\Tilde{H}$ and $\Tilde{I}:\mathbb{N}\rightarrow \mathbb{R}^{+}$ be increasing functions such that
    the number 
    \[ n_{j}=\max \left\{n:\,\Tilde{F}(\Phi(n))=\Tilde{F}\left(\sum_{k=1}^{n} \varphi_{k}\right) < \Tilde{G}(j),\, j \in \mathbb{N}\right\},\]
    is well defined, 
    \[\sum_{r=1}^{\infty}\frac{\Tilde{G}\left(2^{r}\right)}{r^{1+\varepsilon}\Tilde{I}(2^{r})\Tilde{H}(r)}\]
    converges, and that
    \[\Tilde{F}(\Phi(n_{j}) = O \left(\Tilde{F}(\Phi(n_{j+1})\right).\]
    Then

    \begin{align}\label{general 1.5 ineq}
        \sum_{k=1}^{N}f_{k}(x)=\sum_{k=1}^{N}f_{k}+O\left(AB+\max_{1 \leq k \leq N}f_{k}\right),
    \end{align}
        where 
        \begin{align*}
         A&=\left(\log \left( \Tilde{G}^{-1}\left(\Tilde{F}\left(\Phi\left(N\right)\right)\right)\right)+2\right)^{3/2+\varepsilon,}\\
         B&=\left(\Tilde{I}\left(\Tilde{G}^{-1}\left(\Tilde{F}\left(\Phi\left(N\right)\right)\right)\right)\Tilde{H}\left(\log \left(\Tilde{G}^{-1}\left(\Tilde{F}\left(\Phi\left(N\right)\right)\right)\right)\right)\right)^{1/2}.
        \end{align*}

\end{thm}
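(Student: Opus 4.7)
The plan is to generalize the dyadic-block approach used in the proof of Theorem \ref{thm3}, with the cutoff points $n_j = \max\{n : \tilde{F}(\Phi(n)) < \tilde{G}(j)\}$ replacing the original arithmetic grid and with $\tilde{G},\tilde{H},\tilde{I}$ playing the roles previously occupied by polynomial growth factors. First, I would use the binary representation of integers to decompose any $j$ into dyadic blocks: writing $j = \sum_{v=0}^{r_j} 2^v b(j,v)$ with $r_j = \lfloor \log_2 j \rfloor$ and defining
\[
B_j = \left\{(i,s) : i = \sum_{v=s+1}^{r_j} 2^{v-s}b(j,v),\ b(j,s)=1,\ 0 \leq s \leq r_j\right\}
\]
yields the disjoint partition $(0,n_j] = \bigsqcup_{(i,s)\in B_j}(n_{i 2^s}, n_{(i+1)2^s}]$ with $|B_j| \leq r_j + 1$. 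Setting $F(i,s,x) = \sum_{k=n_{i 2^s}+1}^{n_{(i+1)2^s}}(f_k(x)-f_k)$ gives $\sum_{k=1}^{n_j}(f_k(x) - f_k) = \sum_{(i,s) \in B_j} F(i,s,x)$.

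Next, I would define $G(r,x) = \sum_{s=0}^{r} \sum_{i=0}^{2^{r-s}-1} F^2(i,s,x)$ and use the integral hypothesis \eqref{integral} to bound $\int_X G(r,x)\,d\mu$. Each term integrates to $O(\tilde{F}(\Phi(n_{(i+1)2^s})-\Phi(n_{i 2^s})))$, which by monotonicity of $\tilde{F}$ and the defining property of $n_{2^r}$ is at most $O(\tilde{G}(2^r))$. Combining this with the compatibility hypothesis $\tilde{F}(\Phi(n_j)) = O(\tilde{F}(\Phi(n_{j+1})))$ should yield $\int_X G(r,x)\,d\mu \leq O(r \tilde{G}(2^r))$. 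I would then consider the bad set
\[
A_r = \left\{x \in X : G(r,x) > r^{2+\varepsilon} \tilde{I}(2^r) \tilde{H}(r)\right\}
\]
and apply Chebyshev's inequality to obtain $\mu(A_r) \leq O\bigl(\tilde{G}(2^r) / (r^{1+\varepsilon} \tilde{I}(2^r) \tilde{H}(r))\bigr)$. The summability hypothesis on $\sum_r \tilde{G}(2^r)/(r^{1+\varepsilon}\tilde{I}(2^r)\tilde{H}(r))$ then ensures $\sum_r \mu(A_r) < \infty$, so by Borel-Cantelli almost every $x$ lies in at most finitely many $A_r$.

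For such $x$, Cauchy-Schwarz gives
\[
\left|\sum_{k=1}^{n_j}(f_k(x)-f_k)\right| \leq \sum_{(i,s)\in B_j}|F(i,s,x)| \leq G^{1/2}(r,x)\, |B_j|^{1/2} \leq (r+1)^{1/2}\left(r^{2+\varepsilon}\tilde{I}(2^r)\tilde{H}(r)\right)^{1/2},
\]
which becomes $O((\log j)^{3/2+\varepsilon}(\tilde{I}(j)\tilde{H}(\log j))^{1/2})$ after substituting $r = r_j + 1 \asymp \log_2 j$ and $2^r \asymp j$. To convert to a bound for arbitrary $N$, I will choose $j$ so that $n_{j-1} < N \leq n_j$, sandwich $\sum_{k=1}^N f_k(x)$ between the values at $n_{j-1}$ and $n_j$, absorb the single-term discrepancies arising from $\sum_{k=n_{j-1}+1}^{n_j} f_k$ into the $\max_k f_k$ term, and use the compatibility hypothesis to control the jump between consecutive blocks. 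Since $\tilde{G}(j-1) \leq \tilde{F}(\Phi(N)) < \tilde{G}(j)$, we obtain $j \asymp \tilde{G}^{-1}(\tilde{F}(\Phi(N)))$, producing the claimed bound \eqref{general 1.5 ineq}.

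The main obstacle will be the estimation of $\int_X G(r,x)\,d\mu$ in the second step. The integral hypothesis bounds each chunk by $\tilde{F}$ applied to a sum of $\varphi_k$'s, and this is essentially nonlinear, so summing over the $2^{r-s}$ dyadic sub-intervals does not telescope as it does in the linear case $\tilde{F}(x) = Kx$ treated in Theorem \ref{thm3}. To obtain the clean bound $O(r\tilde{G}(2^r))$, one must either exploit implicit sub-additivity or concavity of $\tilde{F}$ on the scales under consideration, or propagate the compatibility assumption through the dyadic hierarchy. Identifying the minimal functional property of $\tilde{F}$ that genuinely suffices here, and reconciling it with the generality claimed in the statement, is where the real subtlety lies.
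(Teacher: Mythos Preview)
Your approach is the paper's approach: dyadic decomposition via the index set $B_j$, the quadratic accumulator $G(r,x)$, Chebyshev plus Borel--Cantelli on the sets where $G(r,x)>r^{2+\varepsilon}\tilde I(2^r)\tilde H(r)$, then Cauchy--Schwarz against $|B_j|^{1/2}$. The paper phrases the block endpoints as $u_t(i,s)=\max\{n>0:\Phi(n)<(i+t)2^s\}$ rather than your $n_{i2^s}$, but the skeleton is identical and your version is arguably cleaner in the general setting.

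The obstacle you flag at the end is genuine, and the paper's proof does not resolve it either. It simply writes
\[
\int_X G(r,x)\,d\mu \;=\; O\!\Bigl(\sum_{\substack{0\le s\le r\\ 0\le i<2^{r-s}}}\tilde F(\Phi(i,s))\Bigr)\;\le\; O\!\bigl((r+1)\,\tilde F(\Phi(n_{2^r}))\bigr)
\]
with no argument for the second inequality. In the linear case $\tilde F(x)=Kx$ of Theorem~\ref{thm3} the inner sum over $i$ telescopes to $\Phi(n_{2^r})$, and then summing over $s$ gives the factor $r+1$; for general increasing $\tilde F$ one needs super-additivity, $\tilde F(a)+\tilde F(b)\le\tilde F(a+b)$, so that $\sum_i\tilde F(\Phi(i,s))\le\tilde F\bigl(\sum_i\Phi(i,s)\bigr)$. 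That hypothesis is not among those stated, so your caution is well placed: at the claimed level of generality, both your sketch and the paper's proof share the same gap at exactly this step. In the intended applications $\tilde F$ is linear and the issue does not arise.
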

We note that, roughly speaking, the purpose of function $
\Tilde{F}$ is to let you change the growth of the asymptotic bound in \eqref{integral}, $\Tilde{G}$ is to let you control the rate that $n_{j}$ grows, and $\Tilde{H}$ and $\Tilde{I}$ are to let you control the size of the sets
\[\left\{x \in X:\,G(r,\,x)>r^{2+\varepsilon}\Tilde{I}\left({2^{r}}\right)\Tilde{H}(r)\right\},\]
which we will apply the Borel-Cantelli Lemma to in the proof.

We note that Lemma 1.5 follows from the above by setting the functions to the following:
\begin{align*}
    \Tilde{G}(j)=j \\
    \Tilde{F}(x)=x \\
    \Tilde{H}(j)=1 \\
    \Tilde{I}(j)=j.
\end{align*}
we then note that $\Tilde{G}^{-1}(j)=j$, and putting these into \eqref{general 1.5 ineq} gives us the result of Lemma 1.5 from \cite{harman1998metric} as expected.

\begin{proof}[Proof of Theorem \ref{theorem 11 harman}]
    Define the sequence $n_{1},\,n_{2},\,\dots$ by
    \begin{equation}\label{nj defn}
        n_{j}=\max \left\{n:\,\Tilde{F}(\Phi(n))=\Tilde{F}\left(\sum_{k=1}^{n} \varphi_{k}\right) < \Tilde{G}(j)\right\}.
    \end{equation}
    We note that the $n_{j}$ need not be distinct.

    Suppose that \eqref{general 1.5 ineq} holds for $N=n_{j}$ for all $j$. Then, if $n_{r}<n<n_{r+1}$, we have that
    \[\sum_{k=1}^{n_{r}}f_{k}(x) \leq \sum_{k=1}^{n}f_{k}(x) \leq \sum_{k=1}^{n_{r+1}}f_{k}(x), \]
    while
    \[\sum_{k=1}^{n_{r}}f_{k}(x)=\sum_{k=1}^{n_{r}}f_{k}+O\left(\left(\Tilde{I}(r)\Tilde{H}(r)\right)^{1/2}\left(\log(r+2)\right)^{3/2+\varepsilon}\right),\]
    and
    \[\sum_{k=1}^{n_{r+1}}f_{k}(x)=\sum_{k=1}^{n_{r+1}}f_{k}+O\left(\left(\Tilde{I}(r+1)\Tilde{H}(r+1)\right)^{1/2}\left(\log(r+3)\right)^{3/2+\varepsilon}\right).\]

    Note that by \eqref{general fkphik}, we have that
    \begin{align*}
       \sum_{n_{r}<k\leq n_{r+1}}f_{k} &\leq \max_{k \leq n_{r+1}} f_{k}+\Phi\left(n_{r+1}\right)-\Phi\left(n_{r}+1\right)  \\
       &\leq 1+\max_{k\leq n}f_{k}.
    \end{align*}
    Combining these results then gives us \eqref{general 1.5 ineq} under the assumptions mentioned.
    
    It thus remains to establish the result for $N=n_{j}$. Following the proof of Lemma 1.5 from \cite{harman1998metric}, we express the integer $j$ in binary scale as
    \begin{equation*}
        j=\sum_{0 \leq \upsilon \leq \log_{2}j}b(j,\,\upsilon)2^{\upsilon}.
    \end{equation*}

    We then let 
    \begin{equation*}
        B(j)= \left\{(i,\,s):\, i = \sum_{\upsilon = s+1}^{r}b(j,\,\upsilon) 2^{\upsilon-s},\,b(j,\,s)=1,\,0\leq s \leq r\right\},
    \end{equation*}
    where $r=r(j)=\left[\log_{2} j\right]$. We further define 
    \begin{equation*}
        F(i,\,s,\,x)=\sum_{u_{0}<k\leq u_{1}}\left(f_{k}(x)-f_{k}\right),
    \end{equation*}
where, for $t \in \{0,\,1\}$, we define 
\begin{equation}\label{u defn}
    u_{t}=u_{t}(i,\,s)=\max \left\{n>0:\,\Phi(n)<(i+t)2^{s}\right\},
\end{equation}
with the convention that $\max \emptyset=0$. This notation splits up $\left[1,\,n_{j}\right]$ into a suitably small number of blocks; that is,
\begin{equation*}
    \left(0,\,n_{j}\right] = \bigcup_{(i,\,s)\in B(j)}\left(u_{0},\,u_{1}\right],
\end{equation*} 
with $u_{0},\,u_{1}$ given by \eqref{u defn}. For further discussion of this, see the discussion and example below \eqref{partition}.

To complete the proof and establish the result for $N=n_{j}$, it remains to demonstrate that
\begin{equation*}
    \sum_{(i,\,s)}\lvert F(i,\,s,\,x) \rvert =O(\left(\left[\log_{2}j\right]+1\right)^{3/2+\varepsilon}\Tilde{I}\left(j+1\right)^{1/2}\Tilde{H}(\left[\log_{2}j\right]+1)^{1/2}.).
\end{equation*}

Set
\begin{equation*}
    G(r,\,x)=\sum_{\substack{0 \leq s \leq r \\ i < 2^{r-s}}}F^{2}(i,\,s,\,x)
\end{equation*}
and
\begin{equation*}
    \Phi(i,\,s)=\sum_{u_{0}<k\leq u_{1}}\varphi_{k},
\end{equation*}
    with $u_{t}$ given by \eqref{u defn}.
We now note that by \eqref{int with const} and \eqref{nj defn} we have that
\begin{align*}
    \int_{X}G(r,\,x) \textrm{d}\mu &= O\left(\sum_{\substack{0 \leq s \leq r \\ 0 \leq i <2^{r-s}}}\Tilde{F}\left(\Phi\left(i,\,s\right)\right)\right) \\
    &\leq O\left((r+1)\Tilde{F}\left(\Phi(n_{2^{r}})\right)\right) \\
    &\leq O\left(\left(r+1\right)\Tilde{G}\left(2^{r}\right)\right).
\end{align*}
It follows that
\begin{equation*}
    \mu \left\{x \in X:\,G(r,\,x)>r^{2+\varepsilon}\Tilde{I}\left({2^{r}}\right)\Tilde{H}(r)\right\}<O\left(\frac{\Tilde{G}\left(2^{r}\right)}{r^{1+\varepsilon}\Tilde{I}(2^{r})\Tilde{H}(r)}\right).
\end{equation*}
    By assumption, $\sum_{r=1}^{\infty}\frac{\Tilde{G}\left(2^{r}\right)}{r^{1+\varepsilon}\Tilde{I}(2^{r})\Tilde{H}(r)}$ converges. Thus, by the Borel-Canteli lemma, for almost all $x \in X$ we have that
    \begin{equation}\label{bounds on G}
        G(r,\,x) < r^{2+\varepsilon}\Tilde{I}(2^{r})\Tilde{H}(r),
    \end{equation}
for $r> r(x)$.

We now let $r=\left[\log_{2}j\right]+1$, and suppose $x$ belongs to the set for which \eqref{bounds on G} holds. Recall that $\lvert B(j)\rvert \leq r$. An application of the Cauchy-Schwarz inequality gives us that
\begin{align*}
    \sum_{(i,\,s) \in B(j)}\lvert F(i,\,s,\,x)\rvert &\leq \lvert B(j)\rvert^{1/2}G^{1/2}(r,\,x) \\
    &\leq r^{1/2}\left(r^{2+\varepsilon}\Tilde{I}(2^{r})\Tilde{H}(r)\right)^{1/2} \\
    &\leq r^{1/2}r^{1+\varepsilon}\left(\Tilde{I}(2^{r})\Tilde{H}(r)\right)^{1/2} \\
    &= r^{3/2+\varepsilon}\left(\Tilde{I}(2^{r})\Tilde{H}(r)\right)^{1/2}\\
    &\leq \left(\left[\log_{2}j\right]+1\right)^{3/2+\varepsilon}\Tilde{I}\left(j+1\right)^{1/2}\Tilde{H}(\left[\log_{2}j\right]+1)^{1/2}.
\end{align*}

Now we note that by \eqref{nj defn},
\[\Tilde{F}\left(\Phi(n_{j})\right) < \Tilde{G}(j)\leq \Tilde{F}\left(\Phi(n_{j+1})\right).\]

As $\Tilde{G}(j)$ and $\Tilde{F}(j)$ are strictly increasing, it follows that
\[j \leq \Tilde{G}^{-1}\left(\Tilde{F}\left(\Phi\left(n_{j+1}\right)\right)\right).\]

Now, as by assumption $\Tilde{F}(\Phi(n_{j}) = O \Tilde{F}(\Phi(n_{j+1})$, for $N=n_{j}$, we have established that
    \begin{equation*}
        \sum_{k=1}^{N}f_{k}(x)=\sum_{k=1}^{N}f_{k}+O\left(A'B'\right),
    \end{equation*}
    where
    \begin{align*}
        A'&=\left(\log \left( \Tilde{G}^{-1}\left(\Tilde{F}\left(\Phi\left(n_{j}\right)\right)\right)\right)+2\right)^{3/2+\varepsilon},\\
        B'&=\Tilde{I}\left(\Tilde{G}^{-1}\left(\Tilde{F}\left(\Phi\left(n_{j}\right)\right)\right)\right)^{1/2}\Tilde{H}\left(\log \left(\Tilde{G}^{-1}\left(\Tilde{F}\left(\Phi\left(n_{j}\right)\right)\right)\right)\right)^{1/2},
    \end{align*}
    and establishing this completes the proof.
\end{proof}

\end{document}